\documentclass[review,a4paper]{amsart}
\usepackage[T1]{fontenc}
\usepackage[left=3cm,right=3cm,top=3cm,bottom=3cm]{geometry}
\usepackage{mathtools,bbm,amsthm,amsmath,amsfonts}
\usepackage{amssymb,color,epic,multirow,comment}
\usepackage{algorithmicx,algorithm,pifont}
\usepackage{graphicx}
\usepackage{subcaption}
\usepackage[export]{adjustbox}
\usepackage{tikz,pgfplots}
\usepackage{pgfplotstable}
\usepackage{orcidlink}
\usepackage{mathrsfs}
\usepackage{cite}
\usepgfplotslibrary{groupplots}
\pgfplotsset{compat=newest}
\usetikzlibrary{arrows,decorations,backgrounds,positioning}
\usepgfplotslibrary{colorbrewer}
\mathtoolsset{showonlyrefs}
\usepackage{lineno}
\usepackage{setspace}
\theoremstyle{plain}

\newtheorem{theorem}{Theorem}[section]
\newtheorem{lemma}[theorem]{Lemma}
\newtheorem{proposition}[theorem]{Proposition}
\newtheorem{corollary}[theorem]{Corollary}
\newtheorem{definition}[theorem]{Definition}
\newtheorem{remark}[theorem]{Remark}

\def\Letters{A,B,C,D,E,F,G,H,I,J,K,L,M,N,O,P,Q,R,S,T,U,V,W,X,Y,Z}
\makeatletter
\@for \@l:=\Letters \do{%
  \expandafter\edef\csname\@l bb\endcsname{%
  \noexpand\ensuremath{\noexpand\mathbb{\@l}}}%
  \expandafter\edef\csname\@l bf\endcsname{{\noexpand\bf \@l}}%
  \expandafter\edef\csname\@l cal\endcsname{%
  \noexpand\ensuremath{\noexpand\mathcal{\@l}}}%
  \expandafter\edef\csname\@l eu\endcsname{%
  \noexpand\ensuremath{\noexpand\EuScript{\@l}}}%
  \expandafter\edef\csname\@l frak\endcsname{%
  \noexpand\ensuremath{\noexpand\mathcal{\@l}}}%
  \expandafter\edef\csname\@l rm\endcsname{{\noexpand\rm \@l}}%
  \expandafter\edef\csname\@l scr\endcsname{%
  \noexpand\ensuremath{\noexpand\mathscr{\@l}}}%
}
\newcommand{\bs}[1]{{\boldsymbol#1}}
\renewcommand{\d}{\operatorname{d\!}}

\newcommand{\gfrac}[2]{\genfrac{}{}{0pt}{}{#1}{#2}}

\newcommand{\isdef}{\mathrel{\mathrel{\mathop:}=}}

\newcommand{\spn}{{\operatorname{span}}}

\definecolor{navy}{RGB}{102,153,255}
\definecolor{tuerkis}{RGB}{51,153,204}
\algrenewcommand\alglinenumber[1]{\ding{\taumexpr191 + #1}}

\title[Samplet limits and multiwavelets]
{Samplet limits and multiwavelets}

\author[G. Giacchi]{Gianluca Giacchi \orcidlink{0000-0002-6809-1311}}
\author[M. Multerer]{Michael Multerer \orcidlink{0000-0003-0170-0239}}
\author[J. Quizi]{Jacopo Quizi \orcidlink{0009-0001-9199-2812}}

\address{IDSIA USI-SUPSI,
Universit{\`a} della Svizzera italiana,
Via la Santa 1, 6962 Lugano, Switzerland}
\email{
  \href{mailto:gianluca.giacchi@usi.ch}{gianluca.giacchi@usi.ch},
  \href{mailto:michael.multerer@usi.ch}{michael.multerer@usi.ch},
  \href{mailto:jacopo.quizi@usi.ch}{jacopo.quizi@usi.ch}
}

\begin{document}
\begin{abstract}
Samplets are data adapted multiresolution analyses of localized discrete signed
measures. They can be constructed on scattered data sites in arbitrary dimension
such that they exhibit vanishing moments with respect to any prescribed set
of primitives. We consider the samplet construction in a probabilistic framework
and show that, if choosing polynomials as primitives, the resulting samplet
basis converges to signed measures with broken polynomial densities in the
infinite data limit. These densities amount to multiwavelets with respect to a
hierarchical partition of the region containing the data sites. As a byproduct,
we therefore obtain a construction of general multiwavelets that allows
for a flexible prescription of vanishing moments going beyond
tensor product constructions. For congruent partitions we particularly
recover classical multiwavelets with scale- and partition- independent filter 
coefficients. The theoretical findings are complemented by numerical experiments 
that illustrate the convergence results in case of random as well as
low-discrepancy data sites.
\end{abstract}
\keywords{Samplets, multiwavelets, multiresolution analysis, 
discrete orthogonal polynomials}

\maketitle
\section{Introduction}\label{sec:intro}
Multiresolution analyses are a central tool in approximation, compression and 
fast computation. In many settings, one is interested in an orthonormal,
or at least stable, basis that is spatially localized and ordered by scale,
such that smooth components of a given signal are represented on coarse scales, 
while fine scales capture details. In the univariate setting and in
tensor-product regimes, wavelet bases are the canonical choice for this purpose. 
In many applications, however, data are scattered, high-dimensional
and supported on irregular domains. In such settings, tensor-product constructions
are often inadequate, as they suffer from the curse of dimensionality 
and do not adapt well to non-uniform sampling or irregular geometry. 
This motivates the search for data-adapted multiresolution constructions.

Among classical multiresolution techniques, compactly supported orthogonal
wavelets and polynomial multiwavelets are particularly relevant to the present
work. These constructions yield sparse approximations and operator compression
and play an important role in numerical analysis~\cite{B92,BCR91,D97,DV98},
image and signal processing~\cite{C03,RV91} and the fast solution of partial
differential and integral equations~\cite{D92,M08,M93}. A particularly relevant
example is provided by Alpert's polynomial multiwavelets~\cite{A93}, which extend
the seminal ideas introduced in~\cite{ABCR93}. 
These multiwavelets are locally supported, \(L^2\)-orthonormal piecewise polynomials,
see also \cite{GIvA17},
with prescribed vanishing moments and admit smooth duals, see \cite{DDLY00}.
Multivariate extensions of these techniques are typically obtained by
tensorization. 
A data-driven alternative is provided by
samplets, see~\cite{HM22,balazs2024construction,EGMQ25}. 
Samplets form a multiresolution analysis of localized discrete signed measures,
which are tailored to the underlying scattered data set
and have been used for efficient scattered data approximation and compression.
In this sense, samplets are a natural extension of the basis introduced 
in \cite{ABCR93}.
To date, however, there is no continuous-limit theory for samplets in the 
infinite data limit.

The purpose of this work is to develop such a continuous-limit theory.
To this end, we adopt a probabilistic framework in which the region containing
the scattered data sites is endowed with a probability measure and the data sites
are viewed as independent samples distributed according to this measure.
Building on the Banach frame construction of~\cite{balazs2024construction},
we interpret samplets as discretizations of discrete broken orthonormal
polynomials associated with empirical measures on a hierarchical partition.
A limit theory for samplets is then obtained by the convergence
of discrete orthonormal polynomials to their continuous counterparts.
As a byproduct, we obtain a family of non-tensorial polynomial multiwavelets.
From this perspective, samplets converge in the distributional sense to signed
measures with broken polynomial densities. In particular,
our results explicate the limiting procedure from the Nystr\"om approximation
in~\cite{ABCR93} to the multiwavelet basis in~\cite{A93} for the univariate setting.

The main contributions of the present work can be summarized as follows.
\begin{itemize}
  \item We prove the convergence of local Gram matrices and of discrete 
orthonormal polynomial families to their continuous counterparts. 

\item We show that the samplet construction is compatible with this limit 
and converges to multiwavelets defined on a hierarchical partition of the
underlying region.

\item We extend the construction beyond total-degree polynomial spaces. 
By considering downward closed index sets, we obtain, for example, 
anisotropic vanishing moments, which are suitable for high-dimensional regimes.

\item We prove that, in the special case of symmetric binary splits of 
the unit hypercube together with uniform sampling, the limiting procedure 
under consideration recovers Alpert-type multiwavelets.

\item We provide illustrative numerical studies with very large sample sizes 
  to confirm the theoretical statements.
\end{itemize}

The remainder of this article is structured as follows. 
In Section~\ref{sec:preliminaries}, we introduce the probabilistic framework and 
the fundamentals on orthogonal polynomials, empirical Gram matrices and their 
asymptotic properties.
In Section~\ref{sec:samplets}, we review the construction of samplets and 
introduce a suitable functional analytic framework.
In Section~\ref{sec:orth_pol_limits}, we develop the asymptotic analysis of 
samplets. Specifically, we study the convergence of local discrete 
orthonormal polynomials to their continuous counterparts. Further, 
we discuss the extension to downward closed index sets
and establish \(L^2\)-completeness of the limit basis.
In Section~\ref{sec:alpert_parity_complete}, we show that Alpert multiwavelets 
can be retrieved from our construction, when imposing binary partitioning 
of the unit hypercube.
The numerical experiments are collected in Section~\ref{sec:numerics},
whereas Section~\ref{sec:conclusion} states concluding remarks.

\section{Preliminaries}\label{sec:preliminaries}

\subsection{Probabilistic setting}\label{subs:prob_sett}
Let \(D\subset\Rbb^d\) be a compact set and let \(\Fcal=\Bcal(D)\) be the
Borel \(\sigma\)-algebra on \(D\). We assume that \(\Pbb\colon\Fcal\to[0,1]\)
is a probability measure that is absolutely continuous with respect to the
Lebesgue measure and has a positive Radon-Nikodym derivative. Given
independent samples \({\bs x}_1,\ldots,{\bs x}_N\in D
\) drawn with law \(\Pbb\),
we write
\[
X_N\isdef\{{\bs x}_1,\ldots,{\bs x}_N\}\subset D
\]
for the corresponding point set.

Associated to \(X_N\), we introduce the \emph{empirical measure}
\begin{equation}\label{eq:emp_meas}
	\widehat{\Pbb}_N\isdef\frac{1}{N}\sum_{i=1}^N\delta_{{\bs x}_i},
\end{equation}
where \(\delta_{\bs x}\) denotes the Dirac measure supported at
\({\bs x}\in D\). We remark that the empirical measure~\eqref{eq:emp_meas}
is a random variable due to the random selection of the set \(X_N\). 

The following classical result is known as Varadarajan's theorem,
see~\cite{D02,V58}, and guarantees the convergence of the empirical measure
towards \(\Pbb\). In what follows, we denote by \(\Ccal(D)\) the space of
continuous, and thus bounded, functions on \(D\).
\begin{theorem}\label{Varadarajan}
	The empirical measure \(\widehat{\Pbb}_N\), defined in~\eqref{eq:emp_meas},
	converges \(\Pbb\)-almost surely to \(\Pbb\), i.e.,
	\[
	\int_D f\d\widehat{\Pbb}_N\xrightarrow{N\to\infty}
	\int_D f\d\Pbb
	\quad\text{for all }f\in \Ccal(D), \ \Pbb\text{-almost surely}.
	\]
\end{theorem}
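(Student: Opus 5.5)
The plan is to combine the strong law of large numbers for one fixed test function with a separability argument, so that a single null set works for every \(f\in\Ccal(D)\) simultaneously.

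First, fix \(f\in\Ccal(D)\). Since \(D\) is compact, \(f\) is bounded, so \(f({\bs x}_1),f({\bs x}_2),\ldots\) are i.i.d. integrable real random variables with mean \(\int_D f\d\Pbb\). We regard the samples as the first \(N\) terms of one infinite i.i.d. sequence, so that almost sure convergence in \(N\) makes sense. Kolmogorov's strong law of large numbers then gives
\[
\int_D f\d\widehat{\Pbb}_N=\frac1N\sum_{i=1}^N f({\bs x}_i)\xrightarrow{N\to\infty}\int_D f\d\Pbb
\]
outside a null set \(\Omega_f\). The difficulty is that \(\Omega_f\) depends on \(f\), and there are uncountably many \(f\).

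Second, we use that \(\Ccal(D)\) with the sup-norm is separable because \(D\) is a compact metric space. For instance, polynomials with rational coefficients are dense by Stone--Weierstrass. Let \(\{f_k\}_{k\in\Nbb}\) be a countable dense subset and set \(\Omega_0\isdef\bigcup_k\Omega_{f_k}\). This is a countable union of null sets, hence null. Outside \(\Omega_0\), convergence holds for every \(f_k\) at once.

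Third, we extend to arbitrary \(f\in\Ccal(D)\) by a \(3\varepsilon\)-argument. Both \(\widehat{\Pbb}_N\) and \(\Pbb\) are probability measures, so \(|\int g\d\mu|\le\|g\|_\infty\) for either of them. Given \(\varepsilon>0\), choose \(k\) with \(\|f-f_k\|_\infty<\varepsilon\). Then
\[
\Bigl|\int f\d\widehat{\Pbb}_N-\int f\d\Pbb\Bigr|\le 2\varepsilon+\Bigl|\int f_k\d\widehat{\Pbb}_N-\int f_k\d\Pbb\Bigr|.
\]
Outside \(\Omega_0\) the last term tends to zero, so the \(\limsup\) of the left-hand side is at most \(2\varepsilon\). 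Since \(\varepsilon\) is arbitrary, the limit is zero on the complement of \(\Omega_0\), uniformly in the choice of \(f\). This proves the theorem.

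The only real obstacle is the exchange of ``for all \(f\)'' and ``almost surely'', and separability together with the uniform bound by total mass handles it. Note that neither absolute continuity nor positivity of the density of \(\Pbb\) is needed here.
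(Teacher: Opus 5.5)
The paper gives no proof of its own here and only cites Varadarajan's theorem via Dudley; the proof behind that citation is exactly your argument (strong law for a countable sup-norm dense family, then a $3\varepsilon$-approximation using that both measures have total mass one), so your proposal is correct and follows essentially the standard route. One wording point: what is uniform in $f$ is the exceptional null set $\Omega_0$, not the convergence itself, which is not uniform over the unit ball of $\Ccal(D)$ for the atomless $\Pbb$ considered here.
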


Associated with \(\Pbb\), we denote the Hilbert space
of equivalence classes of real-valued \(\Pbb\)-mea\-sur\-able and 
square-integrable functions by \(L^2(D,\Pbb)\).
Its inner product is given by
\[
(f,g)_{\Pbb} \isdef \int_{D} fg\d\Pbb,
\quad f,g\in L^2(D,\Pbb).
\]
The corresponding \(L^2\)-norm is \(\|f\|_\Pbb\isdef (f,f)_\Pbb^{1/2}\).
For a given realization \(X_N\), we
consider the \emph{empirical inner product}
\begin{equation}\label{defMuX}
	(f,g)_{\widehat{\Pbb}_N}\isdef\int_{D} fg\d\widehat{\Pbb}_N
	= \frac{1}{N}\sum_{i=1}^N f(\bs x_i)g(\bs x_i)
\end{equation}
and the associated space \(L^2(D,\widehat{\Pbb}_N)\) with norm 
\(\|f\|_{\widehat{\Pbb}_N}
\isdef (f,f)_{\widehat{\Pbb}_N}^{1/2}\).

\subsection{Orthogonal polynomials}\label{subsec:orth_pol}
We recall the notions of orthogonal and orthonormal polynomials in the
univariate setting as well as in the multivariate setting.
For all the details, we refer to~\cite{A98,C78,S67}.

We start with the univariate case, where \(D\subset\Rbb\) is a compact interval.
Then, all moments of \(\Pbb\) are finite, i.e.,
\[
\int_D x^q \d\Pbb<\infty\quad\text{for every }q\in\mathbb{N}_0.
\]
A sequence \(\{\pi_n\}_{n\in\Nbb_0}\) of polynomials is \emph{orthogonal} on
\(D\) with respect to \(\Pbb\) if
\[
(\pi_i,\pi_j)_{\Pbb}=0 \quad\text{for } i\neq j.
\]
The sequence \(\{\pi_i\}_{i\ge0}\) is \emph{orthonormal} if
additionally \(\|\pi_i\|_{\Pbb}=1\).
The sequence of orthonormal polynomials with respect to the
\((\cdot,\cdot)_{\Pbb}\)-inner product is uniquely determined
up to a sign, which can be fixed by considering monic
orthogonal polynomials.

Clearly, given the sequence of orthogonal polynomials, the corresponding
orthonormal ones can be obtained by normalization according to
\begin{equation}\label{eq:orthonormalization}
	\widehat{\pi}_i = \frac{\pi_i}{\|\pi_i\|_\Pbb}, \quad
	(\widehat{\pi}_i,\widehat{\pi}_j)_{\Pbb} = \delta_{i,j}.
\end{equation}

In the multivariate case, the orthogonal polynomials do not have 
a unique ordering with respect to their degree. Therefore, we impose
the graded lexicographic order on multi-indices and remark that other orderings
are possible.
For \(\bs\alpha,\bs\beta\in\mathbb{N}_0^d\), we write \(\bs\alpha<\bs\beta\) if
\(|\bs\alpha|<|\bs\beta|\), or if \(|\bs\alpha|=|\bs\beta|\) and there exists
\(1\le j\le d\) such that \(\alpha_i=\beta_i\) for all \(i>j\) and
\(\alpha_j<\beta_j\).

For a given multi-index
\(\bs\alpha=(\alpha_1,\ldots,\alpha_d)\in\Nbb_0^d\) with modulus
\(|\bs\alpha|\isdef\sum_{i=1}^d\alpha_i\), and
\(\bs x=[x_1,\ldots,x_d]^\intercal\in\Rbb^d\), we set
\[
\bs x^{\bs\alpha}\isdef x_1^{\alpha_1}\cdots x_d^{\alpha_d}
\]
and define the space of polynomials of total degree at most \(k\) by
\[
\Pcal_k\isdef
\operatorname{span}\{\bs x^{\bs\alpha} : \bs\alpha\in\Lambda_k\}.
\]
Therein, we denote by
\[
\Lambda_k\isdef \{\bs\alpha\in\Nbb_0^d:\ |\bs\alpha|\le k\}
\]
the set of multi-indices of modulus at most \(k\).
It is well known that the dimension of \(\Pcal_k\) is given by the
binomial coefficient
\[
\dim\Pcal_k = \binom{k+d}{d}.
\]
Correspondingly, there holds \(|\Lambda_k|=\dim\Pcal_k\).
We always consider the elements of \(\Lambda_k\) to be enumerated with 
respect to the graded
lexicographic order such that
\[
{\bs\alpha}_1<\ldots<{\bs\alpha}_{|\Lambda_k|}.
\]
Then, the associated monomials are ordered according to
\[
1=\bs x^{{\bs\alpha}_1},\ldots,\bs x^{{\bs\alpha}_{|\Lambda_k|}}=x_d^k.
\]

Analogously to the univariate case, we introduce families of orthogonal
polynomials. A family of polynomials
\(\{\pi_{\bs\alpha}\}_{\bs\alpha\in\Lambda_k}\)
is \emph{orthogonal} with respect to \(\Pbb\) if
\[
(\pi_{\bs\alpha},\pi_{\bs\beta})_{\Pbb} = 0
\quad\text{for } \bs\alpha,\bs\beta\in\Lambda_k,\ \bs\alpha\neq\bs\beta,
\]
and it is \emph{orthonormal} if, additionally,
\[
(\pi_{\bs\alpha},\pi_{\bs\alpha})_{\Pbb}=1
\quad\text{for all }\bs\alpha\in\Lambda_k.
\]
Multivariate orthogonal polynomials are obtained by applying the
Gram-Schmidt process to the monomial basis
\(\{\bs x^{{\bs\alpha}_i}\}_{i=1}^{|\Lambda_k|}\).
Then, the order of the monomials is inherited by the resulting
orthogonal family \(\{\pi_{{\bs\alpha}_i}\}_{i=1}^{|\Lambda_k|}\), which we
again consider to be monic. Thus, for each
\(i=1,\ldots,|\Lambda_k|\), we have
\[
\pi_{{\bs\alpha}_i}(\bs x)
=
\bs x^{{\bs\alpha}_i}
+\sum_{j=1}^{i-1}\ell_{i,j}\bs x^{{\bs\alpha}_j}.
\]
In particular, the Gram-Schmidt process yields a triangular change of basis from
monomials to orthogonal polynomials and an explicit representation in terms of
the Cholesky factorization of the corresponding inverse Gram matrix.
\subsection{Convergence of empirical Gram matrices}
\label{subsec:gram_moment}
We introduce the Gram matrices associated with the monomial
basis, both with respect to the measure \(\Pbb\) and with respect to the
empirical measure \(\widehat{\Pbb}_N\). These matrices encode the information
needed for the construction of the discrete orthogonal polynomials and for our
convergence analysis below. We start by collecting corresponding
non-degeneracy and convergence properties.

The monomial basis \(\{\bs x^{{\bs \alpha}_i}\}_{i=1}^{|\Lambda_k|}\) can be
orthogonalized by the Gram-Schmidt process either with respect to the
\((\cdot,\cdot)_{\widehat{\Pbb}_N}\)-inner product or with respect to the
\((\cdot,\cdot)_\Pbb\)-inner product.
As \(N\to\infty\), each element of the first family converges to the
corresponding element of the second one. Precisely, the family of discrete
monic orthogonal polynomials
\(\{\pi_{{\bs\alpha}_i}^{(N)}\}_{i=1}^{|\Lambda_k|}\), characterized by
\[
\big(\pi_{{\bs\alpha}_i}^{(N)},
\pi_{{\bs\alpha}_j}^{(N)}\big)_{\widehat{\Pbb}_N}=0
\quad\text{for } i\neq j,
\]
converges to the family of monic orthogonal polynomials 
\(\{\pi_{{\bs\alpha}_i}\}_{i=1}^{|\Lambda_k|}\),
which are characterized by
\[
(\pi_{{\bs\alpha}_i},\pi_{{\bs\alpha}_j})_\Pbb=0
\quad\text{for } i\neq j.
\]
Our proof is based on the associated Gram matrices. We
set
\[
  {\bs G}_k \isdef[g_{i,j}]_{i,j=1}^{|\Lambda_k|}
\isdef
[({\bs x}^{{\bs \alpha}_i},
{\bs x}^{{\bs \alpha}_j})_{\Pbb}]_{i,j=1}^{|\Lambda_k|}
\in\Rbb^{|\Lambda_k|\times |\Lambda_k|},
\]
as well as
\begin{equation}\label{defGkN}
  {\bs G}_k^{(N)} \isdef\big[g_{i,j}^{(N)}\big]_{i,j=1}^{|\Lambda_k|}
  \isdef
[({\bs x}^{{\bs \alpha}_i},
{\bs x}^{{\bs \alpha}_j})_{\widehat{\Pbb}_N}]_{i,j=1}^{|\Lambda_k|}
\in\Rbb^{|\Lambda_k|\times |\Lambda_k|}.
\end{equation}

\begin{remark}\label{Rem:positive_emp_gr}
	The matrices \({\bs G}_k\) are positive definite for any \(k\in\Nbb_0\).
	There holds for any \(\bs v\in \Rbb^{|\Lambda_k|}\) that
	\begin{align*}
		\bs v^\intercal{\bs G}_k\bs v
		&=\sum_{i,j=1}^{|\Lambda_k|}v_iv_j(
		\bs x^{{\bs\alpha}_i},\bs x^{{\bs\alpha}_j})_{\Pbb} 
		=\bigg(\sum_{i=1}^{|\Lambda_k|}v_i\bs x^{\bs\alpha_i},
		\sum_{j=1}^{|\Lambda_k|}v_j\bs x^{\bs\alpha_j}\bigg)_{\Pbb}\\
		&=\bigg\|\sum_{i=1}^{|\Lambda_k|}
    v_i\bs x^{\bs\alpha_i}\bigg\|_{L^2(D,\Pbb)}^2\ge 0,
	\end{align*}
    and equality holds if and only if \(\bs v=0\). Indeed, if
    \(\sum_{i=1}^{|\Lambda_k|}v_i\bs x^{\bs\alpha_i}=0\) in \(L^2(D,\Pbb)\),
    then this
    polynomial needs to vanish \(\Pbb\)-almost everywhere. Since \(\Pbb\) is assumed
    to have a positive density on \(D\), it vanishes on a set of positive
    Lebesgue measure and must thus
    be the zero polynomial. Consequently, all coefficients \(v_i\) vanish.
\end{remark}

For the Gram-Schmidt process of the discrete orthogonal polynomials, we require
the empirical Gram matrix \(\bs G_k^{(N)}\) to be positive definite. In general,
this property may fail for arbitrary point sets, in accordance with the
Mairhuber-Curtis theorem, see, e.g., \cite{W05}. However, for random samples
drawn from \(\Pbb\), the empirical Gram matrix is positive definite almost
surely if \(N\ge |\Lambda_k|\).
To establish this fact, we use the following lemma, which is essentially a
restatement of~\cite[Lemma 2.8]{W05}.
\begin{lemma}\label{lemmaWendland}
	Let \(k>0\) be an integer and \(N\ge |\Lambda_k|\). Then, there exists a
  unisolvent
	set of \(N\) points in \(D\), i.e., there exists a set
	\(X_N\isdef\{{\bs x}_1,\ldots,{\bs x}_N\}\subset D\) such that
	\(p|_{X_N}=0\) implies \(p=0\) for each \(p\in\Pcal_k\).
\end{lemma}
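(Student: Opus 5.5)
We need to prove: for \(N\ge|\Lambda_k|\) there exist \(N\) points in \(D\) such that the only \(p\in\Pcal_k\) vanishing on them is \(p=0\). Set \(m\isdef|\Lambda_k|=\dim\Pcal_k\). It suffices to construct a unisolvent set of exactly \(m\) points, because adding further points can only shrink the space of polynomials vanishing on the set. We implicitly need \(D\) to contain a nonempty open set; this holds in the paper's setting, since \(\Pbb\) has a positive Lebesgue density and hence \(D\) has positive Lebesgue measure. Remark~\ref{Rem:positive_emp_gr} uses the same fact: a nonzero polynomial cannot vanish on a set of positive Lebesgue measure.

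The plan is to choose the points inductively so that a Vandermonde-type determinant stays nonzero. Fix the monomial basis \(\bs x^{\bs\alpha_1},\ldots,\bs x^{\bs\alpha_m}\). For \(j=1,\ldots,m\), choose \(\bs x_j\in D\) such that the \(j\times j\) matrix
\[
\bs V_j\isdef[\bs x_r^{\bs\alpha_s}]_{r,s=1}^j
\]
is nonsingular.

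For \(j=1\), the entry is \(\bs x_1^{\bs\alpha_1}=1\), so any point works. For the inductive step, assume \(\bs V_{j-1}\) is nonsingular. Expand \(\det\bs V_j\) along its last row, regarding \(\bs x_j=\bs x\) as a variable. This gives a polynomial
\[
q(\bs x)=\sum_{s=1}^j c_s\bs x^{\bs\alpha_s}.
\]
Its coefficient \(c_j\) equals \(\pm\det\bs V_{j-1}\ne0\), so \(q\) is a nonzero polynomial. A nonzero polynomial cannot vanish on all of \(D\), because \(D\) has positive Lebesgue measure and the zero set of a nonzero polynomial is Lebesgue-null. Hence some \(\bs x_j\in D\) satisfies \(q(\bs x_j)\ne0\), and choosing it makes \(\bs V_j\) nonsingular.

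After \(m\) steps, \(\bs V_m\) is nonsingular. Now let \(p=\sum_s v_s\bs x^{\bs\alpha_s}\) vanish on \(\{\bs x_1,\ldots,\bs x_m\}\). Then \(\bs V_m\bs v=0\), so \(\bs v=0\) and \(p=0\). For \(N>m\), choose the remaining points arbitrarily but distinct in \(D\), which is possible because \(D\) is infinite. Unisolvence persists by the monotonicity noted at the start.

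The only genuinely delicate step is the claim that a nonzero polynomial has a Lebesgue-null zero set. I would prove it by induction on \(d\) using Fubini. In one variable, a nonzero polynomial has finitely many roots. In \(d\) variables, write \(p\) as a polynomial in \(x_d\) whose coefficients are polynomials in \(x_1,\ldots,x_{d-1}\), at least one of them nonzero. Outside the null set where that coefficient vanishes, each slice in \(x_d\) contains only finitely many zeros. Alternatively, one can simply cite the standard fact as in Remark~\ref{Rem:positive_emp_gr}.

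This construction also shows something slightly stronger, which is what the paper presumably uses next. Since \(\det\bs V_m\) is a nonzero polynomial in \((\bs x_1,\ldots,\bs x_m)\), almost every choice of points with respect to \(\Pbb^{\otimes m}\) is unisolvent. This is what yields almost sure positive definiteness of \(\bs G_k^{(N)}\).
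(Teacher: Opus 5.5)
Your proof is correct. The paper gives no argument of its own here; it simply defers to \cite[Lemma 2.8]{W05}, so your proof is genuinely different in that it is self-contained.

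Your construction chooses the points one at a time so that the leading minor \(\bs V_j\) stays nonsingular, by expanding \(\det\bs V_j\) along its last row. This is a constructive form of the standard linear-algebra argument: the point evaluations \(\delta_{\bs x}\), \(\bs x\in D\), span the dual of \(\Pcal_k\) because no nonzero polynomial vanishes on \(D\), so some \(|\Lambda_k|\) of them form a basis. The citation is shorter, but it leaves the reader to check that the cited hypotheses hold. Unisolvence results of this type are commonly stated for sets with an interior point, and a compact \(D\) carrying a probability measure absolutely continuous with respect to \(\lambda_d\) need not have one. Your argument uses only \(\lambda_d(D)>0\), which follows immediately from the standing assumption on \(\Pbb\), so it covers exactly the paper's setting. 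It also uses the same fact the paper invokes in Remark~\ref{Rem:positive_emp_gr}. Your closing observation is precisely the content of Lemma~\ref{ass:nondeg}: \(\det\bs V_{|\Lambda_k|}\) is a nonzero polynomial in the points, so almost every configuration is unisolvent.

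One sentence should be removed. Positive Lebesgue measure does not imply that \(D\) contains a nonempty open set; a compact fat Cantor set is a counterexample. Your proof never uses an open set, only that the zero set of a nonzero polynomial is Lebesgue-null, so you can simply drop that claim.
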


We have the following result on the positive definiteness of the
empirical Gram matrix \({\bs G}_k^{(N)}\).

\begin{lemma}\label{ass:nondeg}
	Let \({\bs G}_k^{(N)}\) be the empirical Gram matrix corresponding
	to the realization \(X_N\) and assume
  \(N\ge |\Lambda_k|\).
	Then, there holds
	\[
	\Pbb^{\otimes N}\big(\det{\bs G}_{k}^{(N)}=0\big)=0.
	\]
\end{lemma}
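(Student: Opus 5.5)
We view \(\det{\bs G}_k^{(N)}\) as a polynomial in the \(dN\) coordinates of \(({\bs x}_1,\ldots,{\bs x}_N)\in D^N\), show that it is not identically zero, and then use that the zero set of a nontrivial polynomial is Lebesgue-null. Absolute continuity of \(\Pbb^{\otimes N}\) with respect to Lebesgue measure on \(D^N\) then gives the claim.

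First, we write \({\bs G}_k^{(N)}=\frac1N{\bs V}^\intercal{\bs V}\). Here \({\bs V}\in\Rbb^{N\times|\Lambda_k|}\) is the Vandermonde-type matrix with entries \(v_{i,j}={\bs x}_i^{{\bs\alpha}_j}\). Then \(\det{\bs G}_k^{(N)}=0\) if and only if \({\bs V}\) fails to have full column rank. This happens precisely when some nonzero \(p\in\Pcal_k\) satisfies \(p|_{X_N}=0\), i.e., when \(X_N\) is not unisolvent. By the Cauchy--Binet formula,
\[
\det{\bs G}_k^{(N)}=N^{-|\Lambda_k|}\sum_{S}\big(\det{\bs V}_S\big)^2,
\]
where \(S\) runs over all \(|\Lambda_k|\)-element row subsets. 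Hence \(F({\bs x}_1,\ldots,{\bs x}_N)\isdef\det{\bs G}_k^{(N)}\) is a polynomial in the coordinates of the points.

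Second, Lemma~\ref{lemmaWendland} provides a unisolvent set of \(N\) points in \(D\). At this configuration \({\bs V}\) has full column rank, so \(F\neq0\) there and \(F\) is not the zero polynomial on \(\Rbb^{dN}\).

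Third, we use the standard fact that the zero set of a nonzero polynomial on \(\Rbb^{m}\) has Lebesgue measure zero. This follows by induction on \(m\) with Fubini's theorem. For \(m=1\) a nonzero polynomial has finitely many roots. In general we write \(F\) as a polynomial in the last variable with coefficients that are polynomials in the remaining ones. Some coefficient is nonzero, and outside its null zero set the slice has finitely many roots. Thus \(\{F=0\}\cap D^N\) is a Lebesgue null set in \(\Rbb^{dN}\).

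Finally, \(\Pbb\) has a density with respect to Lebesgue measure, so \(\Pbb^{\otimes N}\) has the product density on \(D^N\). It is therefore absolutely continuous with respect to Lebesgue measure on \(\Rbb^{dN}\). This yields \(\Pbb^{\otimes N}(\det{\bs G}_k^{(N)}=0)=0\).

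The main obstacle is the non-triviality of \(F\). This is exactly where Lemma~\ref{lemmaWendland} is needed, since a priori the determinant could vanish identically, as in Mairhuber--Curtis-type situations. Note also that the points in Lemma~\ref{lemmaWendland} need to lie in \(D\) only for the argument to stay within \(D^N\). Non-triviality as a polynomial on all of \(\Rbb^{dN}\) is all the null-set argument requires, so even a unisolvent set outside \(D\) would suffice.
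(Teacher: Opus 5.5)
Your proof is correct and follows the same route as the paper. You write $\det\bs G_k^{(N)}$ as a polynomial in the point coordinates, show it is nontrivial via Lemma~\ref{lemmaWendland}, use that its zero set is Lebesgue-null, and conclude by absolute continuity of $\Pbb^{\otimes N}$. The differences are only in presentation: you prove the null-set fact by a Fubini induction instead of citing~\cite{M20}, your remark that a unisolvent configuration anywhere in $\Rbb^d$ would suffice is valid, and the case $k=0$ (excluded from Lemma~\ref{lemmaWendland}) is trivial since $\bs G_0^{(N)}=[1]$.
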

\begin{proof}
	Consider the random variable
	\[
	Q\colon D^N\to\Rbb, \quad Q(\bs x)=\det\big(\bs G_k^{(N)}(\bs x)\big).
	\]
    Then, \(Q\) is a polynomial on \(D^{N}\), and it is not the zero
    polynomial by Lemma~\ref{lemmaWendland}.
    Let \(Z_{Q}=\{\bs x\in D^{N}:Q(\bs x)=0\}\) be the 
    algebraic variety of the zeros of \(Q\).
    Since \(Q\) is not the zero polynomial, there holds
	\(
	\lambda_d^{\otimes N}(Z_{Q})=0,
	\)
	where \(\lambda_d\) is the \(d\)-dimensional Lebesgue measure,
  see~\cite{M20}. Further, since \(\Pbb\) is absolutely continuous with
  respect to \(\lambda_d\), we have that \(\Pbb^{\otimes N}\) is
  absolutely continuous with respect to \(\lambda_d^{\otimes N}\)
  and a fortiori
	\[
	\Pbb^{\otimes N}(Z_{Q})=0.
	\]
    Consequently, there holds 
    \(\Pbb^{\otimes N}\big(\det{\bs G}_k^{(N)}=0\big)=0,
    \)
    as claimed.
\end{proof}

To formulate almost-sure statements for the full sample sequence, we consider
the projective limit of the product probability spaces 
\((D^N,\mathcal F^{\otimes N},\Pbb^{\otimes N})\), endowed with the probability
measure \(\Pbb^\infty\)
characterized by
\begin{equation}
	\Pbb^\infty(A\times D\times D\times\ldots)
  =\Pbb^{\otimes N}(A), \quad A\in\mathcal{F}^{\otimes N},
\end{equation}
see, e.g., \cite[Theorem 14.36]{K20}.

\begin{corollary}\label{corollario}
	Let \(\bs x_1,\bs x_2,\bs x_3,\ldots\) be independent samples drawn with law 
  \(\Pbb\). Then, there
  \(\Pbb^\infty\)-almost surely holds \(\det\bs G_k^{(N)}\neq 0\) 
  for every \(N\ge |\Lambda_k|\)
   i.e.,
	\begin{equation}
		\Pbb^{\infty}\big(\det\bs G_k^{(N)}=0
    \textnormal{ for some }N\ge |\Lambda_k|\big)=0.
	\end{equation}
\end{corollary}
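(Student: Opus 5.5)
The plan is to reduce the statement to Lemma~\ref{ass:nondeg} by a countable union bound over \(N\). For each \(N\ge|\Lambda_k|\), let
\[
E_N\isdef\big\{(\bs x_1,\bs x_2,\ldots)\in D^{\Nbb}:\det\bs G_k^{(N)}(\bs x_1,\ldots,\bs x_N)=0\big\}.
\]
The event in the statement is then \(\bigcup_{N\ge|\Lambda_k|}E_N\).

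First, observe that \(\bs G_k^{(N)}\) depends only on the first \(N\) coordinates. Hence \(E_N\) is a cylinder set of the form \(A_N\times D\times D\times\ldots\), where
\[
A_N=\{(\bs x_1,\ldots,\bs x_N)\in D^N:\det\bs G_k^{(N)}=0\}.
\]
The set \(A_N\) lies in \(\mathcal F^{\otimes N}\), since \(\det\bs G_k^{(N)}\) is a polynomial, hence continuous and Borel measurable on \(D^N\). By the defining property of \(\Pbb^\infty\), we have \(\Pbb^\infty(E_N)=\Pbb^{\otimes N}(A_N)\), and Lemma~\ref{ass:nondeg} gives \(\Pbb^{\otimes N}(A_N)=0\) for every \(N\ge|\Lambda_k|\).

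Second, apply countable subadditivity of \(\Pbb^\infty\):
\[
\Pbb^\infty\Big(\bigcup_{N\ge|\Lambda_k|}E_N\Big)\le\sum_{N\ge|\Lambda_k|}\Pbb^\infty(E_N)=0.
\]
This is exactly the claim. Taking complements yields that, \(\Pbb^\infty\)-almost surely, \(\det\bs G_k^{(N)}\neq0\) simultaneously for all \(N\ge|\Lambda_k|\).

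There is no real obstacle here. The only point needing care is the identification of \(E_N\) as a measurable cylinder set, so that the marginal property of the projective limit measure applies. One should also make sure the joint law of the sample sequence is indeed \(\Pbb^\infty\), which holds because the \(\bs x_i\) are independent with law \(\Pbb\).
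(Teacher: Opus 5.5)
Your proposal is correct and follows the paper's proof: you write the event as the countable union of cylinder sets \(A_N\times D\times D\times\ldots\), use the marginal property of \(\Pbb^\infty\) together with Lemma~\ref{ass:nondeg} to get measure zero for each \(N\), and conclude by \(\sigma\)-subadditivity. Your remark on the Borel measurability of \(A_N\) is a small detail the paper leaves implicit.
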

\begin{proof}
  There holds
	\begin{align}
		\big\{\bs x_1,&\bs x_2,\ldots:\text{\(\det\bs G_k^{(N)}=0\) 
    for some \(N\ge |\Lambda_k|\)}\big\}\\
		&=\bigcup_{N=|\Lambda_k|}^\infty \big\{
		\bs x_1,\bs x_2,\ldots:\text{\(\det\bs G_k^{(N)}=0\)}\big\}\\
		&=\bigcup_{N=|\Lambda_k|}^\infty \Big(\big\{\bs x_1,\ldots,\bs x_N:\det\bs
    G_k^{(N)}=0\big\}\times D\times D\times\ldots\Big).
	\end{align}
	By the definition of the projective limit, we have
	\begin{align}
		\Pbb^\infty&\big(\big\{\bs x_1,\ldots,\bs x_N:\det\bs
    G_k^{(N)}=0\big\}\times D\times D\times\ldots\big)\\
		&=\Pbb^{\otimes N}\big(\big\{\bs x_1,\ldots,\bs x_N:\det\bs G_k^{(N)}
    =0\big\}\big) =0.
	\end{align}
  The assertion follows now by the \(\sigma\)-subadditivity of \(\Pbb^\infty\)
  and the fact that the countable union of null sets is a null set.
\end{proof}

To simplify the discussion, for the rest of this work, we tacitly assume
realizations \(\{\bs x_1, \bs x_2,\ldots\}\subset D\) such that
\(\bs G_k^{(N)}\) is invertible for every \(N\ge |\Lambda_k|\). This means that
the results are valid for almost every sample
\(X=\{\bs x_1,\bs x_2,\bs x_3,\ldots\}\).

We are now in the position to prove the convergence of the empirical Gram matrix.
Since each monomial map \(\bs x\mapsto \bs x^{\bs\alpha}\),
\(\bs\alpha\in\Nbb^d_0\), is bounded and continuous on \(D\),
Theorem~\ref{Varadarajan} immediately yields the following lemma.

\begin{lemma}\label{lem:moments_conv}
	For every fixed integer \(k\ge 0\), each
  entry \(g_{i,j}^{(N)}\) of \(\bs G_k^{(N)}\)
  satisfies 
	\[
	\begin{aligned}
		g_{i,j}^{(N)}
		\xrightarrow{N\to\infty}
		g_{i,j}
    \quad\text{$\Pbb$-almost surely},\ i,j=1,\ldots,|\Lambda_k|.
	\end{aligned}
	\]
	In particular, for each such \(k\), there holds
	\(\bs G_{k}^{(N)}\xrightarrow{N\to\infty}\bs G_{k}\) 
	\(\Pbb\)-almost.
\end{lemma}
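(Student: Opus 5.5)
We apply Theorem~\ref{Varadarajan} to each entry of the Gram matrix separately and then combine the countably many almost-sure events into one.

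First, fix \(k\) and indices \(i,j\in\{1,\ldots,|\Lambda_k|\}\). By definition~\eqref{defGkN} and~\eqref{defMuX},
\[
g_{i,j}^{(N)}=\int_D \bs x^{\bs\alpha_i+\bs\alpha_j}\d\widehat{\Pbb}_N,
\qquad
g_{i,j}=\int_D \bs x^{\bs\alpha_i+\bs\alpha_j}\d\Pbb.
\]
The map \(f_{i,j}(\bs x)\isdef\bs x^{\bs\alpha_i+\bs\alpha_j}\) is a polynomial, hence continuous on the compact set \(D\), so \(f_{i,j}\in\Ccal(D)\). Theorem~\ref{Varadarajan} therefore gives \(g_{i,j}^{(N)}\to g_{i,j}\) almost surely with respect to the sampling measure \(\Pbb^\infty\) on the full sequence \(\bs x_1,\bs x_2,\ldots\); the statement writes this as ``\(\Pbb\)-almost surely''. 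Strictly, Varadarajan's theorem already provides one null set outside which convergence holds for all \(f\in\Ccal(D)\) simultaneously. Even without relying on this uniformity, we only need countably many test functions.

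Second, for the matrix statement, let \(E_{i,j}\) denote the exceptional null set for the pair \((i,j)\). The union \(\bigcup_{i,j}E_{i,j}\) is a finite union of null sets, so it is null. Outside it, all \(|\Lambda_k|^2\) entries converge. Since \(\Rbb^{|\Lambda_k|\times|\Lambda_k|}\) is finite-dimensional, all norms are equivalent, and entrywise convergence is equivalent to convergence in, say, the Frobenius or spectral norm. This yields \(\bs G_k^{(N)}\to\bs G_k\) almost surely.

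To get one null set that works for every \(k\in\Nbb_0\) at once, we take the union over the countable family of all monomials \(\bs x^{\bs\beta}\), \(\bs\beta\in\Nbb_0^d\). This is again a countable union of null sets, hence null.

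No step is a genuine obstacle. The only point needing care is the bookkeeping of exceptional sets, namely making sure the probability space is the sequence space with \(\Pbb^\infty\), so that ``almost surely'' refers to the whole sample sequence. This is handled by the countable-union argument above.
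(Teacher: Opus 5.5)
Your proposal is correct and follows the same route as the paper: apply Theorem~\ref{Varadarajan} to the continuous monomial \(\bs x^{\bs\alpha_i+\bs\alpha_j}\) to get entrywise convergence, then conclude convergence of the matrix from entrywise convergence. Your additional bookkeeping of the exceptional null sets under \(\Pbb^\infty\) is a harmless and slightly more careful addition that the paper leaves implicit.
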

\begin{proof}
  There holds by Theorem~\ref{Varadarajan} that
	\begin{align*}
		g_{i,j}^{(N)}
		=({\bs x}^{{\bs \alpha}_i},{\bs x}^{{\bs \alpha}_j})_{\widehat{\Pbb}_N}
  =(1,{\bs x}^{{\bs\alpha}_i+{\bs\alpha}_j})_{\widehat{\Pbb}_N}
		\xrightarrow{N\to\infty}
(1,{\bs x}^{{\bs\alpha}_i+{\bs\alpha}_j})_{\Pbb}
		=({\bs x}^{{\bs \alpha}_i},{\bs x}^{{\bs \alpha}_j})_{\Pbb}
=
		g_{i,j}
	\end{align*}
\(\Pbb\)-almost surely. The convergence of the empirical Gram matrix follows
from the convergence of each of its entries.
\end{proof}

\subsection{Convergence of discrete orthogonal polynomials}
In this subsection, we prove the convergence of the discrete orthogonal
polynomials associated to \(\widehat{\Pbb}_N\) towards their counterparts
associated to \(\Pbb\). 

\begin{proposition}\label{lem:ort_pol_conv_multi}
	For each \(\bs\alpha\in\Lambda_k\), there holds
	\(
	\pi_{\bs\alpha}^{(N)}\xrightarrow{N\to\infty}\pi_{\bs\alpha},
	\)
	uniformly in \(\bs x\in D\), \(\Pbb\)-almost surely.
\end{proposition}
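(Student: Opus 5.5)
We will express the monic orthogonal polynomials through the Gram matrices, so that the statement follows from Lemma~\ref{lem:moments_conv} and the continuity of finitely many algebraic operations. Fix \(\bs\alpha=\bs\alpha_i\in\Lambda_k\). Gram-Schmidt on the ordered monomials gives \(\pi_{\bs\alpha_i}=\bs x^{\bs\alpha_i}+\sum_{j<i}\ell_{i,j}\bs x^{\bs\alpha_j}\). The coefficient vector \(\bs\ell_i=[\ell_{i,1},\ldots,\ell_{i,i-1}]^\intercal\) is characterized by orthogonality of \(\pi_{\bs\alpha_i}\) to \(\bs x^{\bs\alpha_1},\ldots,\bs x^{\bs\alpha_{i-1}}\). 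With \(\bs G_{i-1}\) the leading \((i-1)\times(i-1)\) principal submatrix of \(\bs G_k\) and \(\bs b_i=[g_{j,i}]_{j=1}^{i-1}\), this condition reads
\[
\bs G_{i-1}\bs\ell_i=-\bs b_i .
\]
The same holds in the discrete case with \(\bs G^{(N)}_{i-1}\bs\ell_i^{(N)}=-\bs b_i^{(N)}\), using the corresponding entries of \(\bs G_k^{(N)}\). Equivalently, one may use the Cholesky factor of \(\bs G_k\), whose inverse's rows give the orthonormal coefficients.

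The principal submatrices of a positive definite matrix are positive definite. Hence Remark~\ref{Rem:positive_emp_gr} makes \(\bs G_{i-1}\) invertible. Our standing assumption, via Corollary~\ref{corollario}, makes \(\bs G^{(N)}_{i-1}\) invertible for \(N\ge|\Lambda_k|\) on a full-measure set of realizations. On the full-measure set where Lemma~\ref{lem:moments_conv} also holds, \(\bs G^{(N)}_{i-1}\to\bs G_{i-1}\) and \(\bs b_i^{(N)}\to\bs b_i\). Matrix inversion is continuous on the open set of invertible matrices, for instance by Cramer's rule, as a rational function with nonvanishing limiting determinant. Therefore \(\bs\ell_i^{(N)}\to\bs\ell_i\) almost surely. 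The intersection of the two full-measure events still has full measure.

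Uniform convergence then follows from
\[
\sup_{\bs x\in D}\big|\pi^{(N)}_{\bs\alpha_i}(\bs x)-\pi_{\bs\alpha_i}(\bs x)\big|
\le\sum_{j<i}\big|\ell^{(N)}_{i,j}-\ell_{i,j}\big|\,\sup_{\bs x\in D}|\bs x^{\bs\alpha_j}|.
\]
The suprema are finite because \(D\) is compact. Since there are finitely many indices, the bound tends to zero. Only finitely many \(\bs\alpha\) are involved, so a single null set serves all of them.

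The main obstacle is bookkeeping rather than analysis. One must check that the discrete Gram-Schmidt coefficients coincide with the solution of the empirical normal equations, which requires invertibility of the empirical principal submatrices; Corollary~\ref{corollario} provides this. One must also make the almost-sure events combine correctly, where Theorem~\ref{Varadarajan}'s event for countably many monomials is intersected with the event of Corollary~\ref{corollario}.
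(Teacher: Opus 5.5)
Your proof is correct and follows the paper's argument essentially step by step. The Gram--Schmidt coefficients are characterized by the normal equations $\bs G_{i-1}\bs\ell_i=-\bs g_i$ and their empirical analogues, the coefficients converge by Lemma~\ref{lem:moments_conv} and continuity of matrix inversion at the invertible limit $\bs G_{i-1}$, and uniform convergence follows from boundedness of the monomials on the compact set $D$. Your extra bookkeeping only makes explicit what the paper leaves implicit. This covers invertibility of the leading principal submatrices, which for the empirical ones also follows from $\bs G_{i-1}^{(N)}\to\bs G_{i-1}$ and openness of the invertible matrices, and the intersection of the full-measure events.
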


\begin{proof}
  Fix \({\bs\alpha}_i\in\Lambda_k\).
  By the triangular structure induced by the graded lexicographic ordering,
  we can write
	\[
	\pi_{{\bs\alpha}_i}(\bs x)
	=
	\bs x^{{\bs\alpha}_i}
	+\sum_{j=1}^{i-1}\ell_{i,j}\bs x^{{\bs\alpha}_j},
	\quad
	\pi_{{\bs\alpha}_i}^{(N)}(\bs x)
	=
	\bs x^{{\bs\alpha}_i}
	+\sum_{j=1}^{i-1}\ell_{i,j}^{(N)}\bs x^{{\bs\alpha}_j}.
	\]
	The orthogonality conditions
	\((\pi_{{\bs\alpha}_i},\bs x^{{\bs\alpha}_j})_\Pbb=0\) 
  for \(j=1,\ldots,i-1\) are equivalently formulated by the linear system
	\[
    \bs G_{i-1} \bs \ell_i=- \bs g_i,
    \]
    where 
    \[
      \bs G_{i-1}
      =[(\bs x^{{\bs\alpha}_j},\bs x^{{\bs\alpha}_r})_\Pbb]_{j,r=1}^{i-1},\quad 
  \bs\ell_i=[\ell_{i,1},\ldots,\ell_{i,i-1}]^\intercal,\quad 
  \bs g_i=[(\bs x^{{\bs\alpha}_i},\bs x^{{\bs\alpha}_j})_\Pbb]_{j=1}^{i-1}.
	\]
	Likewise, we obtain a linear system with respect to \(\widehat{\Pbb}_N\) given
  by
	\[
    \bs G_{i-1}^{(N)} \bs \ell_i^{(N)}=- \bs g_i^{(N)},
	\]
  where all quantities are obtained by replacing \(\Pbb\) with its empirical
  counterpart.
 The convergence of \(\bs G_{i-1}^{(N)}\) to \(\bs G_{i-1}\) is shown in
 Lemma~\ref{lem:moments_conv}, while the \(\Pbb\)-almost sure convergence
 of \(\bs g_i^{(N)}\xrightarrow{N\to\infty}\bs g_i\) follows in a similar fashion.

 By assumption, \(\bs G_i\) is invertible. Therefore, by
 the continuity of the
 matrix inversion on \(\mathrm{GL}_{i-1}(\mathbb{R})\), we have
	\[
    \bs \ell_i^{(N)}=-\big(\bs G_{i-1}^{(N)}\big)^{-1} \bs g_i^{(N)}
			\xrightarrow{N\to\infty}
  -\bs G_{i-1}^{-1} \bs g_i
	=\bs \ell_i
	\quad\text{$\Pbb$-almost surely}.
	\]
  This proves the $\Pbb$-almost sure convergence	
  \(\pi_{\bs\alpha}^{(N)}\xrightarrow{N\to\infty}\pi_{\bs\alpha}\).

	Finally, since all monomials are bounded by a common constant, i.e.,
  \(|\bs x^{{\bs\alpha}_j}|\le C\) for all \(\bs x\in D\)
  and all \(j\leq i-1\) and some \(C>0\), we arrive at
	\[
	\sup_{\bs x\in D}
  \big|\pi_{{\bs\alpha}_i}^{(N)}(\bs x)-\pi_{{\bs\alpha}_i}(\bs x)\big|
	\le C \sum_{j=1}^{i-1}\big|\ell_{i,j}^{(N)}-\ell_{i,j}\big|
			\xrightarrow{N\to\infty} 0
	\quad\text{$\Pbb$-almost surely}.
	\]
	This proves the uniform convergence.
\end{proof}

Since all polynomials are bounded on \(D\), we obtain the \(L^2(D,\Pbb)\) 
convergence as a direct consequence.
\begin{corollary}\label{cor:norm_conv_multi}
	Under the assumptions of Proposition~\ref{lem:ort_pol_conv_multi}, 
  for each \(\bs\alpha\in\Lambda_k\), there holds
	\[
	\lim_{N \to \infty}\big\|\pi_{\bs\alpha}^{(N)}\big\|_{\widehat{\Pbb}_N}
	=
	\|\pi_{\bs\alpha}\|_\Pbb
	\quad\text{
	$\Pbb$-almost surely}.
	\]
\end{corollary}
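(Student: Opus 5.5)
The plan is to reduce the statement to the convergence of the coefficient vectors in Proposition~\ref{lem:ort_pol_conv_multi} together with Lemma~\ref{lem:moments_conv}. Both norms are quadratic forms of the same Gram matrices, so no uniform law of large numbers over a function class is needed.

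Fix \(\bs\alpha=\bs\alpha_i\). Write \(\bs c^{(N)}\isdef[\ell_{i,1}^{(N)},\ldots,\ell_{i,i-1}^{(N)},1]^\intercal\) and \(\bs c\isdef[\ell_{i,1},\ldots,\ell_{i,i-1},1]^\intercal\), viewed as vectors in \(\Rbb^{i}\). Both norms are quadratic forms in the leading \(i\times i\) blocks of the Gram matrices:
\[
\big\|\pi_{\bs\alpha_i}^{(N)}\big\|_{\widehat{\Pbb}_N}^2=\big(\bs c^{(N)}\big)^\intercal\bs G_{i}^{(N)}\bs c^{(N)},\qquad
\|\pi_{\bs\alpha_i}\|_{\Pbb}^2=\bs c^\intercal\bs G_{i}\bs c.
\]
Here \(\bs G_i\) and \(\bs G_i^{(N)}\) denote the leading blocks, as in the proof of Proposition~\ref{lem:ort_pol_conv_multi}. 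The first identity is just the expansion of \(\pi_{\bs\alpha_i}^{(N)}\) in monomials together with bilinearity of the empirical inner product.

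By Lemma~\ref{lem:moments_conv}, \(\bs G_i^{(N)}\to\bs G_i\) entrywise \(\Pbb\)-almost surely. By the proof of Proposition~\ref{lem:ort_pol_conv_multi}, \(\bs c^{(N)}\to\bs c\) \(\Pbb\)-almost surely. Intersecting the two full-measure events (a countable intersection, since finitely many entries are involved), we obtain convergence on a common full-measure set. There, the map \((\bs v,\bs A)\mapsto\bs v^\intercal\bs A\bs v\) is a polynomial, hence continuous, so the squared norms converge. Taking square roots, which is continuous on \([0,\infty)\), gives the claim.

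An alternative route splits
\[
\big\|\pi^{(N)}\big\|_{\widehat{\Pbb}_N}^2-\|\pi\|_\Pbb^2=\Big(\big\|\pi^{(N)}\big\|_{\widehat{\Pbb}_N}^2-\|\pi\|_{\widehat{\Pbb}_N}^2\Big)+\Big(\|\pi\|_{\widehat{\Pbb}_N}^2-\|\pi\|_\Pbb^2\Big).
\]
The first bracket is bounded by \(\sup_D|\pi^{(N)}-\pi|\cdot\sup_D|\pi^{(N)}+\pi|\), which tends to zero by the uniform convergence in Proposition~\ref{lem:ort_pol_conv_multi}, since \(\widehat{\Pbb}_N\) is a probability measure. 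The second bracket tends to zero by Theorem~\ref{Varadarajan} applied to the fixed continuous function \(\pi^2\).

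The only mildly delicate point is the almost-sure bookkeeping: the exceptional null sets from Varadarajan's theorem and from the invertibility assumption must be combined. Since only countably many events are involved, this is harmless.
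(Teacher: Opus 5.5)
Your proof is correct, and your main argument takes a different route from the paper. The paper gives no separate proof. It only remarks that, because polynomials are bounded on \(D\), the corollary is a direct consequence of the uniform convergence in Proposition~\ref{lem:ort_pol_conv_multi}.

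The argument the paper actually writes out is the quantitative version in the appendix, Corollary~\ref{cor:rate_orthonormal_app_app}, and it uses exactly your alternative splitting. The first term is controlled by \(\sup_D|\pi^{(N)}-\pi|\) and \(\sup_D|\pi^{(N)}+\pi|\). The second is controlled by a quadrature estimate for the fixed polynomial \(\pi^2\): Koksma--Hlawka there, Theorem~\ref{Varadarajan} in the qualitative setting.

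That second bracket is genuinely needed. Uniform convergence and boundedness control \(\pi^{(N)}_{\bs\alpha}-\pi_{\bs\alpha}\) in any probability norm. However, the corollary also changes the measure from \(\widehat{\Pbb}_N\) to \(\Pbb\), and that step requires weak convergence of the empirical measure applied to \(\pi_{\bs\alpha}^2\). Your alternative route makes this step explicit, whereas the paper's one-line remark leaves it implicit.

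Your primary route stays entirely finite-dimensional. You write both squared norms as the quadratic forms \((\bs c^{(N)})^\intercal\bs G_i^{(N)}\bs c^{(N)}\) and \(\bs c^\intercal\bs G_i\bs c\) in the leading Gram blocks. Then you need only the entrywise convergence of Lemma~\ref{lem:moments_conv} and the coefficient convergence \(\bs\ell_i^{(N)}\to\bs\ell_i\) from the proof of Proposition~\ref{lem:ort_pol_conv_multi}. Uniform convergence and a separate application of Varadarajan's theorem to \(\pi^2\) are never used, because the change of measure is already encoded in the Gram matrices.

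The splitting route, in turn, is not tied to the monomial expansion and works for any uniformly convergent sequence of continuous functions. It is also the route that transfers directly to convergence rates, as the appendix shows.
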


\begin{remark}
Together with Proposition~\ref{lem:ort_pol_conv_multi} and 
Corollary~\ref{cor:norm_conv_multi}, we obtain the uniform convergence 
of discrete orthonormal polynomials to their continuous counterparts after 
normalization.
\end{remark}

A similar statement remains valid when the discrete orthogonal polynomials are
constructed from one realization, while the inner product is induced by
the empirical measure associated with another realization, both drawn
from the same probability measure \(\Pbb\). Such statement is useful when
comparing polynomials across different sample sets or resolutions.

\begin{proposition}\label{lem:two_empirical}
Let
	\(
	X_N=\{\bs x_1,\ldots,\bs x_N\}\subset D,
	\quad
	Y_M=\{\bs y_1,\ldots,\bs y_M\}\subset D,
	\)
	be two independent sets of independent samples drawn with common law
	\(\Pbb\). Let 
    \(\widehat{\Pbb}_M^Y\) be the
    empirical measure associated with $Y_M$.
    Then, for any fixed
	\(\bs\alpha,\bs\beta\in\Lambda_k\), there holds
	\[
	\lim_{M \to \infty} \lim_{N \to \infty}
	\big(\pi_{\bs\alpha}^{(N)}, \pi_{\bs\beta}^{(N)}\big)_{\widehat{\Pbb}_M^Y}
	=
	\lim_{N \to \infty}\lim_{M \to \infty}
	\big(\pi_{\bs\alpha}^{(N)}, \pi_{\bs\beta}^{(N)}\big)_{\widehat{\Pbb}_M^Y}
	=
	(\pi_{\bs\alpha}, \pi_{\bs\beta})_{\Pbb}
	\]
	$\Pbb$-almost surely.
	In particular, if \(\{\pi_{\bs\gamma}\}_{\bs\gamma\in\Lambda_k}\) is
  orthonormal
	with respect to \(\Pbb\), then the limit equals
  \(\delta_{\bs\alpha,\bs\beta}\).
\end{proposition}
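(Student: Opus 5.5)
The plan is to reduce everything to finitely many moment convergences by expanding the discrete orthogonal polynomials in the monomial basis, exactly as in the proof of Proposition~\ref{lem:ort_pol_conv_multi}. Writing \(\bs\alpha=\bs\alpha_i\), \(\bs\beta=\bs\alpha_j\), we have
\[
\pi_{\bs\alpha}^{(N)}=\sum_{r\le i}\ell^{(N)}_{i,r}\bs x^{\bs\alpha_r},\qquad
\pi_{\bs\beta}^{(N)}=\sum_{s\le j}\ell^{(N)}_{j,s}\bs x^{\bs\alpha_s},
\]
with \(\ell^{(N)}_{i,i}=1\). Here the coefficients depend only on \(X_N\), and the inner product only on \(Y_M\). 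By bilinearity,
\[
\big(\pi_{\bs\alpha}^{(N)},\pi_{\bs\beta}^{(N)}\big)_{\widehat{\Pbb}_M^Y}
=\sum_{r\le i}\sum_{s\le j}\ell^{(N)}_{i,r}\ell^{(N)}_{j,s}\,
\big(\bs x^{\bs\alpha_r},\bs x^{\bs\alpha_s}\big)_{\widehat{\Pbb}_M^Y}
=(\bs\ell_i^{(N)})^\intercal\bs G_k^{(M),Y}\bs\ell_j^{(N)},
\]
where the coefficient vectors are padded by zeros and \(\bs G_k^{(M),Y}\) is the empirical Gram matrix of \(Y_M\). The quantity is thus a bilinear form \(B(\bs a,\bs b,\bs H)=\bs a^\intercal\bs H\bs b\), which is jointly continuous in its arguments, evaluated at \((\bs\ell_i^{(N)},\bs\ell_j^{(N)},\bs G_k^{(M),Y})\).

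Next I would fix the probabilistic setting. Independence of \(X\) and \(Y\) lets us work on the product of the two projective-limit spaces. By Proposition~\ref{lem:ort_pol_conv_multi}, \(\bs\ell_i^{(N)}\to\bs\ell_i\) on a full-measure set \(\Omega_X\). By Lemma~\ref{lem:moments_conv} applied to \(Y\), \(\bs G_k^{(M),Y}\to\bs G_k\) on a full-measure set \(\Omega_Y\). Then \(\Omega_X\times\Omega_Y\) has full product measure, and we argue pointwise on it. Note that only finitely many scalar sequences are involved, so no uniformity issue arises in intersecting the null sets.

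For the iterated limits, take first \(N\to\infty\) with \(M\) fixed: \(B(\bs\ell_i^{(N)},\bs\ell_j^{(N)},\bs G_k^{(M),Y})\to B(\bs\ell_i,\bs\ell_j,\bs G_k^{(M),Y})=(\pi_{\bs\alpha},\pi_{\bs\beta})_{\widehat{\Pbb}_M^Y}\). Letting \(M\to\infty\) then gives \(\bs\ell_i^\intercal\bs G_k\bs\ell_j=(\pi_{\bs\alpha},\pi_{\bs\beta})_\Pbb\). In the other order, \(M\to\infty\) first gives \((\pi^{(N)}_{\bs\alpha},\pi^{(N)}_{\bs\beta})_\Pbb\) by continuity in \(\bs H\), and then \(N\to\infty\) yields the same limit by continuity in \(\bs a,\bs b\). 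Equivalently, the uniform convergence of Proposition~\ref{lem:ort_pol_conv_multi} together with Theorem~\ref{Varadarajan} for the continuous function \(\pi_{\bs\alpha}\pi_{\bs\beta}\) would also work.

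Finally, in the orthonormal case the same argument applies to the normalized polynomials. Their coefficients are \(\bs\ell_i^{(N)}/\|\pi^{(N)}_{\bs\alpha}\|_{\widehat{\Pbb}_N}\), which converge by Corollary~\ref{cor:norm_conv_multi} because the limit norm is positive (Remark~\ref{Rem:positive_emp_gr}). Hence the limit equals \(\delta_{\bs\alpha,\bs\beta}\). The only delicate point is the almost-sure bookkeeping across two independent sample sequences, i.e., justifying that the full-measure event can be taken as a product. Joint continuity of the finite-dimensional bilinear form in fact gives the double limit, not just the iterated ones.
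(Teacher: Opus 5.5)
Your proof is correct and follows essentially the paper's route. The paper takes the two limits in each order using the uniform convergence $\pi^{(N)}_{\bs\alpha}\to\pi_{\bs\alpha}$ from Proposition~\ref{lem:ort_pol_conv_multi} together with Theorem~\ref{Varadarajan} for $\widehat{\Pbb}_M^Y$, which is exactly the alternative you mention; your form $(\bs\ell_i^{(N)})^\intercal\bs G_k^{(M),Y}\bs\ell_j^{(N)}$ is just the finite-dimensional version of those same two ingredients, and it has the small bonus that joint continuity gives the double limit, not only the iterated ones.
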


\begin{proof}
	By Proposition~\ref{lem:ort_pol_conv_multi}, we have
\(\pi_{\bs\alpha}^{(N)}\xrightarrow{N\to\infty}\pi_{\bs\alpha}\) and
\(\pi_{\bs\beta}^{(N)}\xrightarrow{N\to\infty}\pi_{\bs\beta}\) $\Pbb$-almost surely
uniformly on \(D\).
Fix \(M\) and let \(N\to\infty\). By uniform convergence and boundedness,
	\[
    \big(\pi_{\bs\alpha}^{(N)},\pi_{\bs\beta}^{(N)}\big)_
    {\widehat{\Pbb}_M^Y}
    		\xrightarrow{N\to\infty} (\pi_{\bs\alpha},\pi_{\bs\beta})_{\widehat{\Pbb}_M^Y}.
	\]
    Now let \(M\to\infty\). Since \(\pi_{\bs\alpha}\pi_{\bs\beta}\) is
    continuous and bounded on \(D\), weak convergence of \(\widehat{\Pbb}_M^Y\)
    to \(\Pbb\) yields
    \[
    (\pi_{\bs\alpha},\pi_{\bs\beta})_{\widehat{\Pbb}_M^Y}\xrightarrow{M\to\infty}
    (\pi_{\bs\alpha},\pi_{\bs\beta})_\Pbb
    \]
    $\Pbb$-almost surely.
	This gives the first iterated limit. The second iterated limit is obtained 
	by reversing the order of the two steps. By observing that for fixed 
	\(N\) the function \(\pi_{\bs\alpha}^{(N)}\pi_{\bs\beta}^{(N)}\) is
	continuous and bounded, we obtain
	\[
	\big(\pi_{\bs\alpha}^{(N)},\pi_{\bs\beta}^{(N)}\big)_{\widehat{\Pbb}_M^Y}
  		\xrightarrow{M\to\infty}\big(\pi_{\bs\alpha}^{(N)},\pi_{\bs\beta}^{(N)}\big)_{\Pbb},
	\]
	and therefore
\[
\lim_{N\to\infty}\lim_{M\to\infty}
\big(\pi_{\bs\alpha}^{(N)},\pi_{\bs\beta}^{(N)}\big)_{\widehat{\Pbb}_M^Y}
=
(\pi_{\bs\alpha},\pi_{\bs\beta})_\Pbb
\]
by uniform convergence of \(\pi_{\bs\alpha}^{(N)}\xrightarrow{N\to\infty}\pi_{\bs\alpha}\) and
\(\pi_{\bs\beta}^{(N)}\xrightarrow{N\to\infty}\pi_{\bs\beta}\) on \(D\).
\end{proof}

\subsection{Rates and discrepancy effects}\label{subsec:rate}
The $\Pbb$-almost sure limits established above are qualitative.
In applications, one is often interested in quantitative convergence rates,
which depend on the sampling quality. In this regard, especially for
low-discrepancy point
sets, one can derive deterministic bounds in terms of the
\emph{star discrepancy} \(\mathrm{disc}_N^*\) via
Koksma-Hlawka estimates, under finite Hardy-Krause variation assumptions for
the relevant integrands. Since these bounds are not used later, we defer the
precise statements and proofs to Appendix~\ref{app:rates_discrepancy}.

\section{Samplets}\label{sec:samplets}
\subsection{Banach frame setting}\label{subsec:BFS}
Samplets are a multiresolution analysis of localized discrete signed measures
and can be considered a discrete version of wavelets.
We recall here their construction as introduced in~\cite{HM22}.
The reader will notice that, in contrast to the 
existing literature, in this work samplets are rescaled by the harmless 
factor \(1/N\). The presence of this factor becomes relevant when varying the 
number of samples \(N\), especially when resorting to results such as Varadarajan's 
theorem and the convergence of empirical moments established in 
Section~\ref{sec:preliminaries}.

As before, let \(X_N=\{\bs x_1,\ldots,\bs x_N\}\subset D\) be an independent sample
drawn with law \(\Pbb\)
and let \(\widehat{\Pbb}_N\) be the associated empirical measure
defined in~\eqref{eq:emp_meas}. We set
\[
\Xcal' \isdef \spn\{ \delta_{\bs x_1}, \ldots, \delta_{\bs x_N} \}.
\]
Elements of \(\Xcal'\) are understood as finitely supported distributions on
\(D\), acting on functions \(f\in\mathcal{C}(D)\) by
\begin{equation}\label{eq:defActionu}
\bigg(\sum_{i=1}^N u_i\delta_{\bs x_i}\bigg)(f)=\sum_{i=1}^N u_i f(\bs x_i).
\end{equation}
To obtain suitable identifications between vectors, functions and distributions, 
we employ the framework proposed in~\cite{balazs2024construction}. Concretely,
we consider the synthesis and analysis operators
    \begin{align}
    &S_N\colon\Rbb^N\to\Xcal',\quad [c_i]_{i=1}^N\mapsto
    \sqrt N\sum_{i=1}^N c_i\delta_{\bs x_i} \in \Xcal', \\
    \label{defSstar}
    &S^\ast_N\colon \Ccal(D)\to\Rbb^N,\quad f\mapsto
    \frac{1}{\sqrt N}[f(\bs x_i)]_{i=1}^N\in\Rbb^N.
    \end{align}
These operators are dual with respect to the specific duality pairing
\[
  \langle\cdot,\cdot\rangle_{\Xcal' \times\Ccal(D)} \colon\Xcal' \times\Ccal(D) 
  \to \Rbb,
  \qquad
  \bigg\langle\sum_{i=1}^N c_i\delta_{{\bs x}_i},
  f\bigg\rangle_{\Xcal' \times\Ccal(D)} 
  \isdef
  \frac{1}{N} \sum_{i=1}^N c_i f({\bs x}_i),
\]
 Concatenating \(S_N\) and \(S_N^\ast\) gives rise to the frame operator
 \begin{equation}\label{eq:FrameOp}
 F_N\isdef S_NS_N^\ast\colon\Ccal(D)\to\Xcal',\quad f\mapsto
 \sum_{i=1}^Nf({\bs x}_i)\delta_{{\bs x}_i}.
 \end{equation}
    The operator $S_N$ is obviously invertible with inverse
    \begin{equation}
        S^{-1}_N\colon\Xcal'\to\Rbb^N,\quad
        \sum_{i=1}^Nc_i\delta_{\bs x_i}\mapsto\frac{1}{\sqrt N}[c_i]_{i=1}^N\in\Rbb^N.
    \end{equation}
    Using the operator \(S^{-1}_N\), we transport the standard inner product on
    \(\Rbb^N\) to \(\Xcal'\), i.e.,
\begin{equation}\label{rescaledIP}
	(u, v )_{\Xcal'} \isdef \big(S^{-1}_Nu,S^{-1}_Nv\big)_{\Rbb^N}, \quad u,v\in\Xcal'.
 \end{equation}
 Concretely, we obtain for \(	u = \sum_{i=1}^{N} u_i \delta_{\bs x_i},\
	v = \sum_{i=1}^{N} v_i \delta_{\bs x_i}\) that
 \begin{equation}
   (u, v )_{\Xcal'} = \frac{1}{N}\sum_{i=1}^{N} u_i v_i.
\end{equation}
In particular, this implies that 
\(\{\delta_{{\bs x}_1},\ldots,\delta_{{\bs x}_N}\}\) forms an orthogonal basis
in \(\Xcal'\). The corresponding orthonormal basis is given by
\(\big\{\widehat\delta_{{\bs x}_1},\ldots,\widehat\delta_{{\bs x}_N}\big\}\), where
\[
\widehat\delta_{{\bs x}_i}\isdef\sqrt{N}\delta_{{\bs x}_i}
\quad\text{for }i=1,\ldots,N.
\]

Similarly to the \(\Xcal'\)-inner product, we can identify 
the $\widehat\Pbb_N$-inner product defined in~\eqref{defMuX}
 as
\begin{equation}
    (f,g)_{\widehat\Pbb_N}=\big(S^\ast_N f,S^\ast_N g\big)_{\Rbb^N}=
    \langle F_N f,g\rangle_{\Xcal'\times\Ccal(D)}
    =\frac 1 N (F_Nf)(g), \quad\text{for all }
    f,g\in\Ccal(D).
\end{equation}
In particular, we find
\begin{equation}
    (f,g)_{\widehat\Pbb_N}=\big(F_N f,F_N g\big)_{\Xcal'}.
\end{equation}
    
\subsection{Samplet construction}
Samplets are a form of multiresolution analysis on \(\Xcal'\).
They are constructed using a nested sequence of subspaces
\begin{equation}\label{eq:nested_seq}
	\Xcal'_0 \subset \Xcal'_1 \subset \cdots \subset \Xcal'_J \isdef \Xcal',
\end{equation}
where \(\Xcal'_j \isdef \spn(\bs \Phi_j)\), and \(\bs \Phi_j 
\isdef \{\varphi_{j,\ell}\}_\ell\) is an orthonormal basis with respect
to \((\cdot,\cdot)_{\Xcal'}\).
In our case, each scaling distribution \(\varphi_{j,\ell}\) is a 
linear combination of Dirac measures.
We may orthogonally decompose each \(\Xcal'_{j+1}\) as
\begin{equation}\label{eq:nested_seq2}
	\Xcal'_{j+1} = \Xcal'_j \overset{\perp}{\oplus} \Scal_j',
\end{equation}
and denote by \(\bs \Sigma_j \isdef \{\sigma_{j,\ell}\}_\ell\) the orthonormal 
basis of each \emph{detail space} \(\Scal_j'\), which is called the 
\emph{samplet basis}. Iterating yields
\[
\bs \Sigma_J = \bs \Phi_0 \cup \bigcup_{j=0}^{J-1} \bs \Sigma_j,
\]
which forms a basis of \(\Xcal'\). 
To promote data compression, 
the distributions \(\sigma_{j,\ell}\) are constructed to 
satisfy the vanishing moment condition
\[
\sigma_{j,\ell}(p) = 0
\quad \text{for all } p \in \Pcal_k,
\]
for fixed $k\in\Nbb_0$.

The simplest construction of the multiresolution analysis~\eqref{eq:nested_seq}
is based on a hierarchical clustering of the set \(X_N\), which amounts to a
clustering of the Dirac measures in \(\Xcal'\) with respect to their supports.
To this end, we introduce the notion of a \emph{cluster tree}.

\begin{definition}\label{def:cluster-tree_revised}
	Let \(\Tcal = (V, E)\) be a tree with vertices \(V\) and edges \(E\).
	We denote the set of leaves of \(\Tcal\) by
	\(
	\Lcal(\Tcal) \isdef \{\tau \in V : \tau \text{ has no children}\}.
	\)
	The tree \(\Tcal\) is a \emph{cluster tree} for \(X_N\) if \(X_N\) is the
  root of \(\Tcal\) and each vertex \(\tau \in V \setminus \Lcal(\Tcal)\)
  is the disjoint union of its children.
	The level \(j_\tau\) of \(\tau\) is its distance from the root
  and the depth of the tree is denoted by \(J \isdef \max_{\tau \in \Tcal} j_\tau\).
  Moreover we refer to the set of clusters at level \(j\) as
	\(
	\Tcal_j \isdef \{\tau \in \Tcal : j_\tau = j\}.
	\)
\end{definition}

For simplicity, we exclusively consider binary cluster trees here, i.e., every
non-leaf cluster has exactly two children, and remark that different
constructions are applicable with the straightforward modifications.

With the cluster tree at our disposal, we turn to the construction of the nested
sequence in~\eqref{eq:nested_seq}.
Each scale \(\Xcal'_j\), associated with the corresponding level \(\Tcal_j\),
is constructed recursively from the information contained in \(\Xcal'_{j+1}\).
We denote the set of basis elements, which are
supported in the cluster \(\tau\) at level \(j\leq J\) by \({\bs\Phi}^{\tau,(N)}_j\) 
and set
 \(\bs\Phi_j^{(N)}=\big\{{\bs\Phi}^{\tau,(N)}_j\big\}_{\tau\in\Tcal_j}\). Note that
 here and
 in what follows,
 we make the dependence of the distributions
 on the sample explicit by using the superscript \((N)\).
By a slight abuse of notation, we identify \({\bs\Phi}^{\tau,(N)}_j\) and
\(\bs\Phi_j^{(N)}\), respectively, as row vectors,
where each entry corresponds to a basis element.

For each \(j\leq J\) and each \(\tau\in\Tcal\), we introduce the 
\emph{moment matrix}
\begin{equation}\label{momentMatrix}
	{\bs M}_{j+1}^\tau
	\isdef
  \left[{\bs\Phi}^{\tau,(N)}_{j+1}(\bs x^{\bs{\alpha}_i})\right]_{i=1}^{|\Lambda_k|}
	\in \mathbb{R}^{|\Lambda_k| \times |\bs\Phi_{j+1}^\tau|},
\end{equation}
where we set
\(\bs\Phi_{J+1}^{\tau,(N)}\isdef
\big\{\widehat\delta_{{\bs x}_i}\big\}_{{\bs x}_i\in\tau}\).
Next, we consider the QR decomposition of the transpose moment matrix given by
\[
(\bs{M}^\tau_{j+1})^{\intercal} = \bs{Q}^\tau_j \bs{R}^\tau_j,
\]
where \(\bs{Q}^\tau_j\) is orthogonal and \(\bs{R}^\tau_j\) is upper triangular.
The columns of the matrix \({\bs Q}_j^\tau\) correspond to the filter coefficients,
which ensure orthonormality with respect to the \((\cdot,\cdot)_{\Xcal'}\)-inner product
 as well as
the vanishing moment condition.
The refinement relation in cluster \(\tau\) at level \(j\leq J\) is hence given by
\begin{equation}
\big[ {\bs \Phi}_j^{\tau,(N)}, {\bs \Sigma}_j^{\tau,(N)}\big]
\isdef {\bs \Phi}_{j+1}^{\tau,(N)} {\bs Q}_j^\tau.
\end{equation}
By splitting the matrix \({\bs Q}_j^\tau\) according to
\({\bs Q}_j^\tau=\big[ {\bs Q}_{j,\Phi}^\tau, {\bs Q}_{j,\Sigma}^\tau \big]\),
where \({\bs Q}_{j,\Phi}^\tau\) contains the first \(|\Lambda_k|\) columns and
\({\bs Q}_{j,\Sigma}^\tau\) the remaining ones,
we can write
\begin{equation}\label{eq:samplets_refinement}
\big[ {\bs \Phi}_j^{\tau,(N)}, {\bs \Sigma}_j^{\tau,(N)}\big]
= {\bs \Phi}_{j+1}^{\tau,(N)}
[ {\bs Q}_{j,\Phi}^\tau, {\bs Q}_{j,\Sigma}^\tau ].
\end{equation}
Therefore, the \emph{scaling distributions} at level \(j\) supported on \(\tau\)
are given by
\[
\varphi_{j,\ell}^{\tau,(N)}=\sum_{i=1}^{|\bs\Phi_{j+1}^{\tau,(N)}|}
(\bs Q_{j,\Phi}^\tau)_{i,\ell}\varphi_{j+1,i}^{\tau,(N)},
\]
while the \emph{samplets} at the same level supported on \(\tau\) are given by
\[
  \sigma_{j,\ell}^{\tau,(N)}=\sum_{i=1}^{|\bs\Phi_{j+1}^{\tau,(N)}|}
  (\bs Q_{j,\Sigma}^\tau)_{i,\ell}\varphi_{j+1,i}^{\tau,(N)}.
\]

The associated moment matrix particularly satisfies
\[
  \Big[ \bs{\Phi}^{\tau,(N)}_j(\bs x^{\bs{\alpha}_i}),
  \bs{\Sigma}^{\tau,(N)}_j(\bs x^{\bs{\alpha}_i}) \Big]_{i=1}^{|\Lambda_k|}
=
\bs{M}^\tau_{j+1}[ \bs{Q}^\tau_{j,\Phi},\bs{Q}^\tau_{j,\Sigma}]
=
(\bs{R}^\tau_j)^{\intercal}.
\]

As \((\bs{R}^\tau_j)^{\intercal}\) is a lower triangular matrix, the first \(\ell-1\) 
entries in its \(\ell\)-th column are zero. This corresponds to \(\ell-1\) vanishing
moments for the \(\ell\)-th distribution generated by the transformation
\(\bs Q^{\tau}_{j} = [\bs{Q}^\tau_{j,\Phi},\bs{Q}^\tau_{j,\Sigma}]\).
Therefore, considering the first \(|\Lambda_k|\) distributions as scaling 
distributions,
we obtain indeed samplets with at least vanishing moments of degree \(k\).

We collect the filter coefficients of all samplets and of the scaling distributions at 
level \(0\) in the columns of the matrix \({\bs U}\in\Rbb^{N\times N}\) with respect to
some global level-wise ordering, such that 
\begin{equation}\label{eq:sampGloRep}
  \sigma_i^{(N)}=\big[\widehat\delta_{{\bs x}_1},\ldots,
\widehat\delta_{{\bs x}_N}\big]{\bs u}_i,
\end{equation}
where \({\bs u}_i\) is the \(i\)-th column of \({\bs U}\). In particular, there holds
\({\bs U}{\bs U}^\intercal={\bs U}^\intercal{\bs U}={\bs I}\), since the supports of 
any two columns are either disjoint or their entries are obtained from a sequence 
of orthogonal matrix products, rendering them orthogonal. Therefore, we obtain
\begin{align*}
\Big(\big(\big[\widehat\delta_{{\bs x}_1},\ldots,
\widehat\delta_{{\bs x}_N}\big]{\bs U}\big)^\intercal,
\big[\widehat\delta_{{\bs x}_1},\ldots,
\widehat\delta_{{\bs x}_N}\big]{\bs U}\Big)_{\Xcal'}
={\bs U}^\intercal\big[\big(\widehat\delta_{{\bs x}_i},
    \widehat\delta_{{\bs x}_j})_{\Xcal'}]_{i,j=1}^N
{\bs U}={\bs U}^\intercal{\bs I}{\bs U}={\bs I}
\end{align*}
which shows the orthogonality of the samplet basis.
With the aid of the matrix \({\bs U}\),
the vanishing moment condition can be written as
\[
  \sigma_{i}^{(N)}(p) = ({\bs u}_i,S^\ast_Np)_{\Rbb^N}=0
\quad \text{for all } p \in \Pcal_k.
\]

\section{Asymptotics of samplets and broken polynomials}
\label{sec:orth_pol_limits}
\subsection{Convergence of local Gram matrices}
The asymptotic analysis of samplets requires the convergence of discrete
orthonormal polynomials to their continuous counterparts. 
To study this convergence, we assume that the cluster tree underlying the
samplets is generated from a hierarchical dyadic partition of the set \(D\).
In the present
setting, however, the relevant hierarchy is not introduced directly from the
sample set \(X_N\), but rather from a geometric partition of the set \(D\)
itself.
At each level \(j\), we consider a family of pairwise
disjoint Borel sets with positive \(\Pbb\) measure
\[
\mathcal D_j=\{D_\tau:\tau\in\Tcal_j\},
\]
whose union is \(D\), and such that each set \(D_\tau\in\mathcal D_j\),
\(j<J\), is the disjoint union of its two children
\(D_{\tau_1},D_{\tau_2}\in\mathcal D_{j+1}\). 
In practice, one may think of \(D_\tau\) as the intersection 
of \(D\) with one of the axis-aligned boxes obtained by recursively b
isecting the bounding box of \(D\). Once the partition is fixed, one draws the
sample \(X_N=\{\bs x_1,\ldots,\bs x_N\}\subset D\).
Then, partitioning \(X_N\)
with respect to the hierarchy of partitioning sets, it is clear that 
we obtain a cluster tree for
\(X_N\). Vice versa, the sets \(\{D_\tau\}_{\tau\in\Tcal}\) 
serve as the geometric bounding regions of the resulting cluster tree.

For the asymptotic analysis, we fix a node \(\tau\in\Tcal_J\) and
observe that \(|\tau|\xrightarrow{N\to\infty}\infty\), since we assume
\(\Pbb(D_\tau)>0\).
Next, let \(\Pbb|_{D_\tau}\) denote the restriction of \(\Pbb\) to \(D_\tau\)
and \(\widehat{\Pbb}_{N}^{\tau}\) the cluster-restricted empirical measure
defined as
\begin{equation}\label{eq:mu_N_nu}
	\widehat{\Pbb}_{N}^{\tau}
	\isdef
	\frac1N\sum_{\bs x_i\in \tau}\delta_{\bs x_i}.
\end{equation}
Then, as a consequence of Theorem~\ref{Varadarajan}, there holds
\begin{equation}\label{eq:cluster_assumption}
	\widehat{\Pbb}_{N}^{\tau}\xrightharpoonup{N\to\infty} \Pbb|_{D_\tau}.
\end{equation}
It is clear that \(\widehat{\Pbb}_{N}^{\tau}\) is in general no
probability measure, since
\[
\widehat{\Pbb}_{N}^{\tau}(D)
=
\widehat{\Pbb}_{N}^{\tau}(D_\tau)
=
\frac{|\tau|}{N}.
\]
Analogously to~\eqref{defMuX}, for \(f,g\in\Ccal(D)\), the 
\(\widehat{\Pbb}_{N}^{\tau}\)-inner product is given by
\begin{equation}\label{defmuNnu}
	(f,g)_{\widehat{\Pbb}_{N}^{\tau}}
	=
	\frac1N\sum_{\bs x_i \in \tau} f(\bs x_i)g(\bs x_i)
\end{equation}
and we denote the corresponding norm by
\[
\|f\|_{\widehat{\Pbb}_{N}^{\tau}}
\isdef
(f,f)_{\widehat{\Pbb}_{N}^{\tau}}^{1/2}.
\]
Especially, there now holds
\[
(1,1)_{\widehat{\Pbb}_{N}^{\tau}}=\frac{|\tau|}{N}.
\]
It will be convenient to keep the global
identification of \(\Xcal'\) with \(\Rbb^N\)
introduced in Subsection~\ref{subsec:BFS}. Letting
\(\mathbbm{1}_{D_\tau}\) denote the
indicator function of \(D_\tau\), we have
\[
(f,g)_{\widehat{\Pbb}_{N}^{\tau}}
=
(f\mathbbm{1}_{D_\tau},g\mathbbm{1}_{D_\tau})_{\widehat{\Pbb}_N}
=
\big(S_N^\ast(f\mathbbm{1}_{D_\tau}),
S_N^\ast(g\mathbbm{1}_{D_\tau})\big)_{\Rbb^N}.
\]
This way, all cluster-wise defined quantities are considered to be
extended by zero to the ambient space. In the same spirit, we introduce
the restricted polynomial space
\begin{equation}\label{eq:defPkDtau}
	\Pcal_k^\tau\isdef \{p\mathbbm{1}_{D_\tau}:p\in\Pcal_k\}.
\end{equation}
All orthogonality and orthonormality statements below are understood with
respect to either \((\cdot,\cdot)_{\widehat{\Pbb}_{N}^{\tau}}\) on
\(\Pcal_k^\tau\) or \((\cdot,\cdot)_{\Pbb|_{D_\tau}}\) on
\(L^2(D_\tau,\Pbb|_{D_\tau})\).

In line with Lemma~\ref{ass:nondeg}, we have the following non-degeneracy
statement.
\begin{lemma}\label{lemma:constpol}
	Assume that \(|\tau|\ge |\Lambda_k|\). Then, the map
	\[
	\Pcal_k^\tau\to\Rbb^N,
	\quad
	p\mapsto S_N^\ast p,
	\]
	is \(\Pbb^{\otimes N}\)-almost surely injective on \(\Pcal_k^\tau\). 
  In particular, any \(p\in\Pcal_k^\tau\) is uniquely determined by its
  values at the sample points in \(\tau\).
\end{lemma}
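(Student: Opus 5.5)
The plan is to reduce the statement to the nondegeneracy argument of Lemma~\ref{ass:nondeg}, applied to the local Gram matrix of the cluster. Let \(m\isdef|\tau|\) and denote the sample points in \(\tau\) by \(\bs x_{i_1},\ldots,\bs x_{i_m}\). For \(p\mathbbm{1}_{D_\tau}\in\Pcal_k^\tau\), the vector \(S_N^\ast(p\mathbbm{1}_{D_\tau})\) has entries \(p(\bs x_{i_r})/\sqrt N\) at the indices \(i_r\) and zero elsewhere. Injectivity on \(\Pcal_k^\tau\) is therefore equivalent to the following: \(p(\bs x_{i_r})=0\) for all \(r\) implies \(p=0\) on \(D_\tau\), and hence \(p\mathbbm{1}_{D_\tau}=0\). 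In Gram-matrix form, this is positive definiteness of the local Gram matrix \(\bs G_k^{\tau,(N)}=[(\bs x^{\bs\alpha_i},\bs x^{\bs\alpha_j})_{\widehat\Pbb_N^\tau}]_{i,j}\), which equals \(\frac{m}{N}\) times the empirical Gram matrix built from the \(m\) points in \(\tau\).

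The first step is to condition on the cluster assignment. Conditionally on the event that exactly the indices \(i_1,\ldots,i_m\) fall into \(D_\tau\), these points are independent with law \(\Pbb_\tau\isdef\Pbb|_{D_\tau}/\Pbb(D_\tau)\). This law is absolutely continuous with respect to Lebesgue measure, with density positive on \(D_\tau\). The second step is to show that the polynomial \(Q(\bs y_1,\ldots,\bs y_m)=\det\big(\sum_{r=1}^m \bs v(\bs y_r)\bs v(\bs y_r)^\intercal\big)\), where \(\bs v(\bs y)=[\bs y^{\bs\alpha_i}]_i\), is not the zero polynomial on \((\Rbb^d)^m\). By Lemma~\ref{lemmaWendland} with \(N\) replaced by \(m\ge|\Lambda_k|\), a unisolvent set of \(m\) points exists, and at that set \(Q>0\). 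Then \(Z_Q\) has Lebesgue measure zero in \((\Rbb^d)^m\), so \(\Pbb_\tau^{\otimes m}(Z_Q)=0\) by absolute continuity. The third step is to sum over the finitely many possible index sets \(\{i_1,\ldots,i_m\}\), weighted by their probabilities. This shows that \(\Pbb^{\otimes N}\) assigns probability zero to the event that \(|\tau|\ge|\Lambda_k|\) and the local Gram matrix is singular.

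The final step concludes. If \(\bs G_k^{\tau,(N)}\) is positive definite and \(S_N^\ast(p\mathbbm 1_{D_\tau})=0\) with \(p=\sum_i c_i\bs x^{\bs\alpha_i}\), then \(\bs c^\intercal\bs G_k^{\tau,(N)}\bs c=\|p\|^2_{\widehat\Pbb_N^\tau}=0\). Hence \(\bs c=0\), so \(p=0\) and in particular \(p\mathbbm 1_{D_\tau}=0\). The uniqueness claim follows at once from linearity of \(S_N^\ast\).

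The main obstacle is that the cluster size \(|\tau|\) is itself random, so the points in \(\tau\) are not a fixed-size i.i.d.\ sample from \(\Pbb\). The conditioning in the first step handles this. A simpler alternative is to observe that the bad event is contained in \(\{(\bs x_1,\ldots,\bs x_N): \text{some } S\subset\{1,\ldots,N\},\ |S|\ge|\Lambda_k|, \text{ has } \det \bs G(\{\bs x_i\}_{i\in S})=0\}\). This is a finite union of zero sets of nonzero polynomials, each of which is a \(\Pbb^{\otimes N}\)-null set by the argument of Lemma~\ref{ass:nondeg}. I would use this route because it avoids conditioning altogether.
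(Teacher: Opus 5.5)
Your proof is correct and follows the paper's route: the paper likewise reduces injectivity to invertibility of the local Gram matrix \(\bs G_{k,\tau}^{(N)}=\frac1N\bs V_\tau^\intercal\bs V_\tau\) and then simply appeals to ``the same argument as in Lemma~\ref{ass:nondeg}''. What you add is an explicit treatment of the fact that the set of sample indices falling into \(D_\tau\) is random (by conditioning, or more simply by the finite union over index subsets \(S\) with \(|S|\ge|\Lambda_k|\)), which the paper leaves implicit; your union argument in fact shows that almost surely every subset of at least \(|\Lambda_k|\) sample points is unisolvent, independently of the partition, and so also gives Corollary~\ref{corollario2} directly.
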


\begin{proof}
  Let \(p\in\Pcal_k^\tau\) and assume that \(S_N^\ast p={\bs 0}\in\Rbb^N\).
  Choose a polynomial \(q\in\Pcal_k\) such that
	\(p=q|_{\tau}\). Writing
	\[
	q(\bs x)=\sum_{j=1}^{|\Lambda_k|} c_j \bs x^{\bs\alpha_j}
	\]
  and noticing \(p({\bs x}_i)=q({\bs x}_i)=0\) for all \({\bs x}_i\in\tau\),
  we obtain the linear system
  \begin{equation}\label{eq:homVandermonde}
    \bs V_\tau \bs c={\bs 0},
  \end{equation}
  where \(\bs c=[c_j]_{j=1}^{|\Lambda_k|}\in\Rbb^{|\Lambda_k|}\) and
	\(\bs V_\tau\in\Rbb^{|\tau|\times |\Lambda_k|}\) is the generalized
  Vandermonde matrix associated with the points in \(\tau\), i.e.,
	\[
    \bs V_\tau=[\bs x_{i}^{\bs\alpha_j}
    ]_{\gfrac{i=1,\ldots|\tau|}{j=1,\ldots,|\Lambda_k|}}.
	\]
	The condition~\eqref{eq:homVandermonde} implies that
  \[
    \bs G_{k,\tau}^{(N)}\bs c={\bs 0},
	\quad
	\bs G_{k,\tau}^{(N)}\isdef \frac1N\bs V_\tau^\intercal \bs V_\tau.
	\]
	By the same argument as in Lemma~\ref{ass:nondeg}, the matrix
	\(\bs G_{k,\tau}^{(N)}\) is almost surely invertible. Hence,
  there holds \(\bs c={\bs 0}\), such that
	\(q=0\) and therefore \(p=0\) on \(D_\tau\).
\end{proof}

We immediately obtain the cluster-wise analogue of
Corollary~\ref{corollario}.

\begin{corollary}\label{corollario2}
	Assume that \(|\tau|\ge |\Lambda_k|\) for every leaf node
  \(\tau\in\Lcal(\Tcal)\). Then
	\[
	\Pbb^{\otimes N}\big(\text{\(\bs G_{k,\tau}^{(N)}\) \textnormal{is invertible
  for every leaf} \(\tau\)}\big)=1.
	\]
	Consequently,
	\[
	\Pbb^\infty\big(\textnormal{for every \(N\) sufficiently large, 
  \(\bs G_{k,\tau}^{(N)}\) is invertible for every leaf \(\tau\)}\big)=1.
	\]
\end{corollary}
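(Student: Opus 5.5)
Both parts follow from Lemma~\ref{lemma:constpol} and a countable-union argument, in the spirit of Corollary~\ref{corollario}. The first assertion is slightly subtle because the cluster sizes \(|\tau|\) are random. I will therefore read the hypothesis as a restriction to the event on which \(|\tau|\ge|\Lambda_k|\) holds for every leaf, and show that invertibility fails only on a null subset of that event.

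\textbf{First assertion.} The leaves are finitely many, fixed by the geometric partition \(\{D_\tau\}\). For a single leaf \(\tau\), write \(\det\bs G_{k,\tau}^{(N)}=\det\big(\tfrac1N\bs V_\tau^\intercal\bs V_\tau\big)\). By Cauchy--Binet this equals \(N^{-|\Lambda_k|}\) times the sum, over all \(|\Lambda_k|\)-element subsets \(I\) of the points lying in \(D_\tau\), of the squared determinants of the corresponding square Vandermonde minors.

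Hence \(\det\bs G_{k,\tau}^{(N)}=0\) forces every such minor to vanish. When at least \(|\Lambda_k|\) points lie in \(D_\tau\), it forces in particular the minor for one specific index set \(I\subset\{1,\dots,N\}\) with \(|I|=|\Lambda_k|\) and \(\bs x_i\in D_\tau\) for all \(i\in I\).

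Fix such an \(I\). The map \((\bs x_i)_{i\in I}\mapsto\det[\bs x_i^{\bs\alpha_j}]\) is a polynomial on \(\Rbb^{d|I|}\). It is not identically zero, by Lemma~\ref{lemmaWendland}: choose a unisolvent set of exactly \(|\Lambda_k|\) points. Its zero set is therefore Lebesgue-null, as in the proof of Lemma~\ref{ass:nondeg}.

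Since \(\Pbb\) is absolutely continuous, the event \(\{\bs x_i\in D_\tau\ \forall i\in I\}\cap\{\text{minor}_I=0\}\) is \(\Pbb^{\otimes N}\)-null. There are finitely many pairs \((I,\tau)\), so the union of these null events is null. Thus, outside a null set, every leaf containing at least \(|\Lambda_k|\) points has an invertible \(\bs G_{k,\tau}^{(N)}\). This is precisely the first claim, and it is essentially Lemma~\ref{lemma:constpol} applied leafwise.

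\textbf{Second assertion.} The plan has two ingredients: \(|\tau|\ge|\Lambda_k|\) eventually, and invertibility at every \(N\) via a countable union.
\begin{itemize}
\item By the strong law of large numbers applied to \(\mathbbm 1_{D_\tau}\), or equivalently by \eqref{eq:cluster_assumption} (note \(\mathbbm 1_{D_\tau}\) need not be continuous, so I would use the SLLN directly), we have \(|\tau|/N\to\Pbb(D_\tau)>0\), \(\Pbb^\infty\)-almost surely. Hence almost surely there is a random \(N_0\) such that \(|\tau|\ge|\Lambda_k|\) for all leaves \(\tau\) and all \(N\ge N_0\).
\item For each \(N\), let \(B_N\) be the null set from the first part, lifted to the projective limit as \(B_N\times D\times D\times\cdots\). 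Then \(\Pbb^\infty(\bigcup_N B_N)=0\) by \(\sigma\)-subadditivity, exactly as in Corollary~\ref{corollario}.
\end{itemize}
Off the union of the SLLN exceptional set and \(\bigcup_N B_N\), every \(N\ge N_0\) has all leaves with at least \(|\Lambda_k|\) points and, by the first part, all \(\bs G_{k,\tau}^{(N)}\) invertible.

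The main obstacle is the bookkeeping of random cluster sizes. Lemma~\ref{lemma:constpol} is phrased with \(|\tau|\) treated as fixed, so I will handle this through the explicit enumeration of index sets \(I\) given above, rather than conditioning on \(|\tau|\).
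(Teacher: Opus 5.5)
Your proof is correct and follows the route the paper has in mind. The paper gives no separate proof; it simply combines Lemma~\ref{lemma:constpol} on each of the finitely many leaves with the countable-union argument of Corollary~\ref{corollario}. Your Cauchy--Binet reduction to deterministic index sets \(I\), together with the strong law of large numbers for \(|\tau|/N\), makes rigorous the bookkeeping of random cluster sizes that the paper leaves implicit: it treats \(|\tau|\) as fixed in Lemma~\ref{lemma:constpol} and takes \(|\tau|\to\infty\) from the weak convergence of \(\widehat{\Pbb}_N^{\tau}\).
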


In view of the preceding result, throughout the remainder of this section,
we may restrict ourselves to realizations for which all leaf-wise Gram
matrices are invertible.

\subsection{Samplets and broken orthonormal polynomials}
\label{subsec:qr_moments}
In the samplet construction, at the leaves of the cluster tree,
the scaling distributions are determined by a QR decomposition 
of the transpose of the moment matrices~\eqref{momentMatrix},
which coincide with the
generalized Vandermonde matrices
	\[
    \bs V_\tau=[\bs x_{i}^{\bs\alpha_j}
    ]_{\gfrac{i=1,\ldots|\tau|}{j=1,\ldots,|\Lambda_k|}},
    \quad\tau\in\Lcal(\Tcal).
	\]

As we only consider scaling distributions at the moment,
we may particularly start from the thin QR decomposition
\(	\bs V_\tau=\bs Q_\tau \bs R_\tau,
\) where \({\bs R}_\tau\in\Rbb^{|\Lambda_k|\times|\Lambda_k|}\)
is upper triangular and has full rank. Consequently, there
holds \(\bs V_\tau{\bs R}_\tau^{-1}={\bs Q}_\tau\), which yields
that the basis
\([{\bs x}^{\bs\alpha_1},\ldots,
{\bs x}^{\bs\alpha_{|\Lambda_k|}}]{\bs R}_\tau^{-1}\) 
is orthogonal, since
\begin{align*}
\Big(
  \big([{\bs x}^{\bs\alpha_1},\ldots,
{\bs x}^{\bs\alpha_{|\Lambda_k|}}]{\bs R}_\tau^{-1}\big)^\intercal&,
[{\bs x}^{\bs\alpha_1},\ldots,
{\bs x}^{\bs\alpha_{|\Lambda_k|}}]
{\bs R}_\tau^{-1}\Big)_{\widehat{\Pbb}_N^\tau}\\
&=\frac{1}{N}{\bs R}_\tau^{-\intercal}{\bs V}_\tau^\intercal
{\bs V}_\tau{\bs R}_\tau^{-1}
=\frac{1}{N}{\bs Q}_\tau^\intercal{\bs Q}_\tau
=\frac{1}{N}{\bs I}.
\end{align*}
The basis becomes orthonormal if we apply the change of basis
\(\sqrt{N}{\bs R}_\tau^{-1}\) instead, i.e.,
\[
\big[\widehat\pi_{1}^{\tau,(N)},\ldots,\widehat\pi_{|\Lambda_k|}^{\tau,(N)}\big] 
=\sqrt{N}[{\bs x}^{\bs\alpha_1},\ldots,{\bs x}^{\bs\alpha_{|\Lambda_k|}}]
{\bs R}_\tau^{-1}.
\]

Considering the restrictions of
\(\widehat\pi_{1}^{\tau,(N)},\ldots,\widehat\pi_{|\Lambda_k|}^{\tau,(N)}\) 
to \(\Pcal_k^\tau\),
we particularly find
\begin{align*}
  {\bs\Phi}_J^{\tau,(N)}=F_N\big[\widehat\pi_{1}^{\tau,(N)},
  \ldots,\widehat\pi_{|\Lambda_k|}^{\tau,(N)}\big]  
  =\big[\widehat\delta_{{\bs x}_{i_1}},\ldots,
  \widehat\delta_{{\bs x}_{i_{|\tau|}}}\big]{\bs Q}_\tau
\end{align*}
with
\[
  \Big(\big({\bs\Phi}_J^{\tau,(N)}\big)^\intercal,
  {\bs\Phi}_J^{\tau,(N)}\Big)_{\Xcal'}
  ={\bs I}.
\]
This means that the scaling distributions are obtained by applying
the frame operator \(F_N\) to the discrete orthonormal polynomials
in \(\Pcal_k^\tau\). Vice versa, the discrete orthonormal polynomials
are \(\Pbb\)-almost surely uniquely determined by the 
scaling distributions due to the unisolvency of the points in \(\tau\),
see Corollary~\ref{corollario2}. 

Based on this observation, the convergence of the scaling distributions
at the leaves of the cluster tree directly follows from the convergence
of the corresponding discrete orthonormal polynomials. We have the following
cluster-wise convergence statement, which directly follows from
Proposition~\ref{lem:ort_pol_conv_multi} by rescaling.

\begin{proposition}\label{th:clusterwise_conv}
	Let	\(\widehat\pi_{1}^{\tau,(N)},\ldots,
  \widehat\pi_{|\Lambda_k|}^{\tau,(N)}\) be the
	discrete orthonormal polynomials associated to
	\(\widehat{\Pbb}_{N}^{\tau}\) and let
	\(\widehat\pi_{1}^{\tau},\ldots,\widehat\pi_{|\Lambda_k|}^{\tau}\) be the
	orthonormal polynomials associated to \(\Pbb|_{D_\tau}\), both constructed
	with respect to graded lexicographically ordered monomial basis. Then, 
  for each fixed index \(i\), there holds
	\[
	\widehat{\pi}_{i}^{\tau,(N)}
	\xrightarrow{N\to\infty}
	\widehat{\pi}_{i}^{\tau}
	\]
	uniformly on \(D_\tau\).
\end{proposition}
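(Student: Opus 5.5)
The plan is to reduce the cluster-wise statement to Proposition~\ref{lem:ort_pol_conv_multi} by working with the restricted measure and then renormalizing. All statements are understood \(\Pbb^\infty\)-almost surely. We work on realizations for which \(\bs G_{k,\tau}^{(N)}\) is invertible for all large \(N\) (Corollary~\ref{corollario2}) and for which \eqref{eq:cluster_assumption} holds.

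First, we identify the discrete orthonormal polynomials with a Gram--Schmidt procedure. By construction, \(\widehat\pi_i^{\tau,(N)}=\sqrt N[{\bs x}^{\bs\alpha_1},\ldots]{\bs R}_\tau^{-1}{\bs e}_i\) is obtained from the monomials by an upper triangular change of basis and is orthonormal in \((\cdot,\cdot)_{\widehat\Pbb_N^\tau}\). By uniqueness of the Gram--Schmidt process (up to sign), \(\widehat\pi_i^{\tau,(N)}=\pi_{\bs\alpha_i}^{\tau,(N)}/\|\pi_{\bs\alpha_i}^{\tau,(N)}\|_{\widehat\Pbb_N^\tau}\). Here \(\pi^{\tau,(N)}_{\bs\alpha_i}\) is the monic orthogonal polynomial with respect to \(\widehat\Pbb_N^\tau\). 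We fix the sign by requiring positive diagonal of \({\bs R}_\tau\). The same holds for \(\widehat\pi_i^\tau\) with respect to \(\Pbb|_{D_\tau}\), with positive leading coefficient.

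Second, we prove convergence of the monic polynomials. The coefficients \(\bs\ell_i^{(N)}\) solve \(\bs G_{i-1,\tau}^{(N)}\bs\ell_i^{(N)}=-\bs g_{i,\tau}^{(N)}\), whose entries are \((1,{\bs x}^{\bs\alpha_r+\bs\alpha_s})_{\widehat\Pbb_N^\tau}\). The step needing care is the passage to the limit, since \(\mathbbm 1_{D_\tau}{\bs x}^{\bs\alpha}\) is not continuous, so Theorem~\ref{Varadarajan} does not apply verbatim. We have two options. One is to use the strong law of large numbers directly for the countably many bounded integrands \(\mathbbm 1_{D_\tau}{\bs x}^{\bs\alpha}\), \(|\bs\alpha|\le 2k\). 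The other is to invoke \eqref{eq:cluster_assumption} via the portmanteau theorem, assuming \(\Pbb(\partial D_\tau)=0\), which holds for box partitions since \(\Pbb\) is absolutely continuous. I would use the SLLN, as it needs no boundary assumption. Either way, \(\bs G^{(N)}_{k,\tau}\to\bs G_{k,\tau}\isdef[({\bs x}^{\bs\alpha_r},{\bs x}^{\bs\alpha_s})_{\Pbb|_{D_\tau}}]\) almost surely. The limit matrix is positive definite by the argument of Remark~\ref{Rem:positive_emp_gr}, because \(\Pbb(D_\tau)>0\) and the density is positive, so a polynomial vanishing \(\Pbb|_{D_\tau}\)-a.e. vanishes on a set of positive Lebesgue measure. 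Continuity of inversion then gives \(\bs\ell_i^{(N)}\to\bs\ell_i\), and boundedness of the monomials on compact \(D\) gives uniform convergence \(\pi^{\tau,(N)}_{\bs\alpha_i}\to\pi^\tau_{\bs\alpha_i}\) on \(D_\tau\), exactly as in Proposition~\ref{lem:ort_pol_conv_multi}.

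Third, we renormalize. We have \(\|\pi^{\tau,(N)}_{\bs\alpha_i}\|^2_{\widehat\Pbb_N^\tau}=\bs v_N^\intercal\bs G_{k,\tau}^{(N)}\bs v_N\), where \(\bs v_N\) is the coefficient vector, and this converges to \(\|\pi^\tau_{\bs\alpha_i}\|^2_{\Pbb|_{D_\tau}}>0\) (the analogue of Corollary~\ref{cor:norm_conv_multi}). Dividing, uniform convergence is preserved because the limit norm is bounded away from zero. The "rescaling" remark is that \(\widehat\Pbb_N^\tau\) has total mass \(|\tau|/N\to\Pbb(D_\tau)\). Consequently, normalizing against \(\Pbb|_{D_\tau}\) (not the conditional probability) is consistent with the \(1/N\) weight in \eqref{defmuNnu}, so no extra factor appears.
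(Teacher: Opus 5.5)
Your proposal is correct and follows the same route as the paper. The paper only states that the result follows from Proposition~\ref{lem:ort_pol_conv_multi} by rescaling, and you carry out that reduction (convergence of the local Gram matrices, continuity of inversion for the monic Gram--Schmidt coefficients, then normalization) directly for \(\widehat{\Pbb}_N^\tau\) and \(\Pbb|_{D_\tau}\), adding two details the paper leaves implicit: a positive-diagonal sign convention for \(\bs R_\tau\), and the strong law of large numbers for the discontinuous integrands \(\mathbbm{1}_{D_\tau}\bs x^{\bs\alpha}\), which avoids the tacit assumption \(\Pbb(\partial D_\tau)=0\) behind~\eqref{eq:cluster_assumption}.
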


In particular, by Theorem~\ref{Varadarajan} and
Proposition~\ref{th:clusterwise_conv}
we obtain, for each fixed $\tau,i$,
\begin{equation}\label{eq:cluster-limit}
  \frac{1}{N} F_N \widehat{\pi}_{i}^{\tau,(N)}
  \xrightharpoonup{N\to\infty} \varphi_{i}^{\tau}
  \quad\mathbb{P}\text{-almost surely},
\end{equation}
where $\varphi_{i}^{\tau}$ is the finite signed measure defined by
\begin{equation}\label{eq:varphi-def}
  \varphi_{i}^{\tau}(f)
  \isdef
  \int_D f\widehat{\pi}_{i}^{\tau}\d\mathbb{P},
  \quad f \in\Ccal(D).
\end{equation}
Equivalently, we write $\varphi_{\tau,i} =
\widehat{\pi}_{\tau,i}\mathbb{P}$.
This convergence justifies that we focus on
orthonormal polynomials taking the role of scaling functions
in what follows.

\subsection{Coarsening of broken orthonormal polynomials}
\label{subsec:combine_children}

We next show that the local coarsening step in the refinement relation
\eqref{eq:samplets_refinement} of the samplet construction
recombines the orthonormal polynomials of child clusters to orthonormal
polynomials in the parent cluster. Let \(j<J\) and let
\(\tau\in\Tcal_j\) have children \(\tau_1,\tau_2\in\Tcal_{j+1}\), such that
\(
D_\tau=D_{\tau_1}\cup D_{\tau_2}.
\)
We formulate orthogonality and orthonormality statements 
with respect to the restricted measure \(\Pbb|_{D_\tau}\) on
\(
\Pcal_k^\tau=\{p\mathbbm{1}_{D_\tau}:p\in\Pcal_k\}.
\)

Let
\[
\big\{\widehat\pi_i^{\tau_1}\big\}_{i=1}^{|\Lambda_k|}\subset \Pcal_k^{\tau_1}
\quad\text{and}\quad
\big\{\widehat\pi_i^{\tau_2}\big\}_{i=1}^{|\Lambda_k|}\subset \Pcal_k^{\tau_2}
\]
be the orthonormal polynomials in \(\Pcal_k^{\tau_1}\) and \(\Pcal_k^{\tau_2}\), 
respectively. We consider the space of piecewise polynomials
\[
\Pcal_{k,\mathrm{pw}}^{\tau_1\cup\tau_2}
\isdef\{p_1+p_2: p_1\in \Pcal_k^{\tau_1}, p_2\in \Pcal_k^{\tau_1}\}
\supset\Pcal_k^\tau.
\]
Obviously,
\[
\{\widehat{\pi}^{\tau_1\cup\tau_2}_i\}_{i=1}^{2|\Lambda_k|}
=\{\widehat\pi_i^{\tau_1}\}_{i=1}^{|\Lambda_k|}
\cup\{\widehat\pi_i^{\tau_2}\}_{i=1}^{|\Lambda_k|}
\]
forms an \(L^2(D,\Pbb)\)-orthonormal basis for 
\(\Pcal_{k,\mathrm{pw}}^{\tau_1\cup\tau_2}\).
Herein, we use the graded lexicographic order on each child cluster
and set all basis elements of the first child cluster before
those of the second child cluster. The
moment matrix of \(\tau\) is then given by
\begin{equation}\label{eq:moment_matrix_cont}
	\bs M_j^\tau
	\isdef
	\big[
	(\bs x^{\bs\alpha_i},\widehat{\pi}^{\tau_1\cup\tau_2}_\ell)_{\Pbb|_{D_\tau}}
	\big]_{\substack{i=1,\ldots,|\Lambda_k|\\ \ell=1,\ldots,2|\Lambda_k|}}
	\in\Rbb^{|\Lambda_k|\times 2|\Lambda_k|}.
\end{equation}

\begin{theorem}\label{th:orth_basis}
	Let \((\bs M_j^\tau)^\intercal=\bs Q_j^\tau\bs R_j^\tau\) be the QR decomposition
	of the transposed moment matrix \((\bs M_j^\tau)^\intercal\)
	with \(\bs Q_j^\tau\in\Rbb^{2|\Lambda_k|\times 2|\Lambda_k|}\), 
	\(\bs R_j^\tau\in\Rbb^{2|\Lambda_k|\times |\Lambda_k|}\). If we partition
	\(
	\bs Q_j^\tau=[\bs Q_{j,\bs\Phi}^\tau,\bs Q_{j,\bs\Sigma}^\tau],
	\)
	where
	\(
	\bs Q_{j,\bs\Phi}^\tau\in\Rbb^{2|\Lambda_k|\times |\Lambda_k|},
	\
	\bs Q_{j,\bs\Sigma}^\tau\in\Rbb^{2|\Lambda_k|\times |\Lambda_k|},
	\)
	then the family
	\begin{equation}\label{eq:def_phi_tau}
		\widetilde{\bs\Phi}_j^\tau
		\isdef
		\{\widehat{\pi}^\tau_i\}_{i=1}^{|\Lambda_k|},
		\quad
		\widehat{\pi}^\tau_i
		\isdef
		\sum_{\ell=1}^{2|\Lambda_k|}
		(\bs Q_{j,\bs\Phi}^\tau)_{\ell,i}\widehat{\pi}^{\tau_1\cup\tau_2}_\ell,
	\end{equation}
	is an orthonormal basis of \(\Pcal_k^\tau\), while the family
	\begin{equation}\label{eq:def_sigma_tau}
		\widetilde{\bs\Sigma}_j^\tau
		\isdef
		\{\widetilde{\sigma}_{i}^\tau\}_{i=1}^{|\Lambda_k|},
		\quad
		\widetilde{\sigma}_{i}^\tau
		\isdef
		\sum_{\ell=1}^{2|\Lambda_k|}
		(\bs Q_{j,\bs\Sigma}^\tau)_{\ell,i}\widehat{\pi}^{\tau_1\cup\tau_2}_\ell,
	\end{equation}
	is orthonormal in \(L^2(D_\tau,\Pbb|_{D_\tau})\) and satisfies the vanishing
	moment condition
	\[
	(\bs x^{\bs\alpha_i},\widetilde{\sigma}_{j}^\tau)_{\Pbb|_{D_\tau}}=0,
	\quad
	i,j=1,\ldots,|\Lambda_k|.
	\]
\end{theorem}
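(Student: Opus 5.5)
The plan is to work entirely in coordinates with respect to the orthonormal basis \(\{\widehat{\pi}^{\tau_1\cup\tau_2}_\ell\}_{\ell=1}^{2|\Lambda_k|}\) of \(\Pcal_{k,\mathrm{pw}}^{\tau_1\cup\tau_2}\), which is isometric to \(\Rbb^{2|\Lambda_k|}\) with the Euclidean inner product. Under this isometry, any family \(\sum_\ell a_{\ell,i}\widehat{\pi}^{\tau_1\cup\tau_2}_\ell\) has Gram matrix \(\bs A^\intercal\bs A\). Since \(\bs Q_j^\tau\) is orthogonal, the full family \(\widetilde{\bs\Phi}_j^\tau\cup\widetilde{\bs\Sigma}_j^\tau\) is therefore orthonormal in \(L^2(D_\tau,\Pbb|_{D_\tau})\) and spans \(\Pcal_{k,\mathrm{pw}}^{\tau_1\cup\tau_2}\). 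This immediately gives orthonormality of both families and their mutual orthogonality.

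Next I compute moments. For a family with coefficient matrix \(\bs A\), the moment matrix \([(\bs x^{\bs\alpha_i},\cdot)]\) equals \(\bs M_j^\tau\bs A\) by linearity. Taking \(\bs A=\bs Q_j^\tau\) gives \(\bs M_j^\tau\bs Q_j^\tau=(\bs R_j^\tau)^\intercal\). Because \(\bs R_j^\tau\in\Rbb^{2|\Lambda_k|\times|\Lambda_k|}\) is upper triangular, its last \(|\Lambda_k|\) rows vanish. Hence the last \(|\Lambda_k|\) columns of \((\bs R_j^\tau)^\intercal\) are zero, which is exactly \(\bs M_j^\tau\bs Q_{j,\bs\Sigma}^\tau=\bs 0\), i.e. the vanishing moment condition for \(\widetilde{\sigma}_i^\tau\).

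It remains to show that \(\widetilde{\bs\Phi}_j^\tau\) spans \(\Pcal_k^\tau\); this is the main step. Since \(\Pcal_k^\tau\subset\Pcal_{k,\mathrm{pw}}^{\tau_1\cup\tau_2}\) and \(\bs x^{\bs\alpha_i}\mathbbm{1}_{D_\tau}\) lies in this space, the \(i\)-th row of \(\bs M_j^\tau\) is the coefficient vector of \(\bs x^{\bs\alpha_i}\mathbbm{1}_{D_\tau}\) in the orthonormal basis. Thus \(\Pcal_k^\tau\) corresponds to the column space of \((\bs M_j^\tau)^\intercal\) under the isometry. By Remark~\ref{Rem:positive_emp_gr}, applied to \(\Pbb|_{D_\tau}\) with positive density on a set of positive measure, the monomials restricted to \(D_\tau\) are linearly independent. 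Hence \((\bs M_j^\tau)^\intercal\) has full column rank \(|\Lambda_k|\), and \(\bs R_j^\tau\) has an invertible upper \(|\Lambda_k|\times|\Lambda_k|\) block \(\bs R_0\). Then \((\bs M_j^\tau)^\intercal=\bs Q_{j,\bs\Phi}^\tau\bs R_0\), so the column spaces of \((\bs M_j^\tau)^\intercal\) and \(\bs Q_{j,\bs\Phi}^\tau\) coincide. Consequently \(\spn\widetilde{\bs\Phi}_j^\tau=\Pcal_k^\tau\), and by orthonormality and a dimension count it is an orthonormal basis.

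Finally, \(\widetilde{\bs\Sigma}_j^\tau\) is the orthogonal complement of \(\Pcal_k^\tau\) in the piecewise space, which confirms the vanishing moments independently.
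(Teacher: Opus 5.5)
Your proposal is correct and follows essentially the same route as the paper: both identify \(\Pcal_{k,\mathrm{pw}}^{\tau_1\cup\tau_2}\) with \(\Rbb^{2|\Lambda_k|}\) through the concatenated child orthonormal basis, obtain orthonormality from the orthogonality of \(\bs Q_j^\tau\), and split the space into the range of \((\bs M_j^\tau)^\intercal\) and the kernel of \(\bs M_j^\tau\). Your version is slightly more careful at two points: you get the vanishing moments directly from the zero lower block of \(\bs R_j^\tau\), and you show explicitly why the range equals \(\Pcal_k^\tau\) (the rows of \(\bs M_j^\tau\) are the coordinate vectors of \(\bs x^{\bs\alpha_i}\mathbbm{1}_{D_\tau}\), and these have full rank), whereas the paper only attributes that step to a dimension argument.
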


\begin{proof}
	Consider the analysis operator
	\[
	T^\tau\colon\Pcal_{k,\mathrm{pw}}^{\tau_1\cup\tau_2}\to\Rbb^{|\Lambda_k|},
	\quad
	T^\tau p
	\isdef
	[(\bs x^{\bs\alpha_i},p)_{\Pbb|_{D_\tau}}]_{i=1}^{|\Lambda_k|}.
	\]
	Identifying \(p\in\Pcal_{k,\mathrm{pw}}^{\tau_1\cup\tau_2}\) 
	with its coordinate vector with respect to the
	orthonormal basis \(\widehat{\pi}^{\tau_1\cup\tau_2}_i\), 
	\(i=1,\ldots,2|\Lambda_k|\) the operator
	\(T^\tau\) is represented by the moment matrix \(\bs M_j^\tau\) from
	\eqref{eq:moment_matrix_cont}.
	
	Since \(\Pcal_{k,\mathrm{pw}}^{\tau_1\cup\tau_2}\) 
	is finite-dimensional, there holds
	\[
	\Pcal_{k,\mathrm{pw}}^{\tau_1\cup\tau_2}
	=\ker(T^\tau)\overset{\perp}{\oplus}\ker(T^\tau)^\perp,
	\quad
	\ker(T^\tau)^\perp=\operatorname{range}\big((T^\tau)^\ast\big).
	\]
	Let \((\bs M_j^\tau)^\intercal=\bs Q_j^\tau\bs R_j^\tau\) be the
	QR decomposition with
	\(
	\bs Q_j^\tau=[\bs Q_{j,\bs\Phi}^\tau,\bs Q_{j,\bs\Sigma}^\tau].
	\)
	Then the columns of \(\bs Q_{j,\bs\Phi}^\tau\) form an orthonormal basis of
	\(\operatorname{range}\big(({\bs M}_j^\tau)^\intercal\big)
  =\ker(\bs M_j^\tau)^\perp\), while the
	columns of \(\bs Q_{j,\bs\Sigma}^\tau\) form an orthonormal basis of
	\(\ker(\bs M_\tau)\). Since \(\bs M_j^\tau\) is the matrix representation of
	\(T^\tau\), this gives
	\[
	\operatorname{span}\widetilde{\bs\Sigma}_j^\tau=\ker(T^\tau).
	\]
	Hence every \(\widetilde{\sigma}\in\widetilde{\bs\Sigma}_j^\tau\) satisfies
	\[
	(\bs x^{\bs\alpha_i},\widetilde{\sigma})_{\Pbb|_{D_\tau}}=0,
	\quad
	i=1,\ldots,|\Lambda_k|,
	\]
	which proves the vanishing moment condition. Since
	\(\widehat{\pi}^{\tau_1\cup\tau_2}_i\), 
	\(i=1,\ldots,2|\Lambda_k|\), is an orthonormal basis and
	\(\bs Q_j^\tau\) is orthogonal, both families
	\(\widetilde{\bs\Phi}_j^\tau\) and \(\widetilde{\bs\Sigma}_j^\tau\) are orthonormal.
	
	Since \(\Pcal_k^\tau\subset\Pcal_{k,\mathrm{pw}}^{\tau_1\cup\tau_2}\)
	and \(\dim\big(\Pcal_{k,\mathrm{pw}}^{\tau_1\cup\tau_2}\big)=2|\Lambda_k|\),
	by a dimension argument, there necessarily holds \(\ker(T^\tau)^\perp=\Pcal_k^\tau\).
	Furthermore,
	\(\widetilde{\bs\Phi}_j^\tau\) is an orthonormal basis in \(\Pcal_k^\tau\).
	From the QR decomposition, we finally infer that
	\(\widehat{\pi}^\tau_i\in
	\operatorname{span}\{{\bs x}^{\bs\alpha_1},\ldots,{\bs x}^{\bs\alpha_i}\}\).
\end{proof}

\begin{remark}
	Under the ordering convention adopted above, the matrix
	\(\bs Q_{j,\bs\Phi}^\tau\) inherits a block upper-triangular structure,
	reflecting the triangular structure of the underlying Gram-Schmidt process.
	For \(|\Lambda_k|=4\), this takes the form
	\[
	\bs Q_{j,\bs\Phi}^\tau =
	\begin{bmatrix}
		r^{\tau_1}_{1,1} & r^{\tau_1}_{1,2} & r^{\tau_1}_{1,3} & r^{\tau_1}_{1,4}\\
		0 & r^{\tau_1}_{2,2} & r^{\tau_1}_{2,3} & r^{\tau_1}_{2,4}\\
		0 & 0 & r^{\tau_1}_{3,3} & r^{\tau_1}_{3,4}\\
		0 & 0 & 0 & r^{\tau_1}_{4,4}\\
		r^{\tau_2}_{1,1} & r^{\tau_2}_{1,2} & r^{\tau_2}_{1,3} & r^{\tau_2}_{1,4}\\
		0 & r^{\tau_2}_{2,2} & r^{\tau_2}_{2,3} & r^{\tau_2}_{2,4}\\
		0 & 0 & r^{\tau_2}_{3,3} & r^{\tau_2}_{3,4}\\
		0 & 0 & 0 & r^{\tau_2}_{4,4}
	\end{bmatrix}.
	\]
\end{remark}

Theorem~\ref{th:orth_basis} shows that the local coarsening step in the
refinement relation~\eqref{eq:samplets_refinement} of the
samplet construction admits a continuous counterpart in terms of orthonormal
polynomials.
Rather than recomputing the orthonormal polynomial family on the parent cell
\(D_\tau\) from scratch, one may recover it from the two child families by
applying a QR decomposition to the associated moment matrix. In this sense,
the same hierarchical mechanism underlying the discrete refinement relation
\eqref{eq:samplets_refinement} persists at the functional level.

\subsection{Extension to downward index sets}
\label{subsec:downward_closed}
So far, our discussion has been formulated in terms of the total-degree
polynomial spaces \(\Pcal_k\), that is, the span of monomials
\(\bs x^{\bs\alpha}\) with \(|\bs\alpha| \le k\). From the perspective of
samplets, this corresponds to imposing the same number of vanishing moments
along every direction. In other words, the cancellation order is
isotropic across all variables. In many applications, however, it is more
natural to replace total-degree truncations by anisotropic index sets. This
situation arises, for instance, when different directions exhibit different
regularity properties, or when one seeks sparse polynomial approximations in
high dimensions, see, for example, \cite{CCS14}. Equivalently, one
prescribes a distinct number of vanishing moments in each coordinate direction.
A standard class of such index sets are the
\emph{downward closed} ones, see, for example, \cite{GG03}. 

\begin{definition}\label{def:downward_closed}
Let \(\Lambda\subset\Nbb_0^d\) be a finite set of multi-indices. We say that
\(\Lambda\) is \emph{downward closed} if for every \(\bs\alpha\in\Lambda\) and
every \(\bs\beta\in\Nbb_0^d\) such that \(\bs\beta\le\bs\alpha\)
component-wise, there holds \(\bs\beta\in\Lambda\).
	
Associated to \(\Lambda\), we define the polynomial space
	\(
	\Pcal_\Lambda\isdef\spn\{\bs x^{\bs\alpha}:\bs\alpha\in\Lambda\}.
	\)
\end{definition}

\begin{remark}\label{rem:td_tp_subcases}
	The total-degree space \(\Pcal_k\) corresponds to the index set
	\[
	\Lambda_k\isdef\{\bs\alpha\in\Nbb_0^d:\ |\bs\alpha|\le k\},
	\]
	while the tensor-product space of degree at most \(k\)	
  corresponds to the index set
	\[
	\Lambda^{\mathrm{tp}}_{k}\isdef
	\{\bs\alpha\in\Nbb_0^d:\alpha_i\le k\ \text{for all }i=1,\ldots,d\}.
	\]
	Both sets are downward closed. More generally, weighted total-degree sets
	and
	hyperbolic crosses fall into the same class. In particular,
  if \(\Lambda\) is downward closed, there exists an enumeration
	\(
	\Lambda=\{{\bs\alpha}_1,\ldots,{\bs\alpha}_{|\Lambda|}\}
	\)
	such that every prefix
	\(
	\Lambda_r\isdef\{{\bs\alpha}_1,\ldots,{\bs\alpha}_r\},
	\quad r=1,\ldots,|\Lambda|,
	\)
	is again downward closed.
\end{remark}

The constructions of
Subsections~\ref{subsec:gram_moment},~\ref{subsec:qr_moments}, and
\ref{subsec:combine_children} extend to downward closed index sets
and the corresponding polynomials spaces \(\Pcal_\Lambda\) with only 
notational modifications. More precisely, one simply replaces the
ordered monomial basis
\(
\{\bs x^{{\bs\alpha}_i}\}_{i=1}^{|\Lambda_k|}
\)
by
\(
\{\bs x^{{\bs\alpha}_i}\}_{i=1}^{|\Lambda|},
\)
and the total-degree space \(\Pcal_k\) by the space \(\Pcal_\Lambda\).
The corresponding Gram matrix is then given by
\[
{\bs G}_\Lambda
\isdef
\big[
({\bs x}^{{\bs\alpha}_i},{\bs x}^{{\bs\alpha}_j})_\Pbb
\big]_{i,j=1}^{|\Lambda|}
\in\Rbb^{|\Lambda|\times |\Lambda|},
\]
while, for a realization \(X_N\), the empirical Gram matrix becomes
\[
{\bs G}_\Lambda^{(N)}
\isdef
\big[
({\bs x}^{{\bs\alpha}_i},{\bs x}^{{\bs\alpha}_j})_{\widehat{\Pbb}_N}
\big]_{i,j=1}^{|\Lambda|}
\in\Rbb^{|\Lambda|\times |\Lambda|}.
\]
Under the same assumptions as before, \({\bs G}_\Lambda\) is positive definite,
and \({\bs G}_\Lambda^{(N)}\) is invertible \(\Pbb\)-almost surely whenever
\(N\ge |\Lambda|\). Moreover, by the same argument as in
Lemma~\ref{lem:moments_conv}, there holds
\[
{\bs G}_\Lambda^{(N)}\xrightarrow{N\to\infty} {\bs G}_\Lambda
\quad\text{entry-wise \(\Pbb\)-almost surely}.
\]
Consequently, the discrete orthogonal polynomials associated with
\(\widehat{\Pbb}_N\), obtained from the ordered monomials
\(\{\bs x^{{\bs\alpha}_i}\}_{i=1}^{|\Lambda|}\), converge uniformly on \(D\)
to their continuous counterparts exactly as in
Proposition~\ref{lem:ort_pol_conv_multi}.

The same applies to the cluster-wise constructions in
Section~\ref{sec:orth_pol_limits}. For a fixed cluster \(\tau\in\Tcal\), we now
consider the restricted polynomial space
\[
\Pcal_\Lambda^\tau
\isdef
\{p\mathbbm{1}_{D_\tau}:p\in\Pcal_\Lambda\},
\]
in place of \(\Pcal_k^\tau\). Then, the corresponding non-degeneracy statement,
the samplet basis construction based QR decomposition of moment matrices
and the convergence of the associated discrete orthonormal polynomials remain
valid after replacing \(\Pcal_k\), \(\Pcal_k^\tau\), and \(|\Lambda_k|\) by
\(\Pcal_\Lambda\), \(\Pcal_\Lambda^\tau\), and \(|\Lambda|\), respectively.

Likewise, the coarsening procedure from
Subsection~\ref{subsec:combine_children} extends without modification.
Indeed, if \(D_\tau=D_{\tau_1}\cup D_{\tau_2}\) is the disjoint union of two
children, one considers the piecewise polynomial space
\[
\Pcal_{\Lambda,\mathrm{pw}}^{\tau_1\cup\tau_2}
\isdef
\Big\{
p_1+p_2
:\ p_1\in\Pcal_{\Lambda}^{\tau_1},p_2\in\Pcal_{\Lambda}^{\tau_1}\Big\},
\]
which is of dimension \(2|\Lambda|\). If
\(
\big\{\widehat\pi_i^{\tau_1}\big\}_{i=1}^{|\Lambda|}
\text{ and }
\big\{\widehat\pi_i^{\tau_2}\big\}_{i=1}^{|\Lambda|}
\)
denote the orthonormal polynomials associated with
\(\Pbb|_{D_{\tau_1}}\) and \(\Pbb|_{D_{\tau_2}}\), then the moment matrix
takes the form
\[
\bs M_\tau^\Lambda
\isdef
\Big[
(\bs x^{{\bs\alpha}_i},
\widehat{\pi}^{\tau_1\cup\tau_2}_i)_{\Pbb|_{D_\tau}}
\Big]_{\substack{\ell=i,\ldots,|\Lambda|\\ \ell=1,\ldots,2|\Lambda|}}
\in\Rbb^{|\Lambda|\times 2|\Lambda|},
\]
where \(\{\widehat{\pi}^{\tau_1\cup\tau_2}_i\}_{i=1}^{2|\Lambda|}\) is the
concatenated orthonormal basis of the child clusters, as before. 
Applying a QR decomposition to
\((\bs M_\tau^\Lambda)^\intercal\) yields, exactly as in
Theorem~\ref{th:orth_basis}, an orthonormal basis of
\(\Pcal_\Lambda^\tau\) together with an orthonormal complement characterized by
the vanishing moment conditions
\[
(\bs x^{{\bs\alpha}_i},\widetilde{\sigma})_{\Pbb|_{D_\tau}}=0,
\quad i=1,\ldots,|\Lambda|.
\]

Therefore, the entire samplet construction and its asymptotic analysis extend
from the total-degree spaces \(\Pcal_k\) to the more general
polynomial spaces \(\Pcal_\Lambda\) associated with downward closed
index sets.
Furthermore, we stress that more general primitives than polynomials
are possible and refer the reader to~\cite{balazs2024construction} for details.

\subsection{Completeness in
\texorpdfstring{$L^2(D,\Pbb)$ as $J\to\infty$}
{L2(D,mu) as J->infinity}}
\label{subsec:L2_completeness}
In the regime where infinitely many refinement levels are considered, the union 
of the local spaces of piecewise polynomials \(\Pcal_{k,\mathrm{pw}}\), over all
clusters and levels, is dense in \(L^2(D,\Pbb)\).
For a precise statement, we first introduce some notation.

Let \(\{\Tcal_n\}_{n\ge0}\) be a nested sequence of binary trees for \(D\), 
with the assumption that, if
\begin{equation}\label{defhn}
h_n=\max_{\tau\in\Lcal(\Tcal_n)}\mathrm{diam}(D_\tau),
\end{equation}
then \(h_n\xrightarrow{n\to\infty}0\).
Define the spaces
\begin{equation}\label{eq:VN_BPS}
V_n \isdef \bigoplus_{\tau\in\Lcal(\Tcal_n)} \Pcal_k^\tau\subset L^2(D,\Pbb),
\end{equation}
clearly \(V_n\subset V_{n+1}\). Moreover, for each \(\tau\in\Lcal(\Tcal_n)\), 
let \(\tau_1,\tau_2\in\Lcal(\Tcal_{n+1})\) denote its two children at the 
next refinement level. Then, the refinement relation, see 
Theorem~\ref{th:orth_basis}, applied on \(D_\tau=D_{\tau_1}\cup D_{\tau_2}\),
yields the orthogonal decomposition
\begin{equation}\label{eq:SIA}
\Pcal_{k,\mathrm{pw}}^{\tau_1\cup\tau_2} 
= \Pcal_k^\tau\ \overset{\perp}{\oplus}\widetilde{\Scal}^\tau.
\end{equation}
A straightforward induction argument using~\eqref{eq:SIA} shows that
\begin{equation}\label{eq:nestedVn}
V_{n+1} = V_n\ \overset{\perp}{\oplus} \widetilde{\Scal}_n,
\quad
\widetilde{\Scal}_n\isdef \bigoplus_{\tau\in \Lcal(\Tcal_n)} 
\widetilde{\Scal}^\tau.
\end{equation}
Let \(\widetilde{\bs\Phi}_0\) be an orthonormal basis of \(V_0\), and for each
\(n\ge0\) let \(\widetilde{\bs\Sigma}_n\) be an orthonormal basis of 
\(\widetilde{\Scal}_n\) obtained by concatenating the local orthonormal bases 
\(\widetilde{\bs\Sigma}_n^\tau\) of \(\widetilde{\Scal}^\tau\) at level \(n\).
Then \(\widetilde{\bs\Phi}_0\cup\bigcup_{n\ge0}\widetilde{\bs\Sigma}_n\) is 
orthonormal in \(L^2(D,\Pbb)\) by the decomposition above. We are now ready to 
prove its completeness.

\begin{theorem}\label{th:L2_basis_clean}
Under the notation above, there holds
\begin{equation}\label{eq:L2_basis_clean}
\overline{\bigcup_{n\ge0}V_n}^{\ \|\cdot\|_{L^2(D,\Pbb)}} = L^2(D,\Pbb).
\end{equation}
Consequently, the orthonormal family 
\(\widetilde{\bs\Phi}_0\cup\bigcup_{n\ge0}\widetilde{\bs\Sigma}_n\) 
obtained from binary refinements is an orthonormal basis of \(L^2(D,\Pbb)\).
\end{theorem}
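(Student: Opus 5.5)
We will reduce the density statement to the fact that piecewise constants on increasingly fine partitions are dense, and then derive the basis property from the orthogonal decomposition~\eqref{eq:nestedVn}. Since \(\Pcal_0\subset\Pcal_k\) for every \(k\ge0\) (and \(\bs 0\in\Lambda\) for any nonempty downward closed \(\Lambda\)), each \(V_n\) contains the span of the indicator functions \(\mathbbm{1}_{D_\tau}\), \(\tau\in\Lcal(\Tcal_n)\). It therefore suffices to show that these piecewise constant functions are dense in \(L^2(D,\Pbb)\) as \(n\to\infty\).

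First, since \(D\) is compact and \(\Pbb\) is a finite Borel measure on a metric space, \(\Ccal(D)\) is dense in \(L^2(D,\Pbb)\); this follows from the regularity of finite Borel measures on metric spaces together with Urysohn-type approximation of indicators of closed sets. Fix \(f\in L^2(D,\Pbb)\) and \(\varepsilon>0\), and choose \(g\in\Ccal(D)\) with \(\|f-g\|_{\Pbb}<\varepsilon\). Since \(g\) is uniformly continuous on \(D\), there is \(\delta>0\) with \(|g(\bs x)-g(\bs y)|<\varepsilon\) whenever \(\|\bs x-\bs y\|<\delta\). Because \(h_n\to0\) by~\eqref{defhn}, we can pick \(n\) with \(h_n<\delta\). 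For each leaf \(\tau\in\Lcal(\Tcal_n)\) choose some \(\bs x_\tau\in D_\tau\); if \(D_\tau\) were empty it would have zero measure, contradicting the standing assumption \(\Pbb(D_\tau)>0\). Set
\[
g_n\isdef\sum_{\tau\in\Lcal(\Tcal_n)} g(\bs x_\tau)\mathbbm{1}_{D_\tau}\in V_n .
\]
Since the leaves partition \(D\), we have \(\sup_{D}|g-g_n|\le\varepsilon\), and hence \(\|g-g_n\|_{\Pbb}\le\varepsilon\) because \(\Pbb(D)=1\). By the triangle inequality, \(\|f-g_n\|_{\Pbb}<2\varepsilon\), which proves~\eqref{eq:L2_basis_clean}.

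For the basis property, iterate~\eqref{eq:nestedVn} to obtain
\[
V_n=V_0\overset{\perp}{\oplus}\widetilde{\Scal}_0\overset{\perp}{\oplus}\cdots\overset{\perp}{\oplus}\widetilde{\Scal}_{n-1}.
\]
Thus the span of \(\widetilde{\bs\Phi}_0\cup\bigcup_{m<n}\widetilde{\bs\Sigma}_m\) equals \(V_n\), and the span of the whole family equals \(\bigcup_n V_n\), which is dense by the first part. An orthonormal family whose span is dense is an orthonormal basis. Equivalently, if \(f\) is orthogonal to every member of the family, then \(f\perp V_n\) for all \(n\), so \(f\perp\overline{\bigcup_n V_n}=L^2(D,\Pbb)\), which forces \(f=0\).

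The only step needing care is the density of \(\Ccal(D)\) in \(L^2(D,\Pbb)\) for a general probability measure on a compact set, which we will cite from standard measure theory. Apart from that, we must make sure that the orthogonal decomposition~\eqref{eq:SIA} holds on each leaf in \(L^2(D,\Pbb)\). This is true because the functions on different leaves have disjoint supports, so they are orthogonal with respect to \(\Pbb\) by construction.
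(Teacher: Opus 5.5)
Your proof is correct and follows the same route as the paper. Piecewise constants on the leaves lie in \(V_n\), continuous functions are approximated by them via uniform continuity and \(h_n\to0\), density of \(\Ccal(D)\) in \(L^2(D,\Pbb)\) gives the first claim, and the nested orthogonal decomposition \(V_{n+1}=V_n\overset{\perp}{\oplus}\widetilde{\Scal}_n\) yields the basis property. Your \(\varepsilon\)--\(\delta\) treatment of the uniform-continuity step is actually cleaner than the paper's, whose displayed bound \(|f(\bs x)-f(\bs x_\tau)|\le\|f\|_\infty\|\bs x-\bs x_\tau\|_2\) is not justified for a merely continuous \(f\).
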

\begin{proof}
Observe that each space \(V_n\) contains simple functions that are constant 
on each cell \(D_\tau\), \(\tau\in\Lcal(\Tcal_n)\). Hence,
Theorem~\ref{th:L2_basis_clean} follows by the well-known density of
simple functions in \(L^2(D,\Pbb)\). For the sake of self-containment we 
report the details of the approximant's construction.

Let \(f\in \Ccal(D)\). Since \(D\) is compact, \(f\) is uniformly continuous.
For each \(n\in\Nbb\), define \(f_n\) by choosing on every leaf 
\(\tau\in\Lcal(\Tcal_n)\) a point \(\bs x_\tau\in\tau\) and setting
\[
f_n(\bs x)\isdef f(\bs x_\tau),\quad \bs x\in\tau.
\]
Then, \(f_n\) is piecewise constant on \(\Lcal(\Tcal_n)\) and
we have
\[
	\sup_{\bs x\in D_\tau}|f(\bs x)-f(\bs x_\tau)|
  \leq \|f\|_\infty \|\bs x-\bs x_\tau\|_2\leq \|f\|_\infty h_n, 
  \quad \bs x\in D_\tau,
\]
where $h_n$ is defined as in~\eqref{defhn}. Hence, we arrive at
\[
\|f_n-f\|_{L^2(D,\Pbb)}\leq Ch_n \xrightarrow{n\to\infty}0.
\]
Since \(\Ccal(D)\) is dense in \(L^2(D,\Pbb)\),
the density in~\eqref{eq:L2_basis_clean} follows.
The final statement follows from the orthogonal
decompositions~\eqref{eq:nestedVn}.
\end{proof}

\begin{remark}
A systematic study of approximation spaces associated to the constructed
basis is beyond the scope of this article. Nevertheless, let us mention 
that the orthogonal decompositions developed in~\eqref{eq:VN_BPS} suggest 
quantitative approximation estimates for the associated broken 
polynomial spaces, since the spaces \(V_n\) clearly satisfy the same type of
broken Sobolev approximation estimates as discontinuous polynomial 
spaces in finite element theory.
More precisely, let \(k\in \Nbb\), \(1\le p < \infty\) and
define the broken Sobolev norm
\[
  \|v\|_{W^{k,p}_{\mathrm{pw}}(\Tcal_n)}^p\isdef 
\sum_{\tau\in\Lcal(\Tcal_n)} \|v\|_{W^{k,p}(D_\tau)}^p.
\]
Then, the Bramble--Hilbert lemma, see~\cite{BH70}, yields 
\[
  \inf_{v\in V_n} \|f-v\|_{W^{m,p}_{\mathrm{pw}}(\Tcal_n)}
\le
C h_n^{k-m}|f|_{W^{k,p}(D)},
\]
compare, for example, \cite{DL04}.
\end{remark}

\section{Extension of multiwavelets}
\label{sec:alpert_parity_complete}
A widely known construction of polynomial multiwavelets in the univariate setting
is due to Alpert, see~\cite{A93}. On a dyadic partition of an interval, 
compactly supported piecewise polynomial wavelets with vanishing
moments up to a prescribed degree are constructed. The multivariate setting
is addressed by combining the univariate bases in a 
tensor-product construction.
In contrast, the framework developed in Section~\ref{sec:orth_pol_limits} leads
to a genuinely non-tensorial construction based on general polynomial spaces.
In this section, we show that, in the case of congruent binary splits, the
continuous coarsening procedure from Theorem~\ref{th:orth_basis} yields a
detail space with the same vanishing moment structure as in Alpert's
construction. We further show that the corresponding detail functions may be
chosen to satisfy a symmetry condition with respect to the splitting hyperplane.

In what follows, we assume \(D=[0,1]^d\). Then, the
$d$-dimensional Lebesgue measure \(\lambda_d\) is a probability measure and we 
assume \(\Pbb=\lambda_d\), which 
amounts to uniform sampling. 
Let \(j<J\) and let again \(\tau\in\Tcal_j\) have children
\(\tau_1,\tau_2\in\Tcal_{j+1}\), so that
\(
D_\tau=D_{\tau_1}\cup D_{\tau_2}.
\)
Recall the broken polynomial
space
\[
\Pcal_{k,\mathrm{pw}}^{\tau_1\cup\tau_2}
=\{p_1+p_2: p_1\in \Pcal_k^{\tau_1}, p_2\in \Pcal_k^{\tau_1}\}
\supset\Pcal_k^\tau,
\]
as well as the orthogonal decomposition
\[
\Pcal_{k,\mathrm{pw}}^{\tau_1\cup\tau_2}
=
\Pcal_k^\tau \overset{\perp}{\oplus}\widetilde\Scal_j^\tau,
\]
from Subsection~\ref{subsec:combine_children}. 
Herein, \(\Pcal_k^\tau\) is defined as in~\eqref{eq:defPkDtau} and
\[
\widetilde\Scal_j^\tau\isdef\operatorname{span}\widetilde{\bs\Sigma}_j^\tau,
\quad
\widetilde{\bs\Sigma}_j^\tau
=\{\widetilde{\sigma}_{j,\ell}^\tau\}_{\ell=1}^{|\Lambda_k|},
\]
with \(\widetilde{\bs\Sigma}_j^\tau\) given by 
Theorem~\ref{th:orth_basis}. In particular,
\(\widetilde{\Scal}_j^\tau\) consists precisely of those functions 
\(\widetilde{\sigma} \in\Pcal_{k,\mathrm{pw}}^{\tau_1\cup\tau_2}\) that
satisfy the vanishing moment conditions
\[
(\bs x^{\bs\alpha_i},\widetilde{\sigma})_{\Pbb}=0,
\quad i=1,\ldots,|\Lambda_k|.
\]

In the univariate construction, the multiwavelets associated with
symmetric dyadic splits may be chosen to be even or odd with respect to the
midpoint of the parent interval. 
Specifically, Alpert worked on the interval \([0,1]\), which is subdivided dyadically and, 
for a level \(j\), any multiwavelet \(f_{k,j}\) is a piecewise-defined 
function satisfying
\[
f_{k,j}(x)=\begin{cases}
	p(x) & \text{if } x\in [k2^{-j},(2k+1)2^{-(j+1)}],\\
	\pm p(-x) & \text{if } x\in [(2k+1)2^{-(j+1)},(k+1)2^{-j}],\\
	0 & \text{otherwise},
\end{cases}
\]
for a suitable polynomial \(p\) and a suitable choice of sign.
We show that an analogous symmetry property
holds in the present setting for symmetric binary splits in arbitrary space
dimension.

Assume that \(D_\tau\) is bisected by the hyperplane orthogonal to the
direction \(\bs e_o\) through its center \(\bs c_\tau\), for some
\(o\in\{1,\ldots,d\}\), so that \(D_{\tau_1}\) and \(D_{\tau_2}\) are
exchanged by the reflection
\[
\rho_\tau(\bs x)
\isdef
\bs x-2\big((\bs x-\bs c_\tau)\cdot \bs e_o\big)\bs e_o,
\quad \bs x\in D_\tau.
\]
Observe that \(\Pbb|_{D_\tau}\) is invariant under
\(\rho_\tau\). We then define the associated reflection operator
\[
\mathcal R_\tau\colon L^2(D_\tau,\Pbb|_{D_\tau})
\to L^2(D_\tau,\Pbb|_{D_\tau}),\quad
(\mathcal R_\tau f)(\bs x)\isdef f\big(\rho_\tau(\bs x)\big).
\]
Observe that \(\mathcal{R}_\tau\) is unitary and involutive, 
so its eigenvalues \(\lambda\) satisfy \(|\lambda|=1\).
Accordingly, we introduce the subspaces
\begin{align}
	&W^+_\tau\isdef \{f\in\Pcal_{k,\mathrm{pw}}^{\tau_1\cup\tau_2}:
  \mathcal R_\tau f=f\},\\
	&W^-_\tau\isdef \{f\in\Pcal_{k,\mathrm{pw}}^{\tau_1\cup\tau_2}:
  \mathcal R_\tau f=-f\},
\end{align}
and obtain the orthogonal decomposition
\begin{equation}\label{eq:W_even_odd}
	\Pcal_{k,\mathrm{pw}}^{\tau_1\cup\tau_2}
  =W^+_\tau\overset{\perp}{\oplus}W^-_\tau.
\end{equation}

Let \(h_o>0\) denote half the side length of \(D_\tau\) in direction
\(\bs e_o\), and define the centered scaled coordinate
\[
t_o(\bs x)\isdef \frac{x_o-(\bs c_\tau)_o}{h_o},
\quad \bs x\in D_\tau.
\]
For \(\bs\beta\in\Lambda_k\), we set
\begin{equation}\label{eq:centered_basis_binary}
	\widehat e_{\bs\beta}(\bs x)
	\isdef
	t_o(\bs x)^{\beta_o}
	\prod_{\substack{i=1\\ i\neq o}}^d x_i^{\beta_i}.
\end{equation}
Since \(t_o(\bs x)\) is affine in \(x_o\), the family
\(\{\widehat e_{\bs\beta}\}_{\bs\beta\in\Lambda_k}\) spans \(\Pcal_k\), and
hence \(\{\widehat e_{\bs\beta}\mathbbm{1}_{D_\tau}\}_{\bs\beta\in\Lambda_k}\)
spans \(\Pcal_k^\tau\). Therefore, for 
\(\widetilde{\sigma}\in\Pcal_{k,\mathrm{pw}}^{\tau_1\cup\tau_2}\), the condition
\(\widetilde{\sigma}\in \widetilde\Scal_j^\tau\) is equivalent to
\begin{equation}\label{eq:centered_moments_binary}
	(\widehat e_{\bs\beta},\widetilde{\sigma})_{\Pbb|_{D_\tau}}=0,
	\quad
	\text{for all }\bs\beta\in\Lambda_k.
\end{equation}
Moreover, by construction of \(\rho_\tau\), there holds
\begin{equation}\label{eq:tilde_parity_binary}
	\widehat e_{\bs\beta}\big(\rho_\tau(\bs x)\big)
	=
	(-1)^{\beta_o}\widehat e_{\bs\beta}(\bs x),
	\quad \bs x\in D_\tau.
\end{equation}

We are now in the position to show that the space \(\widetilde\Scal_j^\tau\) 
admits an orthonormal
basis with definite symmetry.

\begin{theorem}\label{th:multid_parity_basis_binary}
	Let \(\tau\in\Tcal_j\) be such that \(D_\tau\) is symmetrically split as
	above. Then,
	\begin{equation}\label{eq:D_split_binary}
		\widetilde\Scal_j^\tau
		=
		\bigl(\widetilde\Scal_j^\tau\cap W^+_\tau\bigr)
		\overset{\perp}{\oplus}
		\bigl(\widetilde\Scal_j^\tau\cap W^-_\tau\bigr).
	\end{equation}
	In particular, \(\widetilde\Scal_j^\tau\) admits an orthonormal basis
	\(\{\widetilde{\sigma}_{j,\ell}^\tau\}_{\ell=1}^{|\Lambda_k|}\) such that each
	\(\widetilde{\sigma}_{j,\ell}^\tau\) belongs either to \(W^+_\tau\) or to
	\(W^-_\tau\), and hence satisfies
	\[
	\mathcal R_\tau\widetilde{\sigma}_{j,\ell}^\tau=\pm 
  \widetilde{\sigma}_{j,\ell}^\tau,
	\quad \ell=1,\ldots,|\Lambda_k|.
	\]
\end{theorem}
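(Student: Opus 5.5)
The plan is to show that \(\widetilde\Scal_j^\tau\) is invariant under the reflection operator \(\mathcal R_\tau\). Once this is known, the decomposition~\eqref{eq:D_split_binary} follows from elementary linear algebra. Because \(\mathcal R_\tau\) is unitary and involutive, it is self-adjoint on \(L^2(D_\tau,\Pbb|_{D_\tau})\). Hence the operators \(P_\pm\isdef\frac12(I\pm\mathcal R_\tau)\) are complementary orthogonal projections. Their ranges, intersected with \(\Pcal_{k,\mathrm{pw}}^{\tau_1\cup\tau_2}\), are exactly \(W^\pm_\tau\). Here we use that \(\mathcal R_\tau\) maps \(\Pcal_{k,\mathrm{pw}}^{\tau_1\cup\tau_2}\) into itself: \(\rho_\tau\) is affine and swaps \(D_{\tau_1}\) and \(D_{\tau_2}\), so a polynomial on \(D_{\tau_1}\) composed with \(\rho_\tau\) is a polynomial of the same degree on \(D_{\tau_2}\), and conversely. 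This also justifies~\eqref{eq:W_even_odd}. If \(\mathcal R_\tau\widetilde\Scal_j^\tau\subset\widetilde\Scal_j^\tau\), then every \(\widetilde\sigma\in\widetilde\Scal_j^\tau\) splits as \(\widetilde\sigma=P_+\widetilde\sigma+P_-\widetilde\sigma\) with \(P_\pm\widetilde\sigma\in\widetilde\Scal_j^\tau\cap W^\pm_\tau\). The two summands are orthogonal because \(W^+_\tau\perp W^-_\tau\), which gives~\eqref{eq:D_split_binary}.

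To prove the invariance, I would use the characterization~\eqref{eq:centered_moments_binary} of \(\widetilde\Scal_j^\tau\) through the centered basis \(\widehat e_{\bs\beta}\). Take \(\widetilde\sigma\in\widetilde\Scal_j^\tau\) and \(\bs\beta\in\Lambda_k\). Since \(\Pbb|_{D_\tau}=\lambda_d|_{D_\tau}\) is invariant under \(\rho_\tau\) and \(\rho_\tau\) is an involution, the change of variables \(\bs x\mapsto\rho_\tau(\bs x)\) together with~\eqref{eq:tilde_parity_binary} gives
\[
(\widehat e_{\bs\beta},\mathcal R_\tau\widetilde\sigma)_{\Pbb|_{D_\tau}}
=(\mathcal R_\tau\widehat e_{\bs\beta},\widetilde\sigma)_{\Pbb|_{D_\tau}}
=(-1)^{\beta_o}(\widehat e_{\bs\beta},\widetilde\sigma)_{\Pbb|_{D_\tau}}=0 .
\]
Hence \(\mathcal R_\tau\widetilde\sigma\) again satisfies all vanishing moment conditions. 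It also lies in \(\Pcal_{k,\mathrm{pw}}^{\tau_1\cup\tau_2}\), so \(\mathcal R_\tau\widetilde\sigma\in\widetilde\Scal_j^\tau\). Equivalently, \(\mathcal R_\tau\) preserves \(\Pcal_k^\tau\) (the span of the \(\widehat e_{\bs\beta}\)), and as a self-adjoint operator it therefore preserves the orthogonal complement of \(\Pcal_k^\tau\) inside the invariant space \(\Pcal_{k,\mathrm{pw}}^{\tau_1\cup\tau_2}\).

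Finally, to build the basis I would pick orthonormal bases of the two subspaces \(\widetilde\Scal_j^\tau\cap W^\pm_\tau\), for example by Gram--Schmidt applied to \(P_\pm\) of the columns of \(\widetilde{\bs\Sigma}_j^\tau\). Their union is orthonormal by the orthogonality of the two subspaces. By~\eqref{eq:D_split_binary} it spans \(\widetilde\Scal_j^\tau\), whose dimension is \(2|\Lambda_k|-|\Lambda_k|=|\Lambda_k|\) by Theorem~\ref{th:orth_basis}. Each element then satisfies \(\mathcal R_\tau\widetilde\sigma^\tau_{j,\ell}=\pm\widetilde\sigma^\tau_{j,\ell}\).

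The only step needing care is checking that \(\mathcal R_\tau\) preserves the broken space \(\Pcal_{k,\mathrm{pw}}^{\tau_1\cup\tau_2}\) and is self-adjoint. Both rest on \(\rho_\tau\) being a measure-preserving affine involution exchanging the two children; this is where the congruent (symmetric) split and the uniform measure are essential. I expect this to be routine.
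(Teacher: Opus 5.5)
Your proof is correct, but its key step differs from the one the paper's proof uses. The two proofs split $\widetilde\sigma\in\widetilde\Scal_j^\tau$ into the same parts, $\widetilde\sigma_\pm=P_\pm\widetilde\sigma$; they differ only in why each part remains in $\widetilde\Scal_j^\tau$.

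The paper obtains the split via \eqref{eq:W_even_odd} and then invokes the parity-decoupling Lemma~\ref{lem:parity_decouple_multid}. An even function is automatically orthogonal to the $\widehat e_{\bs\beta}$ with $\beta_o$ odd, and an odd function to those with $\beta_o$ even. Reading $T^\tau\widetilde\sigma_+=-T^\tau\widetilde\sigma_-$ componentwise therefore forces both sides to vanish.

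You instead show that $\widetilde\Scal_j^\tau$ is $\mathcal R_\tau$-invariant. This is the paper's Lemma~\ref{lem:D_invariant_rewrite}, which the paper proves as a preliminary but does not use in its proof of the theorem. You then apply the general principle that a subspace invariant under a self-adjoint involution splits orthogonally into its $\pm1$ eigenspaces via $P_\pm=\frac12(I\pm\mathcal R_\tau)$.

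Your route is more structural. In its orthogonal-complement form it only needs $\rho_\tau$ to be an affine, measure-preserving involution of $D_\tau$ that exchanges the children; the parity basis $\widehat e_{\bs\beta}$ is not required. It also directly justifies the recipe $\frac12(\widetilde\sigma_\ell\pm\mathcal R_\tau\widetilde\sigma_\ell)$ stated after the theorem. In addition, you check explicitly that $\mathcal R_\tau$ preserves $\Pcal_{k,\mathrm{pw}}^{\tau_1\cup\tau_2}$ and is self-adjoint, which the paper uses tacitly in \eqref{eq:W_even_odd}.

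The paper's route shows how the moment system decouples into an even block and an odd block. That makes it easy to read off that $\dim(\widetilde\Scal_j^\tau\cap W^+_\tau)$ and $\dim(\widetilde\Scal_j^\tau\cap W^-_\tau)$ equal the number of $\bs\beta\in\Lambda_k$ with $\beta_o$ odd and even, respectively.
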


The proof of this theorem requires two preliminary lemmas.

\begin{lemma}\label{lem:parity_decouple_multid}
	If \(\widetilde{\sigma}\in W^+_\tau\), then
	\[
	(\widehat e_{\bs\beta},\widetilde{\sigma})_{\Pbb|_{D_\tau}}=0
	\]
	for every \(\bs\beta\in\Lambda_k\) such that \(\beta_o\) is odd.
	If \(\widetilde{\sigma}\in W^-_\tau\), then
	\[
	(\widehat e_{\bs\beta},\widetilde{\sigma})_{\Pbb|_{D_\tau}}=0
	\]
	for every \(\bs\beta\in\Lambda_k\) such that \(\beta_o\) is even.
\end{lemma}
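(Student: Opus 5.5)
The plan is a change of variables under the reflection \(\rho_\tau\), exploiting that \(\Pbb|_{D_\tau}\) is \(\rho_\tau\)-invariant and that the centered basis has the parity \eqref{eq:tilde_parity_binary}.

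\textbf{Step 1: the reflection operator is symmetric.} For any \(f,g\in L^2(D_\tau,\Pbb|_{D_\tau})\) we have
\[
(\mathcal R_\tau f,\mathcal R_\tau g)_{\Pbb|_{D_\tau}}=(f,g)_{\Pbb|_{D_\tau}}.
\]
This follows by substituting \(\bs y=\rho_\tau(\bs x)\). Here \(\rho_\tau\) is an affine involution of \(D_\tau\) with \(|\det|=1\), and \(\Pbb|_{D_\tau}\) is Lebesgue measure on \(D_\tau\), hence invariant under \(\rho_\tau\). Since \(\mathcal R_\tau^2=\mathrm{id}\), this gives
\[
(f,\mathcal R_\tau g)_{\Pbb|_{D_\tau}}=(\mathcal R_\tau f,g)_{\Pbb|_{D_\tau}},
\]
so \(\mathcal R_\tau\) is self-adjoint.

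\textbf{Step 2: the parity computation.} Let \(\widetilde{\sigma}\in W^{\pm}_\tau\), so that \(\mathcal R_\tau\widetilde{\sigma}=\epsilon\widetilde{\sigma}\) with \(\epsilon=\pm1\). By \eqref{eq:tilde_parity_binary} we have \(\mathcal R_\tau\widehat e_{\bs\beta}=(-1)^{\beta_o}\widehat e_{\bs\beta}\) on \(D_\tau\). Combining this with Step 1,
\[
(\widehat e_{\bs\beta},\widetilde{\sigma})_{\Pbb|_{D_\tau}}
=(\mathcal R_\tau\widehat e_{\bs\beta},\mathcal R_\tau\widetilde{\sigma})_{\Pbb|_{D_\tau}}
=(-1)^{\beta_o}\epsilon\,(\widehat e_{\bs\beta},\widetilde{\sigma})_{\Pbb|_{D_\tau}}.
\]
If \((-1)^{\beta_o}\epsilon=-1\), the inner product equals its own negative and therefore vanishes. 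This happens in exactly two cases:
\begin{itemize}
\item \(\epsilon=+1\) and \(\beta_o\) odd;
\item \(\epsilon=-1\) and \(\beta_o\) even.
\end{itemize}
These are precisely the two assertions of the lemma.

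\textbf{Main point of care.} The only delicate step is justifying the change of variables. Note that \(\widetilde{\sigma}\) is merely piecewise polynomial, so it is discontinuous across the splitting hyperplane, but it is bounded and measurable, and that suffices. We must check that \(\rho_\tau\) maps \(D_\tau\) onto itself, swapping \(D_{\tau_1}\) and \(D_{\tau_2}\) up to the null set formed by the hyperplane. We must also check that the pushforward of \(\lambda_d|_{D_\tau}\) under \(\rho_\tau\) equals \(\lambda_d|_{D_\tau}\). Both hold because \(\rho_\tau\) is a reflection of the box \(D_\tau\) through its own center hyperplane. Everything else is the one-line computation above.
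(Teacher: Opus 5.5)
Your proposal is correct and takes the same approach as the paper's proof. The paper likewise substitutes $\bs x\mapsto\rho_\tau(\bs x)$ and uses the invariance of $\Pbb|_{D_\tau}$, \eqref{eq:tilde_parity_binary} and $\mathcal R_\tau\widetilde\sigma=\pm\widetilde\sigma$ to show the inner product equals its own negative; your phrasing through unitarity of $\mathcal R_\tau$ and the sign $(-1)^{\beta_o}\epsilon$ just treats both cases at once, where the paper writes out one case and calls the other analogous.
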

\begin{proof}
	Assume first that \(\widetilde{\sigma}\in W^+_\tau\). Let 
  \(\bs\beta\in\Lambda_k\) with
	\(\beta_o\) odd. Using the change of variables
	\(\bs x\mapsto \rho_\tau(\bs x)\), the invariance of \(\Pbb|_{D_\tau}\),
	\eqref{eq:tilde_parity_binary}, and the identity
	\(\widetilde\sigma\big(\rho_\tau(\bs x)\big)=\widetilde\sigma(\bs x)\), we obtain
	\begin{align*}
		(\widehat e_{\bs\beta},\widetilde{\sigma})_{\Pbb|_{D_\tau}}
		&=
		\int_{D_\tau}\widehat e_{\bs\beta}(\bs x)\widetilde{\sigma}(\bs x)\d\Pbb
		=
		\int_{D_\tau}\widehat e_{\bs\beta}\big(\rho_\tau(\bs x)\big)
		\widetilde{\sigma}\big(\rho_\tau(\bs x)\big)\d\Pbb\\
		&=
		-\int_{D_\tau}\widehat e_{\bs\beta}(\bs x)\widetilde{\sigma}(\bs x)\d\Pbb
		=-(\widehat e_{\bs\beta},\widetilde{\sigma})_{\Pbb|_{D_\tau}}.
	\end{align*}
	Hence the integral vanishes. The case \(\widetilde{\sigma}\in W^-_\tau\) 
  is analogous.
\end{proof}

\begin{lemma}\label{lem:D_invariant_rewrite}
	The detail space \(\widetilde\Scal_j^\tau\) is invariant 
  under \(\mathcal R_\tau\).
\end{lemma}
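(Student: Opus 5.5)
To prove Lemma~\ref{lem:D_invariant_rewrite}, we characterize the detail space \(\widetilde\Scal_j^\tau\) by vanishing moments, as in~\eqref{eq:centered_moments_binary}, and check that both defining properties are preserved by \(\mathcal R_\tau\). These properties are membership in \(\Pcal_{k,\mathrm{pw}}^{\tau_1\cup\tau_2}\) and the moment conditions. Let \(\widetilde\sigma\in\widetilde\Scal_j^\tau\). We must show that \(\mathcal R_\tau\widetilde\sigma\in\Pcal_{k,\mathrm{pw}}^{\tau_1\cup\tau_2}\) and that \((\widehat e_{\bs\beta},\mathcal R_\tau\widetilde\sigma)_{\Pbb|_{D_\tau}}=0\) for all \(\bs\beta\in\Lambda_k\).

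First, we verify that \(\mathcal R_\tau\) maps \(\Pcal_{k,\mathrm{pw}}^{\tau_1\cup\tau_2}\) into itself. Write \(\widetilde\sigma=p_1\mathbbm{1}_{D_{\tau_1}}+p_2\mathbbm{1}_{D_{\tau_2}}\) with \(p_1,p_2\in\Pcal_k\). Since \(\rho_\tau\) swaps \(D_{\tau_1}\) and \(D_{\tau_2}\) and is an involution, we have \(\mathbbm{1}_{D_{\tau_1}}\circ\rho_\tau=\mathbbm{1}_{D_{\tau_2}}\), up to the \(\Pbb\)-null splitting hyperplane, which is irrelevant in \(L^2\). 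Hence
\[
\mathcal R_\tau\widetilde\sigma=(p_2\circ\rho_\tau)\mathbbm{1}_{D_{\tau_1}}+(p_1\circ\rho_\tau)\mathbbm{1}_{D_{\tau_2}}.
\]
Since \(\rho_\tau\) is affine, \(p\circ\rho_\tau\) has total degree at most \(k\), so \(p_i\circ\rho_\tau\in\Pcal_k\). In the setting of downward closed sets, one would instead use that \(\rho_\tau\) acts affinely only in the coordinate \(x_o\). Thus \(\mathcal R_\tau\widetilde\sigma\in\Pcal_{k,\mathrm{pw}}^{\tau_1\cup\tau_2}\).

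Second, we check the moment conditions. By the change of variables \(\bs x\mapsto\rho_\tau(\bs x)\), the invariance of \(\Pbb|_{D_\tau}\) under \(\rho_\tau\), and~\eqref{eq:tilde_parity_binary}, we obtain
\[
(\widehat e_{\bs\beta},\mathcal R_\tau\widetilde\sigma)_{\Pbb|_{D_\tau}}
=\int_{D_\tau}\widehat e_{\bs\beta}(\rho_\tau(\bs x))\widetilde\sigma(\bs x)\d\Pbb
=(-1)^{\beta_o}(\widehat e_{\bs\beta},\widetilde\sigma)_{\Pbb|_{D_\tau}}=0.
\]
Here we use that \(\rho_\tau\) is an involution and that \(\widetilde\sigma\) satisfies~\eqref{eq:centered_moments_binary}. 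By the equivalence stated before~\eqref{eq:centered_moments_binary}, this gives \(\mathcal R_\tau\widetilde\sigma\in\widetilde\Scal_j^\tau\).

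The only delicate point is the first step: the claim that composition with \(\rho_\tau\) preserves the polynomial class, and that the boundary hyperplane has measure zero. Both are immediate for affine reflections and for \(\Pbb=\lambda_d\).

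Alternatively, one could observe that \(\mathcal R_\tau\) is unitary and leaves \(\Pcal_k^\tau\) invariant. It would then also leave the orthogonal complement \(\widetilde\Scal_j^\tau\) of \(\Pcal_k^\tau\) in \(\Pcal_{k,\mathrm{pw}}^{\tau_1\cup\tau_2}\) invariant.
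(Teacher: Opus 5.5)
Your proof is correct and follows the paper's argument: you use the centered moment characterization~\eqref{eq:centered_moments_binary}, the change of variables under the involution \(\rho_\tau\) with invariance of \(\Pbb|_{D_\tau}\), and the parity relation~\eqref{eq:tilde_parity_binary}. Your explicit check that \(\mathcal R_\tau\) maps \(\Pcal_{k,\mathrm{pw}}^{\tau_1\cup\tau_2}\) into itself fills in a step the paper leaves implicit; the paper's argument and the decomposition~\eqref{eq:W_even_odd} tacitly rely on it.
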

\begin{proof}
	Let \(\widetilde{\sigma}\in \widetilde\Scal_j^\tau\). 
  By~\eqref{eq:centered_moments_binary}, it
	suffices to show that
	\[
	(\widehat e_{\bs\beta},\mathcal R_\tau \widetilde{\sigma})_{\Pbb|_{D_\tau}}=0
	\quad\text{for all }\bs\beta\in\Lambda_k.
	\]
	Using again the change of variables \(\bs x\mapsto\rho_\tau(\bs x)\) and
	\eqref{eq:tilde_parity_binary}, we find
	\begin{align*}
		(\widehat e_{\bs\beta},\mathcal R_\tau \widetilde{\sigma})_{\Pbb|_{D_\tau}}
		=
		\int_{D_\tau}\widehat e_{\bs\beta}(\bs x)\widetilde{\sigma}
    \big(\rho_\tau(\bs x)\big)\d\Pbb
		=
		\int_{D_\tau}\widehat e_{\bs\beta}\big(\rho_\tau(\bs x)\big)
    \widetilde{\sigma}(\bs x)\d\Pbb
		=
		(-1)^{\beta_o}
		(\widehat e_{\bs\beta},\widetilde{\sigma})_{\Pbb|_{D_\tau}}
		=0.
	\end{align*}
	This proves the claim.
\end{proof}

We are now ready to prove the theorem.

\begin{proof}[Proof of Theorem~\ref{th:multid_parity_basis_binary}]
	Let \(T^\tau \colon \Pcal_{k,\mathrm{pw}}^{\tau_1\cup\tau_2}
  \to\Rbb^{|\Lambda_k|}\) be the analysis operator
	defined by
	\[
	(T^\tau \widetilde{\sigma})_{\bs\beta}
	\isdef
	(\widehat e_{\bs\beta},\widetilde{\sigma})_{\Pbb|_{D_\tau}},
	\quad \bs\beta\in\Lambda_k.
	\]
	We define
	\[
	\widetilde\Scal_j^{\tau,+}\isdef \widetilde\Scal_j^\tau\cap W^+_\tau,
	\quad
	\widetilde\Scal_j^{\tau,-}\isdef \widetilde\Scal_j^\tau\cap W^-_\tau.
	\]
	By~\eqref{eq:centered_moments_binary}, there holds
	\[
	\widetilde\Scal_j^\tau=\ker(T^\tau).
	\]
	
	First, we prove~\eqref{eq:D_split_binary}. The inclusion \((\supseteq)\) is
	trivial, so it suffices to show \((\subseteq)\). 
  Let \(\widetilde{\sigma}\in\widetilde\Scal_j^\tau\).
	By~\eqref{eq:W_even_odd}, there exist \(\sigma_+\in W^+_\tau\) and
	\(\widetilde{\sigma}_-\in W^-_\tau\) such that
	\[
	\widetilde{\sigma}=\widetilde{\sigma}_++\widetilde{\sigma}_-.
	\]
	It remains to show that \(\widetilde{\sigma}_\pm\in\ker(T^\tau)\), i.e.,
	\(T^\tau \widetilde{\sigma}_\pm=0\). Since \(T^\tau \widetilde{\sigma}=0\), 
  there holds
	\begin{equation}\label{eq:Tf1Tf2}
		T^\tau \widetilde{\sigma}_+=-T^\tau \widetilde{\sigma}_-.
	\end{equation}
	By Lemma~\ref{lem:parity_decouple_multid}, if \(\bs\beta\in\Lambda_k\) is such
	that \(\beta_o\) is odd, then
	\[
	(\widehat e_{\bs\beta},\widetilde{\sigma}_+)_{\Pbb|_{D_\tau}}=0,
	\]
	whereas
	\[
	(\widehat e_{\bs\beta},\widetilde{\sigma}_-)_{\Pbb|_{D_\tau}}=0
	\]
	if \(\beta_o\) is even. Reading~\eqref{eq:Tf1Tf2} component-wise yields
	\[
	(\widehat e_{\bs\beta},\widetilde{\sigma}_+)_{\Pbb|_{D_\tau}}
	=
	-(\widehat e_{\bs\beta},\widetilde{\sigma}_-)_{\Pbb|_{D_\tau}}
	=0
	\]
	also for those \(\bs\beta\in\Lambda_k\) with \(\beta_o\) even.
	Consequently,
	\[
	(\widehat e_{\bs\beta},\widetilde{\sigma}_+)_{\Pbb|_{D_\tau}}=0
	\quad\text{for every }\bs\beta\in\Lambda_k,
	\]
	i.e., \(T^\tau \widetilde{\sigma}_+=0\). By~\eqref{eq:Tf1Tf2}, we also obtain
	\(T^\tau \widetilde{\sigma}_-=0\). Hence
	\[
	\widetilde{\sigma}_+\in\widetilde\Scal_j^\tau\cap W^+_\tau,
	\quad
	\widetilde{\sigma}_-\in\widetilde\Scal_j^\tau\cap W^-_\tau,
	\]
	which proves~\eqref{eq:D_split_binary}.
	
	The second assertion follows by choosing orthonormal bases of
	\(\widetilde\Scal_j^{\tau,+}\) and \(\widetilde\Scal_j^{\tau,-}\) and 
  taking their union.
\end{proof}

While Theorem~\ref{th:multid_parity_basis_binary} guarantees the existence of a
symmetric orthonormal basis of \(\widetilde\Scal_j^\tau\), its proof is 
not constructive
and therefore does not provide such a basis explicitly. To fill this gap,
observe that if \(\{\widetilde{\sigma}_\ell\}_{\ell=1}^{|\Lambda_k|}\) is any
orthonormal basis of \(\widetilde\Scal_j^\tau\), then
\[
\widetilde{\sigma}_\ell^\pm
\isdef
\frac12\bigl(\widetilde{\sigma}_\ell\pm\mathcal R_\tau\widetilde{\sigma}_\ell\bigr)
\]
spans \(\widetilde\Scal_j^\tau\), but does not form a basis of 
\(\widetilde\Scal_j^\tau\), since
\(\dim \widetilde\Scal_j^\tau=|\Lambda_k|<2|\Lambda_k|\). 
After eliminating redundancies,
one may apply the Gram-Schmidt process separately to the families
\(\{\widetilde{\sigma}_\ell^+\}_{\ell=1}^{|\Lambda_k|}\) 
and \(\{\widetilde{\sigma}_\ell^-\}_{\ell=1}^{|\Lambda_k|}\) 
to obtain orthonormal bases of \(\widetilde\Scal_j^\tau\cap W^+_\tau\) and
\(\widetilde\Scal_j^\tau\cap W^-_\tau\), respectively. By taking the union of 
these two bases, one obtains an orthonormal basis of \(\widetilde\Scal_j^\tau\) 
with the desired symmetries.

\section{Numerical results}\label{sec:numerics}
We numerically study the convergence of the samplet basis using random samples
as well as Halton points in \(D=[0,1]^d\) for \(d=1,2,3\). Exemplarily, we
consider samplets with \(k+1=3\) total-degree vanishing moments, i.e.,
the index set is given by \(\Lambda_2\). The maximum level of the cluster
tree is given by \(J=10\) \((d=1)\), \(J=5\) \((d=2)\) and
\(J=3\) \((d=3)\). The minimum number of samples per leaf is chosen such
that \(|\tau|\geq|\Lambda_2|\) for all \(\tau\in\Lcal(\Tcal)\). The specific
average minimum numbers of samples per leave in case of 10 runs of random sampling
and the minimum number of samples in case of Halton points are shown on the left
and on the right of Figure~\ref{fig:numberOfSamples}, respectively.
\begin{figure}[htb]
  \centering
  \begin{tikzpicture}
  \begin{axis}[
    width=0.44\textwidth,
    xlabel={$N$},
    ylabel={$\min_{\tau\in\Lcal(\Tcal)}|\tau|$},
    grid=both,
    legend style={at={(0.1,0.98)},anchor=north west},
    error bars/y dir=both,
    error bars/y explicit,
        xmode=log,
        ymode=log
  ]
    \addplot[
      color=blue,
      mark=*,
    ]
      table[
        header=false,
        x index=0,         
        y index=1,         
      ]{./data/ave_sd_error_1_MC_stats.txt};
    \addlegendentry{$d=1$};

    \addplot[
      color=red,
      mark=square*,
    ]
      table[
        header=false,
        x index=0,
        y index=1,
      ]{./data/ave_sd_error_2_MC_stats.txt};
    \addlegendentry{$d=2$};

    \addplot[
      color=green!60!black,
      mark=triangle*,
    ]
      table[
        header=false,
        x index=0,
        y index=1,
      ]{./data/ave_sd_error_3_MC_stats.txt};
    \addlegendentry{$d=3$};
  \end{axis}
\end{tikzpicture}\qquad
  \begin{tikzpicture}
  \begin{axis}[
    width=0.44\textwidth,
    xlabel={$N$},
    ylabel={$\min_{\tau\in\Lcal(\Tcal)}|\tau|$},
    grid=both,
    legend style={at={(0.1,0.98)},anchor=north west},
    error bars/y dir=both,
    error bars/y explicit,
        xmode=log,
        ymode=log
  ]
    \addplot[
      color=blue,
      mark=*,
    ]
      table[
        header=false,
        x index=0,         
        y index=1,         
      ]{./data/ave_sd_error_1_QMC_stats.txt};
    \addlegendentry{$d=1$};

    \addplot[
      color=red,
      mark=square*,
    ]
      table[
        header=false,
        x index=0,
        y index=1,
      ]{./data/ave_sd_error_2_QMC_stats.txt};
    \addlegendentry{$d=2$};

    \addplot[
      color=green!60!black,
      mark=triangle*,
    ]
      table[
        header=false,
        x index=0,
        y index=1,
      ]{./data/ave_sd_error_3_QMC_stats.txt};
    \addlegendentry{$d=3$};
  \end{axis}
\end{tikzpicture}
\caption{\label{fig:numberOfSamples}Number of samples $N$ and 
minimum number of samples per tree leaf for random sampling 
(left, average over 10 runs)
and Halton points (right).}
\end{figure}
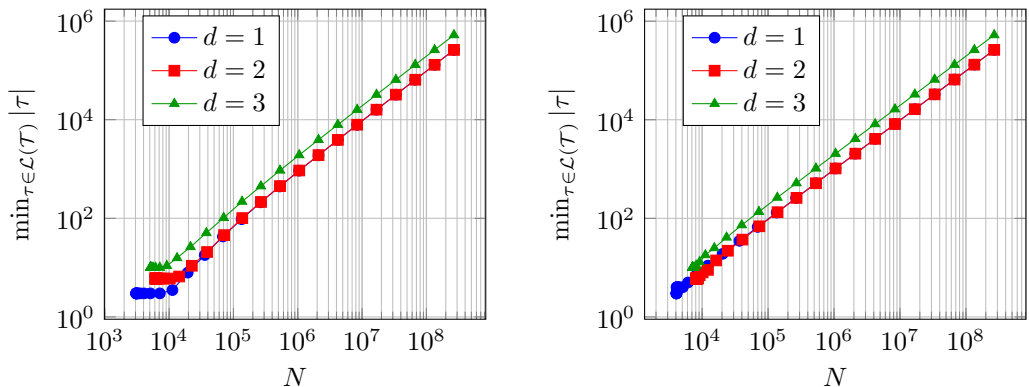

In Figure~\ref{fig:visConv2D} and Figure~\ref{fig:MCcnv2D}, we visualize the
convergence of a scaling distribution (left) and a samplet (right) for 
different numbers of samples
\(N\). As can be seen, already for a relatively small number of samples, 
the coefficients
of the corresponding signed measures are very close to each other.
\begin{figure}[htb]
\centering
\includegraphics[scale=0.3]{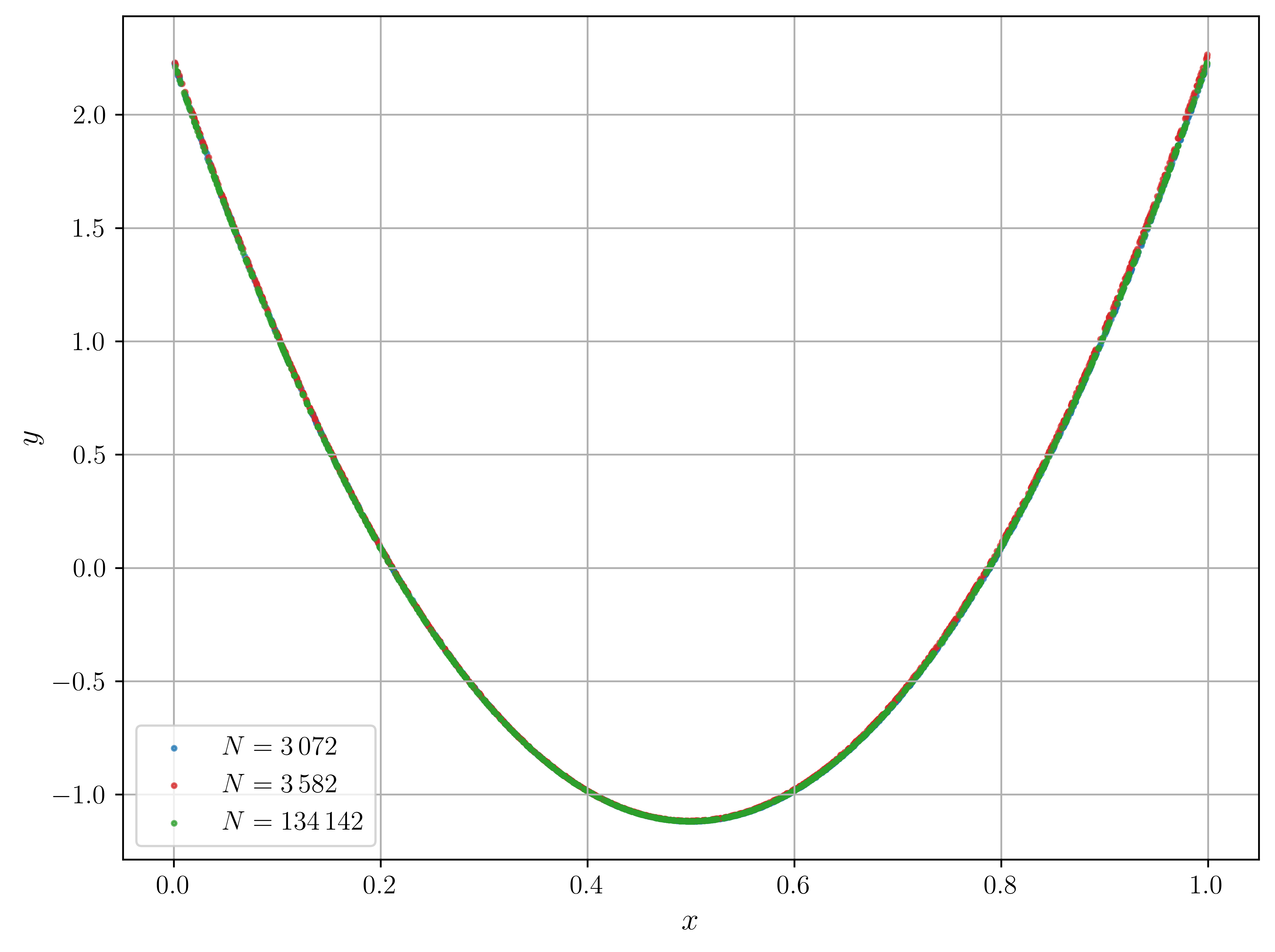}\qquad
\includegraphics[scale=0.3]{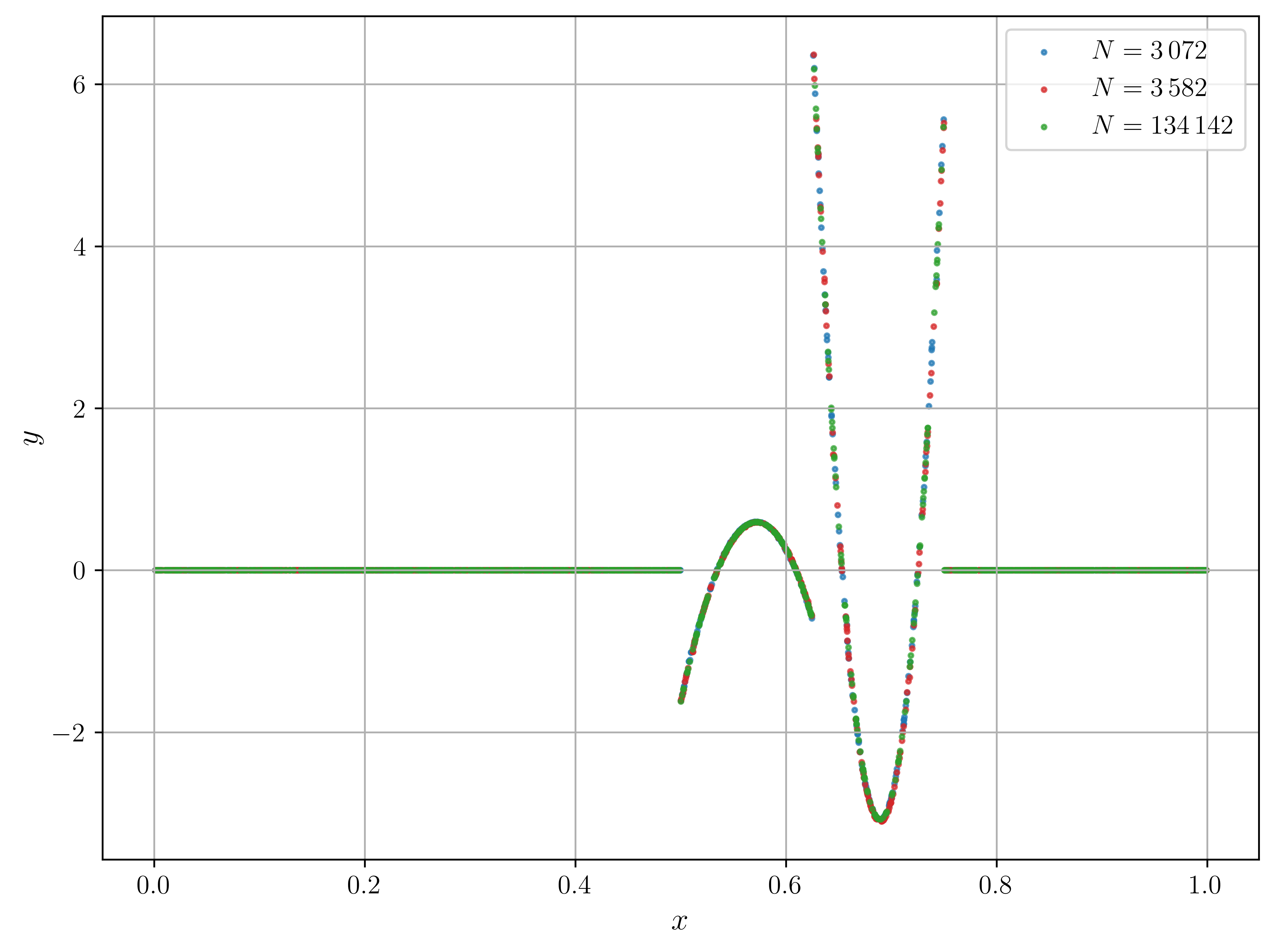}
\caption{\label{fig:visConv1D}Visualization of the samplet convergence in 
  \(d=1\) for increasing
number of samples. A scaling distribution is shown on the left and a samplet 
on the right.}
\end{figure}
\begin{figure}[htb]
\centering
\includegraphics[scale=0.35]{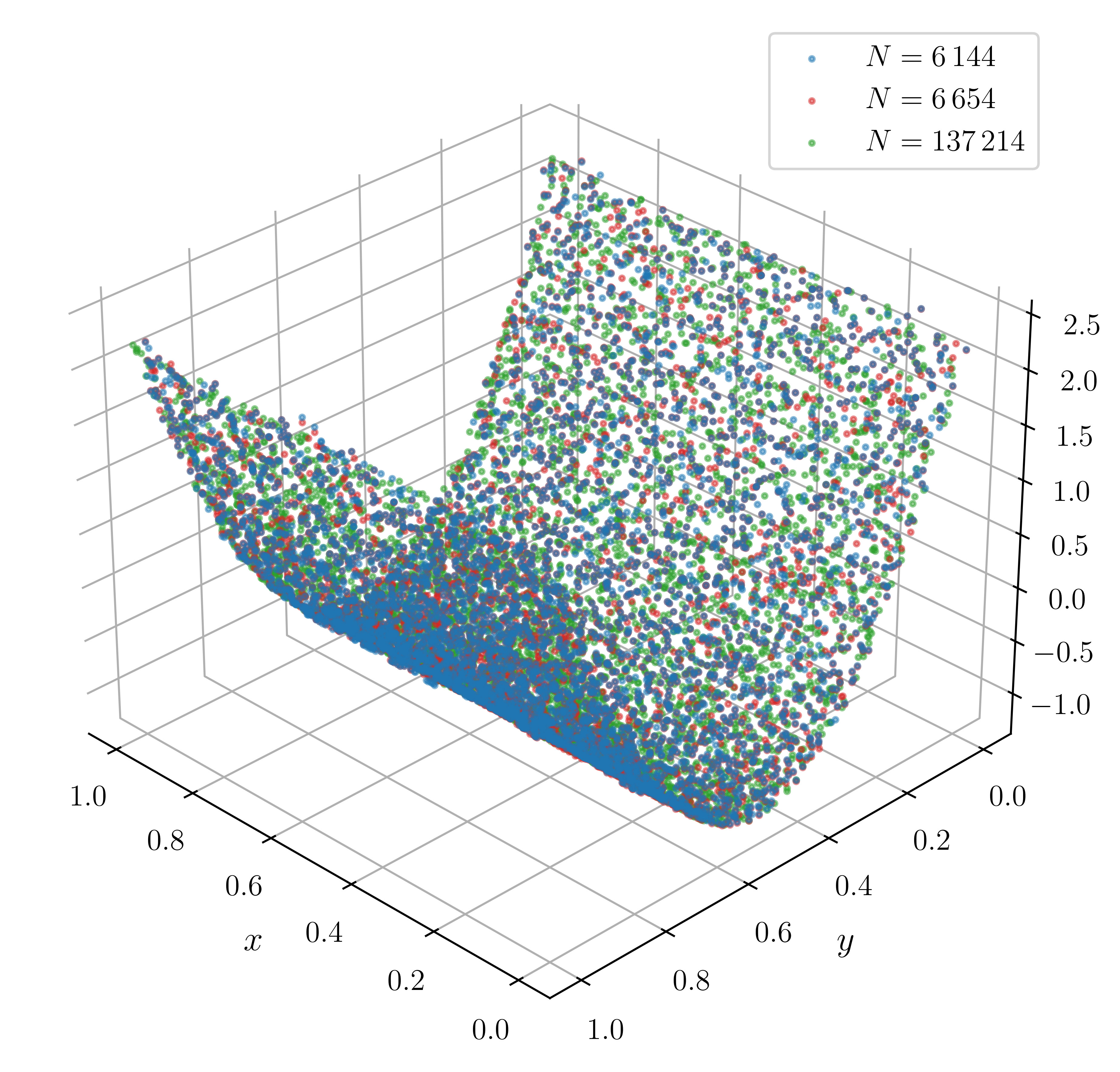}\qquad
\includegraphics[scale=0.35]{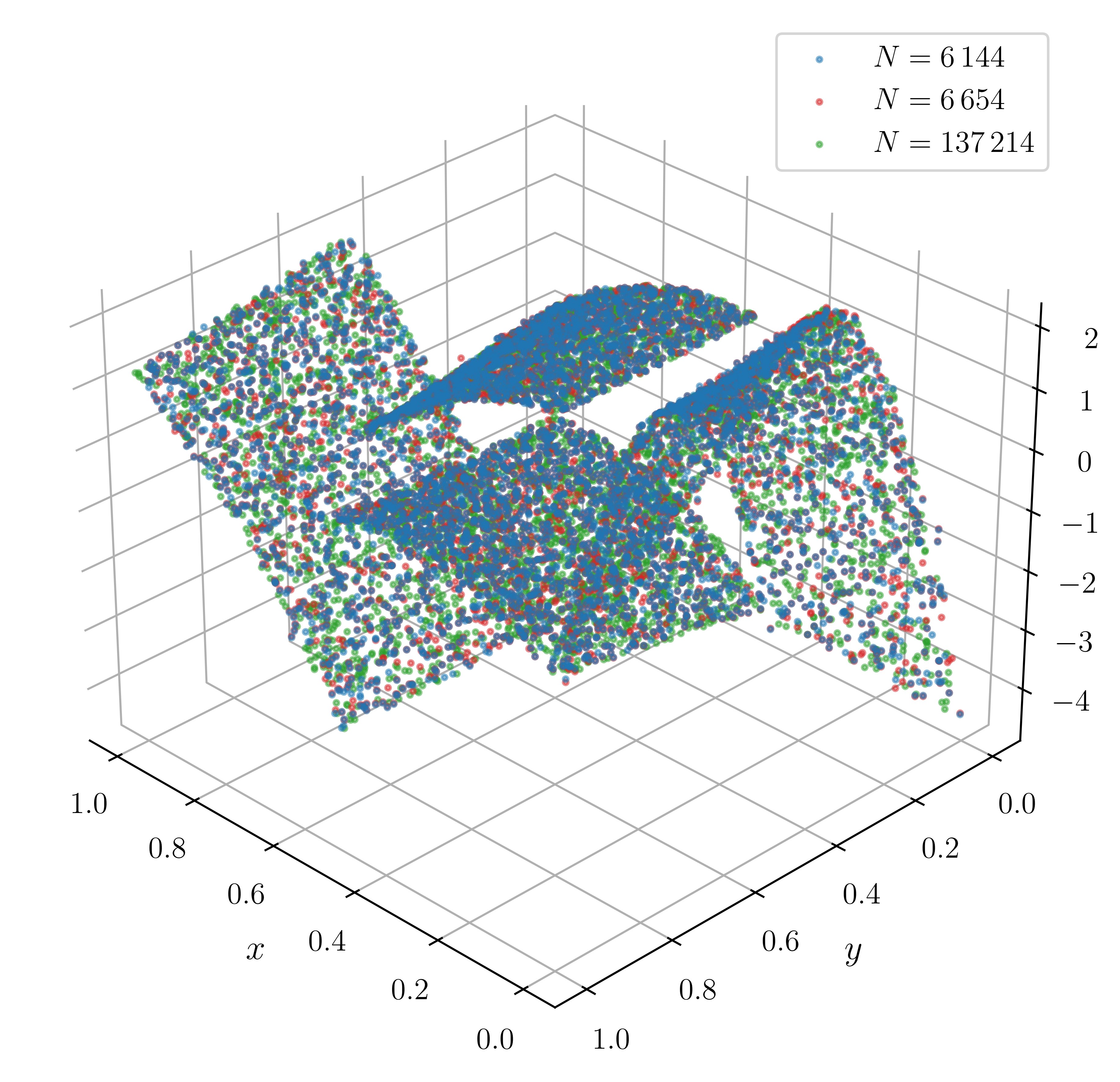}
\caption{\label{fig:visConv2D}Visualization of the samplet convergence in 
  \(d=2\) for increasing
number of samples. A scaling distribution is shown on the left and a samplet 
on the right.}
\end{figure}

To study the convergence, we employ an approximation with a larger number 
of samples as a reference. In all cases, we approximately use 
\(5.37\cdot 10^8\) samples for the computation
of the references. The exact numbers are given in Table~\ref{tab:refSamples}.
This number is required to have sufficiently many samples per leaf to enter 
the asymptotic regime.

\begin{table}[htb]
\centering
\begin{tabular}{l|c|c|c}
 & $d=1$ & $d = 2$ & $d = 3$\\\hline
Random &$536\,873\,982\ (521\,903)$ &$536\,877\,054\ (521\,853)$ 
       & $536\,876\,030\ (1\,045\,541)$\\\hline
Halton & $536\,874\,982\ (524\,291)$ & $536\,879\,054\ (524\,291)$ 
       & $536\,878\,030\ (1\,048\,578)$
\end{tabular}
\caption{\label{tab:refSamples}Number of samples \(N\) used as a reference 
  for error computation. Numbers in parentheses correspond to the (average) 
  minimum leaf sizes for
random points and Halton points.}
\end{table}

To benchmark the convergence, we consider the average
projection error of the filter coefficients of all non-leaf clusters, 
where we distinguish
between scaling distributions and samplets. In each cluster, the projection 
error is computed
as
\[
e^\tau_{\bs\Xi}\isdef\frac{\|{\bs Q}^\tau_{\bs\Xi}
-{\bs Q}^\tau_{{\bs\Xi},\mathrm{ref}}({\bs Q}^\tau_{{\bs\Xi},
\mathrm{ref}})^\intercal{\bs Q}^\tau\|_F}
{\|{\bs Q}^\tau_{{\bs\Xi},\mathrm{ref}}\|_F},
\quad{\bs\Xi}\in\{\bs\Phi,\bs\Sigma\}.
\]
The reported error is then 
\(\frac{1}{|\Tcal\setminus\Lcal(\Tcal)|}
\sum_{\tau\in\Tcal\setminus\Lcal(\Tcal)}e^\tau_{\bs\Xi}\),
\({\bs\Xi}\in\{\bs\Phi,\bs\Sigma\}\). 
On the left of Figure~\ref{fig:MCcnv2D}, we see the convergence
of the filter coefficients of the scaling distributions, while the errors
of the samplets' filter
coefficients are shown on the right. The error is computed by averaging 10 
runs and the error bars indicate one standard deviation.
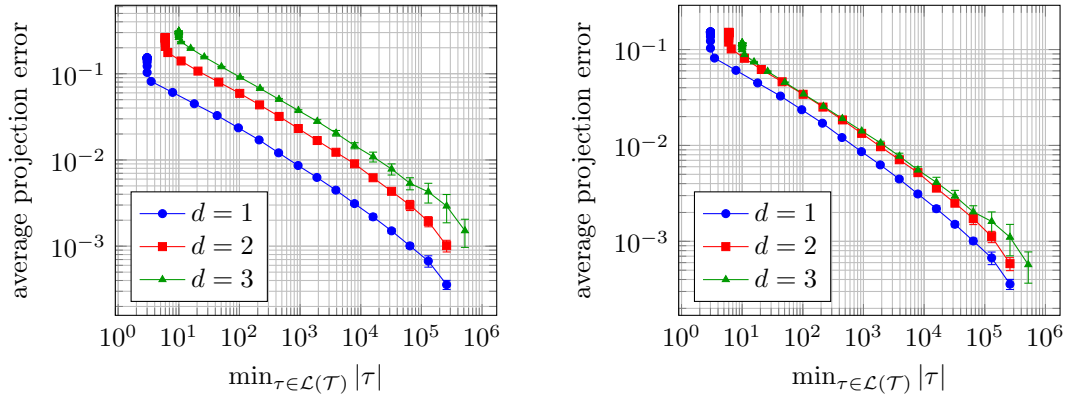
\begin{figure}[htb]
  \centering
  \begin{tikzpicture}
  \begin{axis}[
    width=0.44\textwidth,
    xlabel={$\min_{\tau\in\Lcal(\Tcal)}|\tau|$},
    ylabel={average projection error},
    grid=both,
    legend style={at={(0.04,0.4)},anchor=north west},
    error bars/y dir=both,
    error bars/y explicit,
        xmode=log,
        ymode=log,
        mark size=1.5pt
  ]
    \addplot[
      color=blue,
      mark=*,
    ]
      table[
        header=false,
        x index=1,         
        y index=2,         
        y error index=3,   
      ]{./data/ave_sd_error_1_MC_stats.txt};
    \addlegendentry{$d=1$};

    \addplot[
      color=red,
      mark=square*,
    ]
      table[
        header=false,
        x index=1,
        y index=2,
        y error index=3,
      ]{./data/ave_sd_error_2_MC_stats.txt};
    \addlegendentry{$d=2$};

    \addplot[
      color=green!60!black,
      mark=triangle*,
    ]
      table[
        header=false,
        x index=1,
        y index=2,
        y error index=3,
      ]{./data/ave_sd_error_3_MC_stats.txt};
    \addlegendentry{$d=3$};
  \end{axis}
\end{tikzpicture}\qquad
  \begin{tikzpicture}
  \begin{axis}[
    width=0.44\textwidth,
    xlabel={$\min_{\tau\in\Lcal(\Tcal)}|\tau|$},
    ylabel={average projection error},
    grid=both,
    legend style={at={(0.04,0.4)},anchor=north west},
    error bars/y dir=both,
    error bars/y explicit,
        xmode=log,
        ymode=log,
        mark size=1.5pt
  ]
    \addplot[
      color=blue,
      mark=*,
    ]
      table[
        header=false,
        x index=1,         
        y index=2,         
        y error index=3,   
      ]{./data/ave_smp_error_1_MC_stats.txt};
    \addlegendentry{$d=1$};

    \addplot[
      color=red,
      mark=square*,
    ]
      table[
        header=false,
        x index=1,
        y index=2,
        y error index=3,
      ]{./data/ave_smp_error_2_MC_stats.txt};
    \addlegendentry{$d=2$};

    \addplot[
      color=green!60!black,
      mark=triangle*,
    ]
      table[
        header=false,
        x index=1,
        y index=2,
        y error index=3,
      ]{./data/ave_smp_error_3_MC_stats.txt};
    \addlegendentry{$d=3$};
  \end{axis}
\end{tikzpicture}
\caption{\label{fig:MCcnv2D}Average projection error of filter coefficients
  of scaling distribution (left) and samplets (right) taken over all non-leaf
  clusters for increasing \(N\) and uniformly random points.
  The error bars denote the standard deviation
from 10 runs.}
\end{figure}
The rate of convergence resembles the typical Monte-Carlo rate \(N^{-1/2}\)
for \(d=1,2,3\). The standard deviation between the different runs is relatively
small. 

Figure~\ref{fig:QMCcnv2D} depicts the corresponding errors in case of 
Halton points.
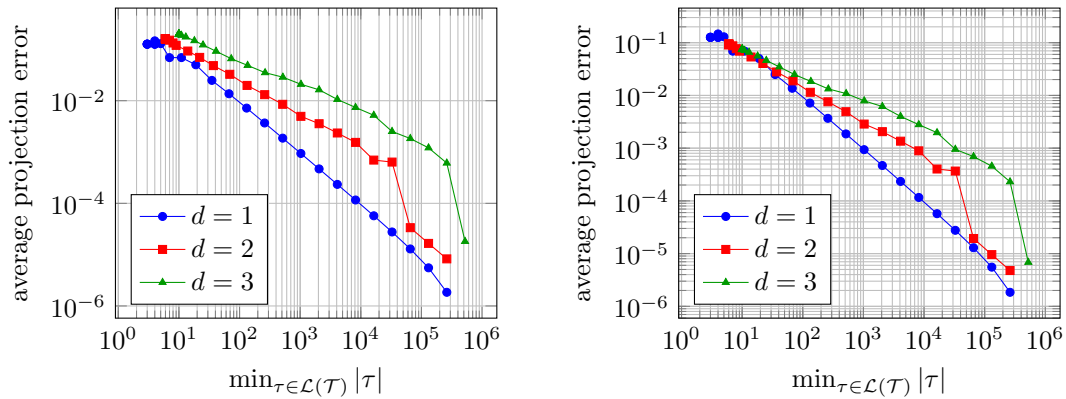
\begin{figure}[htb]
  \centering
  \begin{tikzpicture}
  \begin{axis}[
    width=0.44\textwidth,
    xlabel={$\min_{\tau\in\Lcal(\Tcal)}|\tau|$},
    ylabel={average projection error},
    grid=both,
    legend style={at={(0.04,0.4)},anchor=north west},
    error bars/y dir=both,
    error bars/y explicit,
        xmode=log,
        ymode=log,
        mark size=1.5pt
  ]
    \addplot[
      color=blue,
      mark=*,
    ]
      table[
        header=false,
        x index=1,         
        y index=2,         
      ]{./data/ave_sd_error_1_QMC_stats.txt};
    \addlegendentry{$d=1$};

    \addplot[
      color=red,
      mark=square*,
    ]
      table[
        header=false,
        x index=1,
        y index=2,
      ]{./data/ave_sd_error_2_QMC_stats.txt};
    \addlegendentry{$d=2$};

    \addplot[
      color=green!60!black,
      mark=triangle*,
    ]
      table[
        header=false,
        x index=1,
        y index=2,
      ]{./data/ave_sd_error_3_QMC_stats.txt};
    \addlegendentry{$d=3$};
  \end{axis}
\end{tikzpicture}\qquad
  \begin{tikzpicture}
  \begin{axis}[
    width=0.44\textwidth,
    xlabel={$\min_{\tau\in\Lcal(\Tcal)}|\tau|$},
    ylabel={average projection error},
    grid=both,
    legend style={at={(0.04,0.4)},anchor=north west},
    error bars/y dir=both,
    error bars/y explicit,
        xmode=log,
        ymode=log,
        mark size=1.5pt
  ]
    \addplot[
      color=blue,
      mark=*,
    ]
      table[
        header=false,
        x index=1,         
        y index=2,         
      ]{./data/ave_smp_error_1_QMC_stats.txt};
    \addlegendentry{$d=1$};

    \addplot[
      color=red,
      mark=square*,
    ]
      table[
        header=false,
        x index=1,
        y index=2,
      ]{./data/ave_smp_error_2_QMC_stats.txt};
    \addlegendentry{$d=2$};

    \addplot[
      color=green!60!black,
      mark=triangle*,
    ]
      table[
        header=false,
        x index=1,
        y index=2,
      ]{./data/ave_smp_error_3_QMC_stats.txt};
    \addlegendentry{$d=3$};
  \end{axis}
\end{tikzpicture}
\caption{\label{fig:QMCcnv2D}Average projection error of filter coefficients
  of scaling distribution (left) and samplets (right) taken over all non-leaf
clusters for increasing \(N\) for Halton points}
\end{figure}
As expected, the convergence using quasi-random points is faster than
for random points. However, for smaller numbers of samples per leaf, we 
observe a rate that is significantly worse then the expected rate of 
\(N^{-1+\varepsilon}\), \(0<\varepsilon\ll 1\), for \(d=2,3\). The error then
suddenly drops for larger number of samples, which suggests that
some clusters are in the beginning poorly resolved by the sample points.

\section{Conclusions}\label{sec:conclusion}
In a probabilistic framework, we have developed a continuous limit theory
for samplets and characterized the deterministic multiresolution structure
that arises as sampling becomes dense. Specifically, we have proven the
uniform convergence of discrete orthogonal polynomials to their continuous
counterparts, as well as the compatibility of this limiting procedure with
the recursive definition of samplets. Building on these results, we have
established the convergence of the samplet basis to a continuous multivariate
framework of compactly supported signed measures with broken polynomial
densities, which constitute polynomial multiwavelets in the infinite data 
limit. We have also discussed how the theory extends from total-degree spaces
to more general downward closed index sets, thereby accommodating anisotropic
moment conditions aligned with sparse and high-dimensional approximation goals.
In the case of symmetric binary splits of the unit hypercube, the construction
recovers Alpert-type multiwavelets, including their symmetry and scale- and
partition- independent filter coefficients, without resorting to tensor-product
constructions. Finally, we have studied the convergence of samplets numerically,
for both random and low-discrepancy data sites. The illustrative experiments 
quantitatively corroborate the theoretical convergence results.

\section*{Acknowledgment}
The authors have been funded by the Swiss National Science
Foundation starting grant
``Multiresolution methods for unstructured data'' (TMSGI2\_211684).
\bibliographystyle{plain}
\bibliography{literature}

@article{GIvA17,
  author  = {Geronimo, J.~S. and Iliev, P. and Van Assche, W.},
  title   = {Alpert multiwavelets and {L}egendre-{A}ngelesco multiple orthogonal polynomials},
  journal = {SIAM J. Math. Anal.},
  year    = {2017},
  volume  = {49},
  number  = {1},
  pages   = {626--645},
}

@article{DDLY00,
  author  = {D.~L. Donoho and N. Dyn and D. Levin and T.~P.-Y. Yu},
  title   = {Smooth Multiwavelet Duals of {A}lpert Bases by 
             Moment-Interpolating Refinement},
  journal = {Appl. Comput. Harmon. Anal.},
  volume  = {9},
  number  = {2},
  pages   = {166--203},
  year    = {2000},
}

@article{CCS14,
  author  = {Chkifa, A. and Cohen, A. and Schwab, C.},
  title   = {High-Dimensional Adaptive Sparse Polynomial Interpolation and
             Applications to Parametric {PDE}s},
  journal = {Found. Comput. Math.},
  year    = {2014},
  volume  = {14},
  number  = {4},
  pages   = {601--633},
  doi     = {10.1007/s10208-013-9154-z}
}

@article{GG03,
  author  = {T. Gerstner and M. Griebel},
  title   = {Dimension-Adaptive Tensor-Product Quadrature},
  journal = {Computing},
  year    = {2003},
  volume  = {71},
  number  = {1},
  pages   = {65--87},
}

@book{D02,
	author = {Dudley, R.~M.},
	title = {Real Analysis and Probability},
	publisher = {Cambridge University Press},
	address = {Cambridge},
	year = {2002}
}

@article{HM22,
	author = {Harbrecht, H. and Multerer, M.},
	title = {Samplets: Construction and Scattered Data Compression},
	journal = {J. Comput. Phys.},
	year = {2022},
	volume = {471},
	pages = {111616},
}

@article{A93,
	author = {Alpert, B.~K.},
	title = {A Class of Bases in {$L^2$} for the Sparse Representation of Integral Operators},
	journal = {SIAM J. Math. Anal.},
	year = {1993},
	volume = {24},
	number = {1},
	pages = {246--262}
}

@article{ABCR93,
	author = {Alpert, B.~K. and Beylkin, G. and Coifman, R.~R. and Rokhlin, V.},
	title = {Wavelet-Like Bases for the Fast Solution of Second-Kind Integral Equations},
	journal = {SIAM J. Sci. Comput.},
	year = {1993},
	volume = {14},
	number = {1},
	pages = {159--184}
}

@book{D92,
	author = {Daubechies, I.},
	title = {Ten Lectures on Wavelets},
	publisher = {SIAM},
	address = {Philadelphia, PA},
	year = {1992},
}

@book{M08,
	author = {Mallat, S.},
	title = {A Wavelet Tour of Signal Processing: The Sparse Way},
	edition = {3rd},
	publisher = {Academic Press},
	address = {Burlington, MA},
	year = {2008},
}

@book{M93,
	author = {Meyer, Y.},
	title = {Wavelets and Operators},
	series = {Camb. Stud. Adv. Math.},
	volume = {37},
	publisher = {Cambridge University Press},
	address = {Cambridge},
	year = {1993},
}

@article{D97,
	author = {Dahmen, W.},
	title = {Multiscale and Wavelet Methods for Operator Equations},
	journal = {Acta Numer.},
	year = {1997},
	volume = {6},
	pages = {55--228}
}

@article{EGMQ25,
	author = {Elefante, G. and Giacchi, G. and Multerer, M. and Quizi, J.},
	title = {Bespoke multiresolution analysis of graph signals},
	journal = {arXiv},
	year = {2025},
	note = {Preprint arXiv:2507.19181}
}

@article{BCR91,
	author = {Beylkin, G. and Coifman, R. and Rokhlin, V.},
	title = {Fast wavelet transforms and numerical algorithms {I}},
	journal = {Comm. Pure Appl. Math.},
	volume = {44},
	number = {2},
	pages = {141-183},
	year = {1991}
}

@ARTICLE{RV91,
	author = {Rioul, O. and Vetterli, M.},
	journal = {IEEE Signal Process. Mag.},
	title = {Wavelets and signal processing},
	year = {1991},
	volume = {8},
	number = {4},
	pages = {14--38},
}

@article{B92,
	author = {Beylkin, G.},
	title = {On the Representation of Operators in Bases of Compactly Supported Wavelets},
	journal = {SIAM J. Numer. Anal.},
	volume = {29},
	number = {6},
	pages = {1716--1740},
	year = {1992}
}

@article{DV98,
	author = {DeVore, R.~A.},
	title = {Nonlinear approximation},
	journal = {Acta Numer.},
	year = {1998},
	volume = {7},
	pages = {51--150},
}

@article{V58,
	author = {V.~S. Varadarajan},
     TITLE = {On the convergence of sample probability distributions},
   JOURNAL = {Sankhya},
  FJOURNAL = {Sankhya. The Indian Journal of Statistics.},
    VOLUME = {19},
      YEAR = {1958},
     PAGES = {23--26},
      ISSN = {0036-4452},
   MRCLASS = {60.00},
  MRNUMBER = {94839},
MRREVIEWER = {M.\ Lo\`eve},
}

@book {S67,
	AUTHOR = {Szeg\H{o}, G.},
	TITLE = {Orthogonal Polynomials},
	SERIES = {American Mathematical Society Colloquium Publications, Vol.
	XXIII},
	EDITION = {Fourth},
	PUBLISHER = {American Mathematical Society, Providence, RI},
	YEAR = {1975},
	PAGES = {xiii+432},
	MRCLASS = {42A52 (33A65)},
	MRNUMBER = {372517},
}

@book {C78,
	AUTHOR = {Chihara, T.~S.},
	TITLE = {An Introduction to Orthogonal Polynomials},
	SERIES = {Mathematics and its Applications, Vol. 13},
	PUBLISHER = {Gordon and Breach Science Publishers, New York-London-Paris},
	YEAR = {1978},
	PAGES = {xii+249},
	ISBN = {0-677-04150-0},
	MRCLASS = {42A52},
	MRNUMBER = {481884},
	MRREVIEWER = {A. G. Law},
}

@article{A98,
title = {Multiple orthogonal polynomials},
journal = {J. Comput. Appl. Math.},
volume = {99},
number = {1},
pages = {423-447},
year = {1998},
issn = {0377-0427},
author = {A.~I. Aptekarev},
}

@article{BO16,
	author = {Basu, K. and Owen, A.~B.},
	title = {Transformations and {H}ardy--{K}rause Variation},
	journal = {SIAM J. Numer. Anal.},
	volume = {54},
	number = {3},
	pages = {1946-1966},
	year = {2016},
	doi = {10.1137/15M1052184},
}

@incollection {C98,
    AUTHOR = {Caflisch, R.~E.},
     TITLE = {Monte {C}arlo and quasi-{M}onte {C}arlo methods},
 BOOKTITLE = {Acta Numerica, 1998},
    SERIES = {Acta Numer.},
    VOLUME = {7},
     PAGES = {1--49},
 PUBLISHER = {Cambridge Univ. Press, Cambridge},
      YEAR = {1998},
      ISBN = {0-521-64316-3},
   MRCLASS = {65C05 (76M35 76P05)},
  MRNUMBER = {1689431},
MRREVIEWER = {J.\ Spanier},
       DOI = {10.1017/S0962492900002804},
       URL = {https://doi.org/10.1017/S0962492900002804},
}

@book {W05,
    AUTHOR = {Wendland, H.},
     TITLE = {Scattered Data Approximation},
    SERIES = {Cambridge Monographs on Applied and Computational Mathematics},
    VOLUME = {17},
 PUBLISHER = {Cambridge University Press, Cambridge},
      YEAR = {2005},
     PAGES = {x+336},
      ISBN = {978-0521-84335-5; 0-521-84335-9},
   MRCLASS = {41-02 (41A10 41A63 65D10)},
  MRNUMBER = {2131724},
}

@article {M20,
    AUTHOR = {Mityagin, B.~S.},
     TITLE = {The zero set of a real analytic function},
   JOURNAL = {Mat. Zametki},
  FJOURNAL = {Matematicheskie Zametki},
    VOLUME = {107},
      YEAR = {2020},
    NUMBER = {3},
     PAGES = {473--475},
      ISSN = {0025-567X,2305-2880},
   MRCLASS = {26E05 (26B10)},
  MRNUMBER = {4070868},
MRREVIEWER = {Anna\ Valette-Stasica},
       DOI = {10.4213/mzm12620},
       URL = {https://doi.org/10.4213/mzm12620},
}

@book {K20,
    AUTHOR = {Klenke, A.},
     TITLE = {Probability {T}heory - A {C}omprehensive {C}ourse},
    SERIES = {Universitext},
      NOTE = {Third edition},
 PUBLISHER = {Springer},
 ADDRESS={Cham},
      YEAR = {2020},
     PAGES = {xiv+716},
}

@article{balazs2024construction,
  title={Construction of generalized samplets in {B}anach spaces},
  author={Balazs, P. and Multerer, M.},
  journal = {arXiv},
  note = {Preprint arXiv:2412.00954},
  year={2025}
}

@article {DL04,
    AUTHOR = {Dekel, S. and Leviatan, D.},
     TITLE = {The {B}ramble-{H}ilbert lemma for convex domains},
   JOURNAL = {SIAM J. Math. Anal.},
  FJOURNAL = {SIAM Journal on Mathematical Analysis},
    VOLUME = {35},
      YEAR = {2004},
    NUMBER = {5},
     PAGES = {1203--1212},
      ISSN = {0036-1410,1095-7154},
  MRNUMBER = {2050198},
       DOI = {10.1137/S0036141002417589},
       URL = {https://doi.org/10.1137/S0036141002417589},
}

@book {C03,
    AUTHOR = {Cohen, A.},
     TITLE = {Numerical {A}nalysis of {W}avelet {M}ethods},
    SERIES = {Studies in Mathematics and its Applications},
    VOLUME = {32},
 PUBLISHER = {North-Holland Publishing Co., Amsterdam},
      YEAR = {2003},
     PAGES = {xviii+336},
      ISBN = {0-444-51124-5},
   MRCLASS = {65T60 (42C40 94A08)},
  MRNUMBER = {1990555},
}

@article {BH70,
    AUTHOR = {Bramble, J. H. and Hilbert, S. R.},
     TITLE = {Estimation of linear functionals on {S}obolev spaces with
              application to {F}ourier transforms and spline interpolation},
   JOURNAL = {SIAM J. Numer. Anal.},
  FJOURNAL = {SIAM Journal on Numerical Analysis},
    VOLUME = {7},
      YEAR = {1970},
     PAGES = {112--124},
      ISSN = {0036-1429},
   MRCLASS = {65.20 (41.00)},
  MRNUMBER = {263214},
MRREVIEWER = {J.\ W.\ Jerome},
       DOI = {10.1137/0707006},
       URL = {https://doi.org/10.1137/0707006},
}
\appendix

\section{Rates, discrepancy, and deterministic bounds}
\label{app:rates_discrepancy}

\subsection{Star discrepancy and the Koksma-Hlawka inequality}
Let \(D=[0,1]^d\) and \(X_N=\{\bs x_1,\ldots,\bs x_N\}\subset D\). 
The star discrepancy of \(X_N\) is defined as
\[
\mathrm{disc}_N^*(X_N)\isdef
\sup_{\bs t\in[0,1]^d}
\left|
\frac{1}{N}\sum_{n=1}^N\mathbbm 1_{[\bs 0,\bs t)}(\bs x_n)-\prod_{i=1}^d t_i
\right|,
\quad
[\bs 0,\bs t)\isdef \prod_{i=1}^d[0,t_i).
\]
If \(f\colon D\to\Rbb\) has finite Hardy-Krause variation \(V_{\mathrm{HK}}(f)\), 
then the Koksma-Hlawka inequality, see, for example, \cite{C98}, 
yields
\begin{equation}\label{eq:KH_app}
\left|
\frac{1}{N}\sum_{n=1}^N f(\bs x_n)-\int_D f(\bs x)\d\bs x
\right|
\le
\mathrm{disc}_N^*(X_N)V_{\mathrm{HK}}(f).
\end{equation}

\subsection{Hardy-Krause variation of polynomials}
\label{app:BVHK_polys}
We report on a sufficient condition for bounded Hardy-Krause variation and 
apply it to polynomials.

For a nonempty subset \(u\subseteq\{1,\ldots,d\}\), let
\(\partial_u f\) denote the mixed first-order partial derivative obtained 
by differentiating once with respect to every variable indexed by \(u\). 
For \(\bs x_u\in[0,1]^{|u|}\), define \(\bs x_{u}:\bs 1_{-u}\in[0,1]^d\) 
by setting
\[
(\bs x_{u}:\bs 1_{-u})_j=x_j \quad\text{if }j\in u,
\quad
(\bs x_{u}:\bs 1_{-u})_j=1 \quad\text{if }j\notin u.
\]

The details concerning the following lemmas can be found in~\cite{BO16}.

\begin{lemma}\label{lem:smooth_BVHK}
Let \(f\colon D\to\Rbb\) be such that \(\partial_u f\) exists and is 
Lebesgue integrable on \(D\) for every nonempty subset
\(u\subseteq\{1,\ldots,d\}\). Then, \(f\) has finite Hardy-Krause 
variation and satisfying
\begin{equation}\label{eq:BO16_HK_bound}
V_{\mathrm{HK}}(f)
\le
\sum_{\emptyset\neq u\subseteq\{1,\ldots,d\}}
\int_{[0,1]^{|u|}}
\bigl|\partial_u f(\bs x_{u}:\bs 1_{-u})\bigr|\d\bs x_u.
\end{equation}
\end{lemma}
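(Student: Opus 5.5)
We need to prove Lemma \ref{lem:smooth_BVHK}: if every mixed partial \(\partial_u f\), \(\emptyset\neq u\subseteq\{1,\ldots,d\}\), exists and is integrable, then \(V_{\mathrm{HK}}(f)\) is bounded by the sum of the integrals of \(|\partial_u f|\) over the faces anchored at \(\bs 1\). The idea is to compute the Vitali variation of each restriction and sum.

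\textbf{Step 1: the definition.} We use the definition of Hardy--Krause variation anchored at \(\bs 1\),
\[
V_{\mathrm{HK}}(f)=\sum_{\emptyset\neq u\subseteq\{1,\ldots,d\}} V_{\mathrm{Vit}}\big(f(\cdot_u:\bs 1_{-u});[0,1]^{|u|}\big).
\]
Here \(V_{\mathrm{Vit}}\) denotes the Vitali variation: the supremum over rectangular grid partitions of the sum of absolute values of the alternating \(|u|\)-fold differences \(\Delta(f;R)\) over the cells \(R\). With this definition, it suffices to show, for each fixed \(u\),
\[
V_{\mathrm{Vit}}\big(g;[0,1]^{|u|}\big)\le\int_{[0,1]^{|u|}}|\partial_u g|\d\bs x_u,
\qquad g\isdef f(\cdot_u:\bs 1_{-u}).
\]
Note that \(\partial_u g=(\partial_u f)(\cdot_u:\bs 1_{-u})\).

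\textbf{Step 2: the key identity.} For any box \(R=\prod_{i\in u}[a_i,b_i]\), the alternating difference satisfies
\[
\Delta(g;R)=\int_R\partial_u g\d\bs x_u.
\]
We prove this by applying the one-dimensional fundamental theorem of calculus successively in each variable of \(u\), using Fubini to interchange the integrals.

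\textbf{Step 3: conclusion.} For any grid partition,
\[
\sum_R|\Delta(g;R)|\le\sum_R\int_R|\partial_u g|=\int|\partial_u g|.
\]
Taking the supremum over partitions bounds the Vitali variation of \(g\), and summing over \(u\) gives \eqref{eq:BO16_HK_bound}.

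\textbf{Main obstacle.} The difficulty is justifying Step 2 under only the hypothesis that \(\partial_u f\) exists and is Lebesgue integrable on \(D\). Two issues arise.

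First, iterating the fundamental theorem of calculus needs absolute continuity of the intermediate partial derivatives \(\partial_v g\) for \(v\subsetneq u\) in the remaining variable. I would obtain this from the existence and integrability of all lower-order mixed partials, which the hypothesis grants since every nonempty \(u\) is included. Each intermediate step is then \(\partial_v g(\ldots,b_i,\ldots)-\partial_v g(\ldots,a_i,\ldots)=\int_{a_i}^{b_i}\partial_{v\cup\{i\}}g\d x_i\), valid for a.e. value of the other coordinates and extended by continuity.

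Second, integrability on \(D\) must transfer to integrability on the face \(\{\bs x_{-u}=\bs 1\}\). I would sidestep this by, if needed, defining the right-hand side of \eqref{eq:BO16_HK_bound} as possibly infinite, in which case the inequality is trivial. Alternatively, following \cite{BO16}, I would read the hypothesis as including integrability on these faces.

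For polynomials, the intended application, all partials are continuous and Step 2 holds by the classical fundamental theorem of calculus, so none of these technicalities arise.
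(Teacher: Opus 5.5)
The paper gives no argument for this lemma and only refers to \cite{BO16}. Your Steps 1--3 are the standard route to such a bound, and they are complete whenever the mixed partials are continuous, which covers the polynomial case used in Lemma~\ref{lem:poly_BVHK}. Under the hypothesis as stated, however, your handling of the obstacle does not work, for two reasons.

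First, Step 2 is not justified. \(\Delta(g;R)\) involves \(g\) only on the slices \(x_i\in\{a_i,b_i\}\), which form a null set. Identities valid for a.e.\ value of the other coordinates therefore say nothing there, and the hypothesis gives no continuity of \(\partial_v g\) in those coordinates to ``extend by continuity''. Integrability of the lower-order partials on \(D\) is also beside the point: absolute continuity of \(x_i\mapsto\partial_v g\) would need integrability of the \emph{higher}-order partial on that particular slice. Some real use of everywhere differentiability is unavoidable, since the Cantor function has an a.e.\ vanishing derivative but variation \(1\). The repair is to induct on alternating differences rather than on individual terms. Let \(i\) be the variable differentiated first in \(\partial_u\), write \(R=[a_i,b_i]\times R'\), and set \(F(x_i)\isdef\Delta_{u\setminus\{i\}}\big(g(\ldots,x_i,\ldots);R'\big)\). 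Then \(F\) is differentiable everywhere with \(F'(x_i)=\Delta_{u\setminus\{i\}}\big(\partial_i g(\ldots,x_i,\ldots);R'\big)\). By Fubini and the induction hypothesis, this equals \(\int_{R'}\partial_u g\) for a.e.\ \(x_i\). So \(F'\), being a derivative and hence measurable, is a.e.\ equal to an \(L^1\) function and is Lebesgue integrable. The theorem that an everywhere differentiable function with integrable derivative satisfies the fundamental theorem of calculus (e.g., Rudin, \emph{Real and Complex Analysis}, Thm.~7.21) then gives \(\Delta(g;R)=F(b_i)-F(a_i)=\int_R\partial_u g\). This uses only everywhere existence of the partials and \(\partial_u g\in L^1\) on the face.

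Second, your sidestep for the face integrals proves the inequality but not the asserted finiteness of \(V_{\mathrm{HK}}(f)\). If a face integral at \(\bs 1\) is infinite, your argument yields nothing, and building face integrability into the hypothesis changes the statement. Finiteness does follow from integrability on \(D\), but it needs an extra step. By Fubini, pick \(\bs y\in D\) such that \(\partial_v f(\cdot_v:\bs y_{-v})\) is integrable on its face for every \(v\); this is possible because there are finitely many \(v\). For a box \(R_u\), expand \(\Delta_u\big(f(\cdot_u:\bs 1_{-u});R_u\big)\) by inclusion--exclusion in the coordinates of \(-u\), between \(\bs y_{-u}\) and \(\bs 1_{-u}\). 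This gives the Vitali differences of \(f(\cdot_{u\cup w}:\bs y_{-(u\cup w)})\) over the boxes \(R_u\times\prod_{j\in w}[y_j,1]\), \(w\subseteq -u\). Summing over a grid partition and applying the identity above bounds the Vitali variation of the face at \(\bs 1\) by the finite quantity \(\sum_{w\subseteq -u}\int|\partial_{u\cup w}f(\bs x_{u\cup w}:\bs y_{-(u\cup w)})|\d\bs x_{u\cup w}\).
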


\begin{lemma}\label{lem:poly_BVHK}
Let \(p\colon D\to\Rbb\) be a polynomial on \(D\). Then \(V_{\mathrm{HK}}(p)<\infty\).
Moreover, for any two such polynomials \(p,q\), we have \(V_{\mathrm{HK}}(pq)<\infty\).
\end{lemma}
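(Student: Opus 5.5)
The plan is to apply Lemma~\ref{lem:smooth_BVHK} directly. For the first assertion, let \(p\) be a polynomial on \(D=[0,1]^d\). For every nonempty \(u\subseteq\{1,\ldots,d\}\), the mixed derivative \(\partial_u p\) exists, since polynomials are \(C^\infty\). Moreover, differentiating a polynomial yields again a polynomial, so \(\partial_u p\) is a polynomial in \(\bs x\). It is therefore continuous on the compact set \(D\), hence bounded, and Lebesgue integrable on \(D\).

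The right-hand side of~\eqref{eq:BO16_HK_bound} needs a little care, because it involves the restrictions \(\partial_u p(\bs x_u:\bs 1_{-u})\). Fixing the coordinates outside \(u\) at \(1\) keeps \(\partial_u p(\bs x_u:\bs 1_{-u})\) a polynomial in \(\bs x_u\). It is thus continuous on \([0,1]^{|u|}\) and bounded by \(\|\partial_u p\|_{L^\infty(D)}\). Each of the finitely many integrals in~\eqref{eq:BO16_HK_bound} is then at most \(\|\partial_u p\|_{L^\infty(D)}\), since \([0,1]^{|u|}\) has unit measure. Summing over the \(2^d-1\) nonempty subsets \(u\) gives
\[
V_{\mathrm{HK}}(p)\le\sum_{\emptyset\neq u\subseteq\{1,\ldots,d\}}\|\partial_u p\|_{L^\infty(D)}<\infty .
\]

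For the second assertion, the product \(pq\) of two polynomials is again a polynomial. The first part therefore applies verbatim to \(pq\) and gives \(V_{\mathrm{HK}}(pq)<\infty\). This is exactly what is needed to apply~\eqref{eq:KH_app} to the integrands \(\bs x^{\bs\alpha_i}\bs x^{\bs\alpha_j}\) and \(\pi_{\bs\alpha}\pi_{\bs\beta}\) appearing in the Gram matrices.

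There is no real obstacle here. The only point requiring care is to check that the hypotheses of Lemma~\ref{lem:smooth_BVHK} are understood as integrability on the faces \(\bs x_u:\bs 1_{-u}\), and not merely on \(D\). Since polynomial restrictions to faces remain polynomials, this follows from the same boundedness argument.
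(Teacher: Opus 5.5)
Your proof is correct and takes essentially the paper's route. The paper gives no separate argument and defers to Basu--Owen; their smoothness criterion is Lemma~\ref{lem:smooth_BVHK}, which you apply directly, using that mixed partials of polynomials and their face restrictions are polynomials, hence bounded on \([0,1]^d\), and that \(pq\) is again a polynomial.
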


\begin{remark}\label{rem:BVHK_cuboid}
If \(D=\prod_{i=1}^d[a_i,b_i]\) is a cuboid, then by an affine change of variables
\[
\bs x=\bs a+\mathrm{diag}(\bs b-\bs a)\bs y,
\quad \bs y\in[0,1]^d,
\]
one reduces boundedness of \(V_{\mathrm{HK}}\) on \(D\) to boundedness 
on \([0,1]^d\). In particular, every polynomial restricted to a cuboid 
has finite Hardy-Krause variation.
\end{remark}

\subsection{Coefficient and uniform bounds for empirical 
monic orthogonal polynomials}
\label{app:coeff_rates}

In this appendix we use the Hardy-Krause variation on $[0,1]^d$,
as in Lemma~\ref{lem:smooth_BVHK}, together with the
Koksma-Hlawka inequality~\eqref{eq:KH_app}, whose integral term is with
respect to Lebesgue measure. Consequently, Lemma~\ref{lem:coeff_rate_multivariate_app}
and Corollary~\ref{cor:rate_orthonormal_app_app} are stated under the implicit
assumption that $\Pbb$ is the Lebesgue measure on $D=[0,1]^d$.
If one wishes to treat a general probability measure $\Pbb$, then one has to resort
to the discrepancy and the Hardy-Krause variation defined with respect to
$\Pbb$. This is a different notion, and finiteness for polynomials does not
automatically follow from Lemma~\ref{lem:poly_BVHK}.

\begin{lemma}\label{lem:coeff_rate_multivariate_app}
Let \(\{\pi_{\bs\alpha_i}\}_{i=1}^{|\Lambda_k|}\) and 
\(\{\pi_{\bs\alpha_i}^{(N)}\}_{i=1}^{|\Lambda_k|}\) be the monic orthogonal 
polynomials obtained by orthogonalizing the ordered monomials 
\(\{\bs x^{\bs\alpha_i}\}_{i=1}^{|\Lambda_k|}\) with respect to 
the \((\cdot,\cdot)_\Pbb\)-inner product and 
the \((\cdot,\cdot)_{\widehat{\Pbb}_N}\)-inner product, 
respectively, i.e.,
\[
\pi_{\bs\alpha_i}(\bs x)
=
\bs x^{\bs\alpha_i}
+\sum_{j=1}^{i-1} \ell_{i,j}\bs x^{\bs\alpha_j},
\quad
(\pi_{\bs\alpha_i},\bs x^{\bs\alpha_r})_\Pbb=0,
\quad r=1,\ldots,i-1,
\]
and similarly
\[
\pi_{\bs\alpha_i}^{(N)}(\bs x)
=
\bs x^{\bs\alpha_i}
+\sum_{j=1}^{i-1} \ell_{i,j}^{(N)}\bs x^{\bs\alpha_j},
\quad
(\pi_{\bs\alpha_i}^{(N)},\bs x^{\bs\alpha_r})_{\widehat{\Pbb}_N}=0,
\quad r=1,\ldots,i-1.
\]

Assume that the set \(X_N\subset[0,1]^d\) satisfies
\begin{equation}\label{eq:disc_to_zero_assump}
\mathrm{disc}_N^*(X_N)\xrightarrow{N\to\infty} 0.
\end{equation}
Then, for every \(i\in\{1,\ldots,|\Lambda_k|\}\), 
there exist constants \(C_i>0\), \(C_i'>0\) independent of \(N\), 
and \(N_i\in\Nbb_0\) such that for all \(N\ge N_i\),
\begin{equation}\label{eq:coeff_rate_vec_app}
\left(
\sum_{j=1}^{i-1}|\ell_{i,j}^{(N)}-\ell_{i,j}|^2
\right)^{1/2}
\le
C_i\mathrm{disc}_N^*(X_N),
\end{equation}
and consequently
\begin{equation}\label{eq:sup_rate_monic_app_app}
\|\pi_{\bs\alpha_i}^{(N)}-\pi_{\bs\alpha_i}\|_\infty
\le
C_i'\mathrm{disc}_N^*(X_N).
\end{equation}
\end{lemma}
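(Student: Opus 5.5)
We follow the structure of the proof of Proposition~\ref{lem:ort_pol_conv_multi} and replace the qualitative limits by the Koksma--Hlawka inequality~\eqref{eq:KH_app}. Fix \(i\) and recall the two linear systems
\[
\bs G_{i-1}\bs\ell_i=-\bs g_i,\qquad \bs G_{i-1}^{(N)}\bs\ell_i^{(N)}=-\bs g_i^{(N)}.
\]
Every entry of \(\bs G_{i-1}^{(N)}\) and \(\bs g_i^{(N)}\) has the form \(\frac1N\sum_{n}\bs x_n^{\bs\gamma}\), where \(\bs\gamma=\bs\alpha_r+\bs\alpha_s\) ranges over finitely many multi-indices. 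By Lemma~\ref{lem:poly_BVHK} (or directly via Lemma~\ref{lem:smooth_BVHK}), each monomial \(\bs x^{\bs\gamma}\) has finite Hardy--Krause variation. Set \(V_i\) to be the maximum of these finitely many variations. Then~\eqref{eq:KH_app}, applied entrywise, gives
\[
\|\bs G_{i-1}^{(N)}-\bs G_{i-1}\|_2\le (i-1)V_i\,\mathrm{disc}_N^*(X_N),\qquad \|\bs g_i^{(N)}-\bs g_i\|_2\le \sqrt{i-1}\,V_i\,\mathrm{disc}_N^*(X_N),
\]
where we bound the spectral norm by the Frobenius norm.

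The key step is a quantitative perturbation bound for the inverse. By Remark~\ref{Rem:positive_emp_gr}, \(\bs G_{i-1}\) is positive definite with smallest eigenvalue \(\lambda_{i}>0\). Assumption~\eqref{eq:disc_to_zero_assump} lets us choose \(N_i\) such that \((i-1)V_i\,\mathrm{disc}_N^*(X_N)\le \lambda_i/2\) for all \(N\ge N_i\). Weyl's inequality then shows that the smallest eigenvalue of the symmetric matrix \(\bs G_{i-1}^{(N)}\) is at least \(\lambda_i/2\). In particular, \(\bs G_{i-1}^{(N)}\) is invertible with \(\|(\bs G_{i-1}^{(N)})^{-1}\|_2\le 2/\lambda_i\), so for \(N\ge N_i\) the deterministic setting needs no almost-sure invertibility assumption. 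We then use the identity
\[
\bs\ell_i^{(N)}-\bs\ell_i=-(\bs G_{i-1}^{(N)})^{-1}\Big[(\bs g_i^{(N)}-\bs g_i)+(\bs G_{i-1}^{(N)}-\bs G_{i-1})\bs\ell_i\Big],
\]
which follows from subtracting the two systems. It yields~\eqref{eq:coeff_rate_vec_app} with
\[
C_i=\frac{2V_i}{\lambda_i}\big(\sqrt{i-1}+(i-1)\|\bs\ell_i\|_2\big),
\]
and this constant is independent of \(N\).

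For~\eqref{eq:sup_rate_monic_app_app} we argue as at the end of the proof of Proposition~\ref{lem:ort_pol_conv_multi}. On \([0,1]^d\) every monomial is bounded by \(1\), so
\[
\|\pi_{\bs\alpha_i}^{(N)}-\pi_{\bs\alpha_i}\|_\infty\le\sum_{j<i}|\ell_{i,j}^{(N)}-\ell_{i,j}|\le\sqrt{i-1}\,\|\bs\ell_i^{(N)}-\bs\ell_i\|_2
\]
by Cauchy--Schwarz. Hence \(C_i'=\sqrt{i-1}\,C_i\) works.

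The main obstacle is the uniform control of \((\bs G_{i-1}^{(N)})^{-1}\). Continuity of inversion alone gives no constant, so we need the eigenvalue argument above. This argument is also the reason the statement only holds for \(N\ge N_i\) and why~\eqref{eq:disc_to_zero_assump} is required.
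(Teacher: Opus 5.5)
Your proof is correct and is essentially the paper's argument in matrix form. The paper tests the error equation with $w=\sum_j c_j\bs x^{\bs\alpha_j}$, $\bs c=\bs\ell_i^{(N)}-\bs\ell_i$, which is exactly the identity $\bs c^\intercal\bs G_{i-1}^{(N)}\bs c=-\bs c^\intercal\bigl[(\bs g_i^{(N)}-\bs g_i)+(\bs G_{i-1}^{(N)}-\bs G_{i-1})\bs\ell_i\bigr]$. It then bounds the left side below by $\tfrac{\lambda_i}{2}\|\bs c\|_2^2$ via Koksma--Hlawka applied to $w^2$ (the quadratic-form version of your Weyl step), and the right side above by $B_i\|\bs c\|_2\,\mathrm{disc}_N^*(X_N)$ (your entrywise bounds). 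Your version adds an explicit constant and makes explicit that $\bs G_{i-1}^{(N)}$ is deterministically invertible for $N\ge N_i$.
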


\begin{proof}
Fix \(i\) and define
\(
\mathcal V_i\isdef \spn\{\bs x^{\bs\alpha_1},\ldots,\bs x^{\bs\alpha_{i-1}}\}.
\)
Write
\[
\pi_{\bs\alpha_i}=\bs x^{\bs\alpha_i}+q_i,
\quad
\pi_{\bs\alpha_i}^{(N)}=\bs x^{\bs\alpha_i}+q_i^{(N)},
\]
with \(q_i,q_i^{(N)}\in\mathcal V_i\), and set
\(
w\isdef q_i^{(N)}-q_i\in\mathcal V_i.
\)

The orthogonality relations for \(\pi_{\bs\alpha_i}\) and 
\(\pi_{\bs\alpha_i}^{(N)}\) can equivalently be written as
\begin{equation}\label{eq:var_mu_app_app}
(\bs x^{\bs\alpha_i}+q_i,v)_\Pbb=0
\quad\text{for all } v\in\mathcal V_i,
\end{equation}
and
\begin{equation}\label{eq:var_muN_app_app}
(\bs x^{\bs\alpha_i}+q_i^{(N)},v)_{\widehat{\Pbb}_N}=0
\quad \text{for all } v\in\mathcal V_i.
\end{equation}
Since \(q_i^{(N)}=q_i+w\), the second identity becomes
\[
(\bs x^{\bs\alpha_i}+q_i+w,v)_{\widehat{\Pbb}_N}=0
\quad \text{for all } v\in\mathcal V_i,
\]
hence
\begin{equation}\label{eq:w_identity_step1}
(w,v)_{\widehat{\Pbb}_N}
=
-(\bs x^{\bs\alpha_i}+q_i,v)_{\widehat{\Pbb}_N}
\quad\text{for all } v\in\mathcal V_i.
\end{equation}
Using also~\eqref{eq:var_mu_app_app}, we obtain
\[
(w,v)_{\widehat{\Pbb}_N}
=
(\bs x^{\bs\alpha_i}+q_i,v)_\Pbb
-
(\bs x^{\bs\alpha_i}+q_i,v)_{\widehat{\Pbb}_N}
\quad \text{for all } v\in\mathcal V_i.
\]
Choosing \(v=w\) yields the identity
\begin{equation}\label{eq:energy_identity_app_app}
\|w\|_{\widehat{\Pbb}_N}^2
=
(\bs x^{\bs\alpha_i}+q_i,w)_\Pbb -
(\bs x^{\bs\alpha_i}+q_i,w)_{\widehat{\Pbb}_N}.
\end{equation}

We now estimate the left-hand side from below. Write
\[
w=\sum_{j=1}^{i-1} c_j\bs x^{\bs\alpha_j},
\quad
\bs c=[c_j]_{j=1}^{i-1}.
\]
Since the Gram matrix \(\bs G_{i-1}
      =[(\bs x^{{\bs\alpha}_j},\bs x^{{\bs\alpha}_r})_\Pbb]_{j,r=1}^{i-1}\)
      is positive definite, 
see Remark~\ref{Rem:positive_emp_gr}, there exists \(\lambda_i>0\), 
depending only on \(\bs G_{i-1}\), such that
\begin{equation}\label{eq:coercivity_app_app}
\|w\|_\Pbb^2=\bs c^\intercal \bs G_{i-1} \bs c
\ge\lambda_i\|\bs c\|_2^2.
\end{equation}

Next apply the Koksma-Hlawka inequality~\eqref{eq:KH_app} to \(w^2\). This gives
\[
\big|\|w\|_{\widehat{\Pbb}_N}^2-\|w\|_\Pbb^2\big|
=
\bigg|\frac{1}{N}\sum_{n=1}^N w^2(\bs x_n)-\int_D w^2(\bs x)\d\bs x\bigg|
\le
\mathrm{disc}_N^*(X_N) V_{\mathrm{HK}}(w^2).
\]
By Lemma~\ref{lem:poly_BVHK}, the polynomial \(w^2\) is of 
bounded Hardy-Krause variation. Moreover,
\[
w^2=\sum_{r,s=1}^{i-1} c_r c_s \bs x^{\bs\alpha_r+\bs\alpha_s},
\]
and therefore
\[
V_{\mathrm{HK}}(w^2)
\le
\sum_{r,s=1}^{i-1}|c_r||c_s| V_{\mathrm{HK}}(\bs x^{\bs\alpha_r+\bs\alpha_s})
\le
K_i\|\bs c\|_2^2,
\]
for some constant \(K_i>0\) depending only on the finite family
\(\{V_{\mathrm{HK}}(\bs x^{\bs\alpha_r+\bs\alpha_s})\}_{r,s\le i-1}\).
Combining this with~\eqref{eq:coercivity_app_app}, we obtain
\[
\|w\|_{\widehat{\Pbb}_N}^2
\ge
\bigl(\lambda_i-K_i\mathrm{disc}_N^*(X_N)\bigr)\|\bs c\|_2^2.
\]
By assumption~\eqref{eq:disc_to_zero_assump}, we may choose \(N_i\) such that
\[
\mathrm{disc}_N^*(X_N)\le \frac{\lambda_i}{2K_i}
\quad\text{for all }N\ge N_i.
\]
Hence, for all \(N\ge N_i\),
\begin{equation}\label{eq:coercive_muN_app_app}
\|w\|_{\widehat{\Pbb}_N}^2
\ge\frac{\lambda_i}{2}\|\bs c\|_2^2.
\end{equation}

We next estimate the right-hand side of~\eqref{eq:energy_identity_app_app}. 
Write \(
q_i=\sum_{j=1}^{i-1} \ell_{i,j}\bs x^{\bs\alpha_j}.
\)
Then \((\bs x^{\bs\alpha_i}+q_i)w\) is a linear combination of monomials of 
the form \(\bs x^{\bs\alpha_i+\bs\alpha_r}\) and 
\(\bs x^{\bs\alpha_j+\bs\alpha_r}\), with \(r,j\le i-1\). 
By Lemma~\ref{lem:poly_BVHK}, all these monomials are bounded 
in Hardy-Krause sense, so
\[
V_{\mathrm{HK}}\bigl((\bs x^{\bs\alpha_i}+q_i)w\bigr)
\le
\sum_{r=1}^{i-1}|c_r| V_{\mathrm{HK}}(\bs x^{\bs\alpha_i+\bs\alpha_r})
+
\sum_{j=1}^{i-1}\sum_{r=1}^{i-1}|\ell_{i,j}||c_r|
V_{\mathrm{HK}}(\bs x^{\bs\alpha_j+\bs\alpha_r})
\le
B_i\|\bs c\|_2,
\]
for some constant \(B_i>0\) depending on \(i\), 
the finite family of Hardy-Krause variations, and the fixed 
coefficients \(\ell_{i,j}\).
	
Applying the Koksma-Hlawka inequality to 
\((\bs x^{\bs\alpha_i}+q_i)w\) and using~\eqref{eq:energy_identity_app_app}, 
we obtain
\begin{equation}\label{eq:upper_energy_app_app}
\|w\|_{\widehat{\Pbb}_N}^2
\le
\mathrm{disc}_N^*(X_N)B_i\|\bs c\|_2.
\end{equation}
	
Finally, for \(N\ge N_i\), combining~\eqref{eq:coercive_muN_app_app} 
and~\eqref{eq:upper_energy_app_app} gives
\[
\frac{\lambda_i}{2}\|\bs c\|_2^2
\le
\mathrm{disc}_N^*(X_N)B_i\|\bs c\|_2.
\]	
If \(\|\bs c\|_2=0\), there is nothing to prove. Otherwise, 
dividing by \(\|\bs c\|_2\) yields
\[
\|\bs c\|_2
\le
\frac{2B_i}{\lambda_i}\mathrm{disc}_N^*(X_N).
\]
Since \(c_j=\ell_{i,j}^{(N)}-\ell_{i,j}\), this 
proves~\eqref{eq:coeff_rate_vec_app} with
\[
C_i=\frac{2B_i}{\lambda_i}.
\]
	
To conclude, note that \(|\bs x^{\bs\alpha_j}|\le 1\) on \(D=[0,1]^d\). 
Hence, there holds
\[
\|\pi_{\bs\alpha_i}^{(N)}-\pi_{\bs\alpha_i}\|_\infty
=
\|w\|_\infty
\le
\sum_{j=1}^{i-1}|\ell_{i,j}^{(N)}-\ell_{i,j}|\le
\sqrt{i-1}\|\bs c\|_2.
\]
Together with the previous bound, 
this yields~\eqref{eq:sup_rate_monic_app_app} with
\[
C_i'=\sqrt{i-1}C_i.
\]
\end{proof}

\begin{corollary}\label{cor:rate_orthonormal_app_app}
Let \(\{\pi_{\bs\alpha_i}\}_{i=1}^{|\Lambda_k|}\) and 
\(\{\pi_{\bs\alpha_i}^{(N)}\}_{i=1}^{|\Lambda_k|}\) be the monic orthogonal 
polynomials obtained by orthogonalizing the ordered monomials 
\(\{\bs x^{\bs\alpha_i}\}_{i=1}^{|\Lambda_k|}\) with respect to 
\((\cdot,\cdot)_\Pbb\) and \((\cdot,\cdot)_{\widehat{\Pbb}_N}\). Further, let
\[
\widehat\pi_{\bs\alpha_i}\isdef \frac{\pi_{\bs\alpha_i}}{\|\pi_{\bs\alpha_i}\|_\Pbb},
\quad
\widehat\pi_{\bs\alpha_i}^{(N)}\isdef
\frac{\pi_{\bs\alpha_i}^{(N)}}{\|\pi_{\bs\alpha_i}^{(N)}\|_{\widehat{\Pbb}_N}}.
\]
Then there exist constants \(\widehat C_i>0\) and \(\widehat N_i\in\Nbb_0\) 
such that for all \(N\ge \widehat N_i\),
\[
\|\widehat\pi_{\bs\alpha_i}^{(N)}-\widehat\pi_{\bs\alpha_i}\|_\infty
\le
\widehat C_i\mathrm{disc}_N^*(X_N).
\]
\end{corollary}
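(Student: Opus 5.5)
The plan is to reduce the statement to Lemma~\ref{lem:coeff_rate_multivariate_app} and to treat the normalization as a quotient perturbation. Put \(a_N\isdef\|\pi_{\bs\alpha_i}^{(N)}\|_{\widehat{\Pbb}_N}\) and \(a\isdef\|\pi_{\bs\alpha_i}\|_\Pbb\); note \(a>0\) by Remark~\ref{Rem:positive_emp_gr}. We split
\[
\widehat\pi_{\bs\alpha_i}^{(N)}-\widehat\pi_{\bs\alpha_i}
=\frac{\pi_{\bs\alpha_i}^{(N)}-\pi_{\bs\alpha_i}}{a_N}
+\pi_{\bs\alpha_i}\Big(\frac{1}{a_N}-\frac{1}{a}\Big).
\]
In sup norm, the first term is at most \(C_i'\mathrm{disc}_N^*(X_N)/a_N\) by \eqref{eq:sup_rate_monic_app_app}. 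The second is at most \(\|\pi_{\bs\alpha_i}\|_\infty|a-a_N|/(a\,a_N)\), where \(\|\pi_{\bs\alpha_i}\|_\infty\le 1+\sum_j|\ell_{i,j}|\) is a fixed constant.

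It therefore remains to show \(|a_N^2-a^2|\le K\,\mathrm{disc}_N^*(X_N)\) for large \(N\). Since \(a_N+a\ge a\), this gives \(|a_N-a|\le (K/a)\,\mathrm{disc}_N^*(X_N)\). We split
\[
a_N^2-a^2=\big(\|\pi_{\bs\alpha_i}^{(N)}\|_{\widehat{\Pbb}_N}^2-\|\pi_{\bs\alpha_i}\|_{\widehat{\Pbb}_N}^2\big)+\big(\|\pi_{\bs\alpha_i}\|_{\widehat{\Pbb}_N}^2-\|\pi_{\bs\alpha_i}\|_{\Pbb}^2\big).
\]
\begin{itemize}
\item The second bracket is bounded by \(\mathrm{disc}_N^*(X_N)V_{\mathrm{HK}}(\pi_{\bs\alpha_i}^2)\) via the Koksma--Hlawka inequality~\eqref{eq:KH_app}. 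The variation is finite by Lemma~\ref{lem:poly_BVHK}, and \(\pi_{\bs\alpha_i}\) does not depend on \(N\).
\item The first bracket equals \((\pi_{\bs\alpha_i}^{(N)}-\pi_{\bs\alpha_i},\pi_{\bs\alpha_i}^{(N)}+\pi_{\bs\alpha_i})_{\widehat{\Pbb}_N}\), since \(\widehat{\Pbb}_N\) is a probability measure. It is thus bounded by \(\|w\|_\infty(\|\pi_{\bs\alpha_i}^{(N)}\|_\infty+\|\pi_{\bs\alpha_i}\|_\infty)\). Here \(\|w\|_\infty\le C_i'\mathrm{disc}_N^*(X_N)\), and \(\|\pi_{\bs\alpha_i}^{(N)}\|_\infty\le\|\pi_{\bs\alpha_i}\|_\infty+C_i'\mathrm{disc}_N^*(X_N)\), which is bounded uniformly once \(\mathrm{disc}_N^*(X_N)\le1\).
\end{itemize}

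The main obstacle is keeping \(a_N\) away from zero uniformly, so that the constant is independent of \(N\). The previous step handles this. Choose \(\widehat N_i\ge N_i\) such that, using \eqref{eq:disc_to_zero_assump}, \(K\mathrm{disc}_N^*(X_N)\le a^2/2\) for \(N\ge\widehat N_i\). Then \(a_N^2\ge a^2/2\), so \(a_N\ge a/\sqrt2\).

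Collecting the bounds yields
\[
\widehat C_i=\frac{\sqrt2\,C_i'}{a}+\frac{2\|\pi_{\bs\alpha_i}\|_\infty K}{a^3}.
\]
This constant is independent of \(N\), which proves the claim.
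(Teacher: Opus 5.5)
Your proposal is correct and follows essentially the same route as the paper's proof. Both split off the monic difference from the normalization term and estimate $\bigl|\|\pi_{\bs\alpha_i}^{(N)}\|_{\widehat{\Pbb}_N}^2-\|\pi_{\bs\alpha_i}\|_\Pbb^2\bigr|$ by a sup-norm bound plus Koksma--Hlawka applied to $\pi_{\bs\alpha_i}^2$; both then use $a_N\ge a/\sqrt2$ to control the reciprocals, and your other differences (bounding $\|\pi_{\bs\alpha_i}^{(N)}\|_\infty$ via $\mathrm{disc}_N^*(X_N)\le 1$ rather than by $2\|\pi_{\bs\alpha_i}\|_\infty$, and giving an explicit constant) are cosmetic.
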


 \begin{proof}
 	We start from the identity
 	\begin{equation}\label{eq:split_orthonorm_clean}
 		\widehat\pi_{\bs\alpha_i}^{(N)}-\widehat\pi_{\bs\alpha_i}
 		=
 		\frac{\pi_{\bs\alpha_i}^{(N)}-\pi_{\bs\alpha_i}}
 		{\|\pi_{\bs\alpha_i}^{(N)}\|_{\widehat{\Pbb}_N}}
 		+
 		\pi_{\bs\alpha_i}
 		\left(
 		\frac1{\|\pi_{\bs\alpha_i}^{(N)}\|_{\widehat{\Pbb}_N}}
 		-
 		\frac1{\|\pi_{\bs\alpha_i}\|_\Pbb}
 		\right).
 	\end{equation}
 	Thus it remains to estimate the difference of the monic polynomials 
  and the difference of the reciprocal normalization factors.
 	
 	By Lemma~\ref{lem:coeff_rate_multivariate_app}, there exist \(C_i>0\) 
  and \(N_i\in\Nbb_0\) such that
 	\begin{equation}\label{eq:monic_sup_rate_clean}
 		\|\pi_{\bs\alpha_i}^{(N)}-\pi_{\bs\alpha_i}\|_\infty
 		\le
 		C_i\mathrm{disc}_N^*(X_N),
 		\quad N\ge N_i.
 	\end{equation}
 	
 	Next, we compare the squared norms
 	\begin{align}
 		\bigl|
 		\|\pi_{\bs\alpha_i}^{(N)}\|_{\widehat{\Pbb}_N}^2
 		-
 		\|\pi_{\bs\alpha_i}\|_\Pbb^2
 		\bigr|
 		&=
 		\left|
 		\int_D (\pi_{\bs\alpha_i}^{(N)})^2\d\widehat{\Pbb}_N
 		-
 		\int_D \pi_{\bs\alpha_i}^2\d\Pbb
 		\right| \nonumber\\
 		&\le
 		\left|
 		\int_D\bigl((\pi_{\bs\alpha_i}^{(N)})^2-\pi_{\bs\alpha_i}^2\bigr)
    \d\widehat{\Pbb}_N
 		\right|
 		+
 		\left|
 		\int_D \pi_{\bs\alpha_i}^2\d\widehat{\Pbb}_N
 		-
 		\int_D \pi_{\bs\alpha_i}^2\d\Pbb
 		\right|.
 		\label{eq:norm_split_explicit}
 	\end{align}
 	Since \(\widehat{\Pbb}_N\) is a probability measure,
 	\[
 	\left|
 	\int_D\bigl((\pi_{\bs\alpha_i}^{(N)})^2-\pi_{\bs\alpha_i}^2\bigr)
  \d\widehat{\Pbb}_N
 	\right|
 	\le
 	\|(\pi_{\bs\alpha_i}^{(N)})^2-\pi_{\bs\alpha_i}^2\|_\infty
 	\le
 	\bigl(\|\pi_{\bs\alpha_i}^{(N)}\|_\infty+\|\pi_{\bs\alpha_i}\|_\infty\bigr)
 	\|\pi_{\bs\alpha_i}^{(N)}-\pi_{\bs\alpha_i}\|_\infty.
 	\]
 	Moreover, by the triangle inequality, we have
 	\[
 	\|\pi_{\bs\alpha_i}^{(N)}\|_\infty
 	\le
 	\|\pi_{\bs\alpha_i}\|_\infty
 	+
 	\|\pi_{\bs\alpha_i}^{(N)}-\pi_{\bs\alpha_i}\|_\infty.
 	\]
 	Hence, by~\eqref{eq:disc_to_zero_assump} 
  and~\eqref{eq:monic_sup_rate_clean}, after possibly increasing \(N_i\), 
  we may assume that
 	\[
 	\|\pi_{\bs\alpha_i}^{(N)}\|_\infty \le 2\|\pi_{\bs\alpha_i}\|_\infty,
 	\quad N\ge N_i.
 	\]
 	Therefore
 	\begin{equation}\label{eq:first_norm_term_bound}
 		\left|
 		\int_D\bigl((\pi_{\bs\alpha_i}^{(N)})^2-\pi_{\bs\alpha_i}^2\bigr)
    \d\widehat{\Pbb}_N
 		\right|
 		\le
 		3\|\pi_{\bs\alpha_i}\|_\infty C_i\mathrm{disc}_N^*(X_N),
 		\quad N\ge N_i.
 	\end{equation}
 	
 	For the second term of~\eqref{eq:norm_split_explicit}, we apply the 
  Koksma-Hlawka inequality~\eqref{eq:KH_app} to the polynomial 
  \(\pi_{\bs\alpha_i}^2\), which gives
 	\begin{equation}\label{eq:second_norm_term_bound}
 		\left|
 		\int_D \pi_{\bs\alpha_i}^2\d\widehat{\Pbb}_N
 		-
 		\int_D \pi_{\bs\alpha_i}^2\d\Pbb
 		\right|
 		\le
 		\mathrm{disc}_N^*(X_N)V_{\mathrm{HK}}(\pi_{\bs\alpha_i}^2).
 	\end{equation}
 	Since \(\pi_{\bs\alpha_i}^2\) is a polynomial on \([0,1]^d\), 
  we have \(V_{\mathrm{HK}}(\pi_{\bs\alpha_i}^2)<\infty\). 
  Combining~\eqref{eq:norm_split_explicit},~\eqref{eq:first_norm_term_bound}
  and~\eqref{eq:second_norm_term_bound}, we obtain
 	\begin{equation}\label{eq:normsq_rate_explicit}
 		\bigl|
 		\|\pi_{\bs\alpha_i}^{(N)}\|_{\widehat{\Pbb}_N}^2
 		-
 		\|\pi_{\bs\alpha_i}\|_\Pbb^2
 		\bigr|
 		\le
 		\widehat{C}\mathrm{disc}_N^*(X_N),
 		\quad N\ge N_i,
 	\end{equation}
 	where
 	\[
 	\widehat{C}
 	\isdef
 	3\|\pi_{\bs\alpha_i}\|_\infty C_i + V_{\mathrm{HK}}(\pi_{\bs\alpha_i}^2).
 	\]
 	
 	Since \(\|\pi_{\bs\alpha_i}\|_\Pbb>0\) and \(\mathrm{disc}_N^*(X_N)\to0\), 
  there exists \(N'\ge N_i\) such that
 	\[
 	\widehat{C}\mathrm{disc}_N^*(X_N)\le \frac12 \|\pi_{\bs\alpha_i}\|_\Pbb^2,
 	\quad N\ge N'.
 	\]
 	Then~\eqref{eq:normsq_rate_explicit} implies
 	\[
 	\|\pi_{\bs\alpha_i}^{(N)}\|_{\widehat{\Pbb}_N}^2
 	\ge
 	\|\pi_{\bs\alpha_i}\|_\Pbb^2
 	-
 	\widehat{C}\mathrm{disc}_N^*(X_N)
 	\ge
 	\frac12 \|\pi_{\bs\alpha_i}\|_\Pbb^2,
 	\quad N\ge N',
 	\]
 	and hence
 	\begin{equation}\label{eq:denom_lower_explicit}
 		\|\pi_{\bs\alpha_i}^{(N)}\|_{\widehat{\Pbb}_N}
 		\ge
 		\frac{1}{\sqrt2}\|\pi_{\bs\alpha_i}\|_\Pbb,
 		\quad N\ge N'.
 	\end{equation}
 	
 	It remains to control the difference of the reciprocal normalization factors. 
  We start from the identity
 	\[
 	\bigl|
 	\|\pi_{\bs\alpha_i}^{(N)}\|_{\widehat{\Pbb}_N}
 	-
 	\|\pi_{\bs\alpha_i}\|_\Pbb
 	\bigr|
 	=
 	\frac{
 		\bigl|
 		\|\pi_{\bs\alpha_i}^{(N)}\|_{\widehat{\Pbb}_N}^2
 		-
 		\|\pi_{\bs\alpha_i}\|_\Pbb^2
 		\bigr|
 	}{
 		\|\pi_{\bs\alpha_i}^{(N)}\|_{\widehat{\Pbb}_N}
 		+
 		\|\pi_{\bs\alpha_i}\|_\Pbb
 	}.
 	\]
 	Hence, by~\eqref{eq:normsq_rate_explicit} 
  and~\eqref{eq:denom_lower_explicit}, there exists a constant \(\widehat{C}'>0\) 
  such that
 	\begin{equation}\label{eq:norm_difference_rate_explicit}
 		\bigl|
 		\|\pi_{\bs\alpha_i}^{(N)}\|_{\widehat{\Pbb}_N}
 		-
 		\|\pi_{\bs\alpha_i}\|_\Pbb
 		\bigr|
 		\le
 		\widehat{C}'\mathrm{disc}_N^*(X_N),
 		\quad N\ge N'.
 	\end{equation}
 	Therefore
 	\begin{equation}\label{eq:recip_rate_explicit}
 		\left|
 		\frac1{\|\pi_{\bs\alpha_i}^{(N)}\|_{\widehat{\Pbb}_N}}
 		-
 		\frac1{\|\pi_{\bs\alpha_i}\|_\Pbb}
 		\right|
 		=
 		\frac{
 			\bigl|
 			\|\pi_{\bs\alpha_i}^{(N)}\|_{\widehat{\Pbb}_N}
 			-
 			\|\pi_{\bs\alpha_i}\|_\Pbb
 			\bigr|
 		}{
 			\|\pi_{\bs\alpha_i}^{(N)}\|_{\widehat{\Pbb}_N}\|\pi_{\bs\alpha_i}\|_\Pbb
 		}
 		\le
 		\widehat{C}''\mathrm{disc}_N^*(X_N),
 		\quad N\ge N',
 	\end{equation}
 	for a suitable constant \(\widehat{C}''>0\).
 	
 	Finally, taking \(L^\infty\)-norms in~\eqref{eq:split_orthonorm_clean} 
  and using~\eqref{eq:monic_sup_rate_clean},~\eqref{eq:denom_lower_explicit}
  and~\eqref{eq:recip_rate_explicit}, we obtain for \(N\ge N'\),
 	\[
 	\|\widehat\pi_{\bs\alpha_i}^{(N)}-\widehat\pi_{\bs\alpha_i}\|_\infty
 	\le
 	\frac{\|\pi_{\bs\alpha_i}^{(N)}-\pi_{\bs\alpha_i}\|_\infty}
 	{\|\pi_{\bs\alpha_i}^{(N)}\|_{\widehat{\Pbb}_N}}
 	+
 	\|\pi_{\bs\alpha_i}\|_\infty
 	\left|
 	\frac1{\|\pi_{\bs\alpha_i}^{(N)}\|_{\widehat{\Pbb}_N}}
 	-
 	\frac1{\|\pi_{\bs\alpha_i}\|_\Pbb}
 	\right|
 	\le
 	\widehat C_i\mathrm{disc}_N^*(X_N),
 	\]
 	for a suitable constant \(\widehat C_i>0\). Setting \(\widehat N_i\isdef N'\) 
  completes the proof.
 \end{proof}

\end{document}